\newtheorem{thm}{Theorem}[section]
 \newtheorem{lem}[thm]{Lemma}
 \newtheorem{prop}[thm]{Proposition}
 \theoremstyle{definition}
 \newtheorem{df}[thm]{Definition}
 \newtheorem{ex}[thm]{Example}
 \theoremstyle{remark}
 \newtheorem{rem}[thm]{Remark}
 \numberwithin{equation}{section}
\def\be#1 {\begin{equation} \label{#1}}
\newcommand{\ee}{\end{equation}}
\def\sqw{\hbox{\rlap{\leavevmode\raise.3ex\hbox{$\sqcap$}}$%
\sqcup$}}
\def\findem{\ifmmode\sqw\else{\ifhmode\unskip\fi\nobreak\hfil
\penalty50\hskip1em\null\nobreak\hfil\sqw
\parfillskip=0pt\finalhyphendemerits=0\endgraf}\fi}
\newcommand{\R}{\mathbb R}
\newcommand{\N}{\mathbb N}
\newcommand{\C}{\mathbb C}
\newcommand{\T}{{\mathbb T}}
\newcommand\<{\langle}
\renewcommand\>{\rangle}
\renewcommand{\Re}{\operatorname{Re}}
\renewcommand{\Im}{\operatorname{Im}}
\renewcommand{\epsilon}{\varepsilon}
\newcommand{\mbf}{\mathbf}
\DeclareMathOperator{\dist}{dist}
\newcommand{\bg}{g}
\newcommand{\bz}{z}
\newcommand{\bff}{f}
\newcommand{\Cont}{C}
\newcommand{\Test}{\Cont^\infty_c}
\newcommand{\cC}{\mathcal C}
\newcommand{\cS}{\mathcal S}
\newcommand{\cT}{\mathcal T}
\newcommand{\bT}{\mbf T}
\newcommand{\bPi}{\mbf \Pi}
\newcommand{\bbo}{\mathbbm 1}
\DeclareMathOperator{\sech}{sech}
\newcommand{\normm}[1]{{\vert\kern-0.25ex\vert\kern-0.25ex\vert #1 \vert\kern-0.25ex\vert\kern-0.25ex\vert}}
\newcommand{\TN}[2]{{\vert\kern-0.25ex\vert\kern-0.25ex\vert #1 \vert\kern-0.25ex\vert\kern-0.25ex\vert}_{#2}}
\newcommand{\sbrack}[1]{^{(#1)}}
\newcommand{\qtq}[1]{\quad\text{#1}\quad}
\newcommand{\bB}{b}
\newcommand{\BMO}{\mathrm{BMO}}
\newcommand{\LHS}[1]{\mathrm{LHS}\eqref{#1}}
\newcommand{\RHS}[1]{\mathrm{RHS}\eqref{#1}}
\title[A quasilinear Schr\"odinger equation with degenerate dispersion]{Local well-posedness for a quasilinear Schr\"odinger equation with degenerate dispersion}
\author{Benjamin Harrop-Griffiths}
\email{harropgriffiths@math.ucla.edu}
\address{Department of Mathematics, UCLA\\
520 Portola Plaza, Los Angeles, CA 90095, USA}
\author{Jeremy L.~Marzuola}
\email{marzuola@math.unc.edu}
\address{Mathematics Department, University of North Carolina \\
Phillips Hall, Chapel Hill, NC 27599, USA}
\thanks{J.L.M. was supported in part by U.S. NSF Grants DMS--1312874 and DMS--1352353.}
\begin{document}

\maketitle

\begin{abstract}
We consider a quasilinear Schr\"odinger equation on $\R$ for which the dispersive effects degenerate when the solution vanishes. We first prove local well-posedness for sufficiently smooth, spatially localized, degenerate initial data. As a corollary in the focusing case we obtain a short time stability result for the energy-minimizing compact breather.
\end{abstract}

\section{Introduction}

We consider solutions \(u\colon \R_t\times \R_x\rightarrow \C\) of the quasilinear Schr\"odinger equation
\begin{equation}\label{QLS}\tag{QLS}
\begin{cases}
iu_t = \bar u(uu_x)_x + \mu |u|^2 u,\smallskip\\
u(0,x) = u_0(x),
\end{cases}
\end{equation}
where \(\mu \in\{-1,0,1\}\). Our interest in the model \eqref{QLS} originated with the article~\cite{MR2651381}, where the authors reduce the study of norm growth for the defocusing NLS on \(\T^2\) to a discrete toy model. The equation \eqref{QLS} (with \(\mu = 1\)) then arises as a formal continuum limit of this toy model (see~\cite{MR3171091}).

The equation \eqref{QLS} is the Hamiltonian flow of
\[
H[u] := \int|uu_x|^2\,dx - \frac\mu 2\int |u|^4\,dx,
\]
with respect to the Poisson structure
\[
\{F,H\} := i\int \frac{\delta F}{\delta u}\frac{\delta H}{\delta \bar u} - \frac{\delta F}{\delta \bar u}\frac{\delta H}{\delta u}\,dx.
\]
Solutions of \eqref{QLS} also (formally) conserve the mass
\[
M[u] := \int |u|^2\,dx,
\]
and momentum
\[
P[u] := \Im \int u \bar u_x\,dx.
\]

In this article we are primarily interested in the local well-posedness of \eqref{QLS}. Taking \(w = d u\), the linearization of the equation \eqref{QLS} about a solution \(u\) may be written as
\[
i(w_t + vw_x) = (\rho w_x)_x + \text{lower order},
\]
where the density \(\rho\) and velocity\footnote{Strictly speaking, \(v\) is twice the momentum density. However, a peculiarity of \eqref{QLS} is that the mass \(\rho\) is transported by \(v\), which motivates us to refer to it as the velocity.} \(v\) are defined by
\begin{equation}\label{HydroVariables}
\rho := |u|^2,\qquad v := 2\Im(u\bar u_x).
\end{equation}
In particular, the linearized problem is dispersive whenever \(\rho(t,x)\gtrsim 1\). On sufficiently short time intervals we expect that \(\rho(t,x)\approx \rho(0,x) = |u_0(x)|^2\), so the dispersive nature of the problem is determined by the initial data. If the initial data \(u_0\) is non-degenerate, i.e. \(|u_0(x)|\gtrsim 1\), local well-posedness then follows from e.g.~\cite{MR2955206,MR2096797,marzuola2020quasilinear}. Unfortunately, these techniques break down when \(u_0\) is allowed to degenerate.

We will focus on the case that the initial data \(u_0\) is smooth and non-zero on the interval \(I : = (-x_0,x_0)\subset \R\) and supported on the closed interval \(\bar I = [-x_0,x_0]\), with sufficient decay at the endpoints to ensure that
\begin{equation}\label{DecayRate}
\frac1{|u_0|}\not \in L^1\big((-x_0,0)\big)\cup L^1\big((0,x_0)\big).
\end{equation}
A particular example to keep in mind is the case that \(|u_0|\) has asymptotic behavior
\begin{equation}\label{endpointasymptotics}
|u_0(x)| \sim c_\pm \dist(x,\pm x_0)^{1 + \alpha_\pm}\qtq{as} x\rightarrow(\pm x_0)^\mp,
\end{equation}
for positive constants \(c_\pm>0\) and non-negative constants \(\alpha_\pm\geq0\). We refer to the case \(\alpha_\pm = 0\) as linear endpoint decay and \(\alpha_\pm>0\) as sublinear\footnote{Since we are considering compact regions, our notion of sublinear and superlinear is reversed from behavior considered as $x \to \infty$.} endpoint decay. Heuristically, we expect linear endpoint decay to be sharp in the sense that linear and sublinear endpoint decay will lead to local well-posedness in suitable spaces, whereas superlinear (\(\alpha_\pm<0\)) endpoint decay will lead to ill-posedness in any reasonable space of distributions. These heuristics are derived by linearizing the equation \eqref{QLS} about the initial data and considering the Hamiltonian flow of the corresponding principal symbol (see the discussion in~\cite{MR4011864} for example).

While preparing this article, we learned of a complementary preprint by Jeong and Oh~\cite{SungJin_Ill} in which they prove ill-posedness in standard Sobolev spaces of a related quasilinear Schr\"odinger equation appearing in~\cite{MR1387617,MR3994381}, though the techniques apply as well to show the ill-posedness of \eqref{QLS} in standard Sobolev spaces. This result uses ideas from their previous result~\cite{2019arXiv190202025J}, in which they are able to exploit the aforementioned heuristic to rigorously prove ill-posedness of degenerate solutions of the Hall-MHD and electron-MHD equations. The same authors are also working to develop an alternative approach to local well-posedess using function spaces suited to the degeneracy of the initial data~\cite{SungJin_Well}. Ill-posedness of a related degenerate model was also considered in~\cite{MR2967120}. 

For sublinear endpoint decay one may obtain local well-posedness using polynomially weighted spaces as in~\cite{MR4011864}. Thus, our main concern in this article will be the problem of linear endpoint decay: the sharp decay rate we expect to be well-posed in any reasonable sense. Our motivation for considering data satisfying conditions of this type is due to the following result from~\cite{MR4042218}:
\begin{thm}{\bf\cite{MR4042218}}\label{thrm:GHGM}
If \(\mu = 1\) there exists a unique (up to translation) non-negative minimizer \(\phi = \phi_\omega\) of the Hamiltonian \(H[\phi]\) for fixed mass \(M[\phi] = \sqrt{2\pi}\omega >0\) given by
\begin{equation}\label{compacton}
\phi_\omega(x) := \sqrt{2\omega}\cos\left(\tfrac x{\sqrt 2}\right) \bbo_I(x),
\end{equation}
where the interval \(I = (-\frac\pi{\sqrt 2},\frac\pi{\sqrt 2})\).
\end{thm}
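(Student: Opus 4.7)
The natural first step is to exploit the substitution $\psi := \phi^2$, valid for non-negative real $\phi$, which reduces the quartic constrained minimization to a quadratic one. Since $\psi_x = 2\phi\phi_x$, one has $|\phi\phi_x|^2 = \tfrac{1}{4}\psi_x^2$ and $\phi^4 = \psi^2$, and with $\mu = 1$ the problem becomes
\[
\min\left\{ E[\psi] := \tfrac{1}{4}\int \psi_x^2\,dx - \tfrac{1}{2}\int \psi^2\,dx \;:\; \psi \in H^1(\R),\ \psi \geq 0,\ \int \psi\,dx = \sqrt{2\pi}\,\omega\right\}.
\]
That it suffices to consider non-negative real $\phi$ follows from Kato's inequality: $H[|\phi|] \leq H[\phi]$ and $M[|\phi|] = M[\phi]$, so any minimizer can be replaced by its absolute value. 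Once a minimizer $\psi$ of $E$ is identified, the square root $\phi = \sqrt{\psi}$ will be the desired minimizer of $H$, with sufficient regularity because $\psi$ turns out to vanish quadratically at the endpoints of its support.

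To produce $\psi$ I would run the direct method together with the Polya--Szego inequality: symmetric decreasing rearrangement preserves both $\int \psi$ and $\int \psi^2$ while decreasing $\int\psi_x^2$, so one may restrict attention to symmetric decreasing competitors. These functions satisfy the pointwise bound $\psi(x) \lesssim |x|^{-1/2}\|\psi\|_{L^2}$, which, together with the prescribed mass, suppresses escape-to-infinity along a minimizing sequence; weak $H^1$-compactness and weak lower semicontinuity of $\int \psi_x^2$ then yield a minimizer $\psi$, supported on some centered interval $(-L, L)$. On the interior of $\supp \psi$, the Euler--Lagrange equation, with Lagrange multiplier $\lambda$ for the mass constraint, reads $-\tfrac{1}{2}\psi_{xx} - \psi = \lambda$, whose symmetric solution is $\psi(x) = A\cos(\sqrt{2}\,x) - \lambda$. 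Continuity of $\psi \in H^1(\R)$ forces $A\cos(\sqrt{2}\,L) = \lambda$, while the free-boundary condition $\psi_x(\pm L) = 0$, obtained by taking inner variations of the support, gives $\sin(\sqrt{2}\,L) = 0$.

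The smallest $L$ for which the resulting profile is non-negative throughout $(-L, L)$ is $L = \pi/\sqrt{2}$, leading to $\lambda = -A$ and $\psi(x) = 2A\cos^2(x/\sqrt{2})$; the mass constraint then determines $A = \omega$, recovering $\phi_\omega$ as stated. Uniqueness up to translation follows because strict inequality in Polya--Szego forces any minimizer to agree with its symmetric decreasing rearrangement up to a shift. The main obstacle in carrying out this plan is the free-boundary analysis: one must show that the minimizer is in fact \emph{compactly supported} (i.e.\ $L < \infty$) and rigorously justify the transversality condition $\psi_x(\pm L) = 0$, for instance by reducing to a one-parameter family of Dirichlet problems indexed by $L$ and differentiating the resulting reduced energy $E(L)$. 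This compactness of the support is what distinguishes the \eqref{QLS} compacton from classical NLS ground states, and reflects, at the variational level, the degeneracy of the dispersion that motivates the rest of the paper.
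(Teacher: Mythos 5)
Theorem~\ref{thrm:GHGM} is quoted from \cite{MR4042218} and is not proved in the present paper, so there is no in-paper argument to compare yours against; only Proposition~\ref{prop:OldPaperConsequence}, which packages the compactness and uniqueness consequences of the cited work, is used downstream. On its own terms, your sketch follows the expected route: the substitution $\psi=\phi^2$, the Euler--Lagrange equation $-\tfrac12\psi_{xx}-\psi=\lambda$ on the support, and the endpoint matching all lead to the correct profile, and you are right to single out the transversality condition $\psi_x(\pm L)=0$ together with the finiteness of $L$ as the delicate step (the ``non-negativity'' criterion you use to pick $L=\pi/\sqrt 2$ is not quite what rules out $L=2\pi/\sqrt2$ with $A<0$; it is rather the symmetric decreasing constraint that does so).

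The genuine gap is in the existence/compactness step. For symmetric decreasing $\psi$ the bound $\psi(x)\lesssim|x|^{-1/2}\|\psi\|_{L^2}$ gives $L^2$-tightness of the minimizing sequence (since $|x|^{-1}$ is integrable at infinity) but \emph{not} $L^1$-tightness (since $|x|^{-1/2}$ is not), so it does not by itself prevent the constraint $\int\psi=\sqrt{2\pi}\,\omega$ from dropping in the weak limit. Moreover, the negative term $-\tfrac12\int\psi^2$ is only weakly \emph{upper} semicontinuous, so ``weak lower semicontinuity of $\int\psi_x^2$'' alone does not let you pass to the limit in the energy. The standard repair is: (i) establish coercivity so the minimizing sequence is $H^1$-bounded, e.g.\ via $\|\psi\|_{L^2}^2\lesssim\|\psi\|_{L^1}^{4/3}\|\psi_x\|_{L^2}^{2/3}$; (ii) observe that the $L^2$ tail bound upgrades the Rellich limit to strong $L^2$ convergence, so $E$ is lower semicontinuous along the sequence; and (iii) rule out mass loss through strict monotonicity of $m\mapsto E_{\min}(m)$, which follows from the scaling $E[\lambda\psi]=\lambda^2E[\psi]$, $M[\lambda\psi]=\lambda M[\psi]$ and the elementary fact that $E_{\min}(m)<0$ for all $m>0$: if the limit had mass $m'<m$ then $E_{\min}(m)\geq E[\psi]\geq E_{\min}(m')>E_{\min}(m)$, a contradiction. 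With these additions and a careful treatment of the free boundary, your plan is a reasonable reconstruction of the argument in \cite{MR4042218}.
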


One may construct compact breather\footnote{We adopt the terminology of~\cite[Section~9.2.2]{MR3829411} and refer to these solutions as ``breathers''. One might also refer to these solutions as ``standing waves''.} solutions of \eqref{QLS} from the minimizer \(\phi_\omega\) for any \(\theta\in \R\) by
\begin{equation}\label{compactbreather}
u(t,x) = e^{-it\omega + i\theta}\phi_\omega(x).
\end{equation}
Compact breathers and compactons (the corresponding analogue for KdV-type equations) are an important feature of (focusing) degenerate dispersive equations. There has been a substantial volume of work on the existence and properties of compact solutions of this type for a variety of problems, in particular the work of Rosenau and collaborators, e.g.~\cite{PhysRevE.89.022924,PhysRevLett.94.045503,MR3402749,ROSENAU200644,MR2159688,MR2133462,MR2601818,HR,MR1294558,PhysRevLett94174102,2017arXiv170903322Z,MR3628993,17518121499095101,PIKOVSKY200656}. We refer the reader to the primer~\cite{MR3829411} for a review of the current state of the art.  

The main result of this article is local well-posedness of the equation \eqref{QLS} in a space that contains the solution \eqref{compactbreather}:
\begin{thm}[Local well-posedness of \eqref{QLS}]\label{thrm:Main}
Let \(x_0>0\) and \(I=(-x_0,x_0)\). Then there exists a set \(S\subset L^2\) of functions that are non-zero and smooth on \(I\), supported on \(\bar I\) and satisfy \eqref{DecayRate} so that for any \(u_0\in S\) there exists a time \(T>0\) and a unique \(u\in \Cont([0,T];L^2)\) that satisfies \eqref{QLS} in the sense of distributions with \(u\in \Cont([0,T]\times \R)\) and \(u_x,(\frac12 u^2)_{xx}\in \Cont([0,T];L^\infty)\). For all \(t\in[0,T]\) the solution \(u(t)\) is non-zero and smooth on \(I\), supported on \(\bar I\), and conserves its mass, momentum and energy. Further, for all \(t\in[0,T]\) the solution map \(u_0\mapsto u(t)\) is Lipschitz continuous with respect to the \(L^2\)-topology.
\end{thm}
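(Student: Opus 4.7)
The plan is to treat \eqref{QLS} as a quasilinear Schr\"odinger equation whose principal symbol $\rho\xi^2$ vanishes quadratically at $\pm x_0$ and to compensate for this degeneracy by a change of spatial variable adapted to $u_0$. Introduce $y\colon I\to\R$ with $dy/dx = 1/|u_0(x)|$; by \eqref{DecayRate} this is a diffeomorphism of $I$ onto $\R$, and the elementary identity $\tfrac{1}{|u_0|}\partial_y(|u_0|\partial_y) = \partial_y^2 + \text{l.o.t.}$ shows that in $y$-coordinates the degenerate operator $\partial_x(\rho\,\partial_x)$ becomes comparable to the flat Laplacian $\partial_y^2$. I would accordingly define the admissible set $S$ to consist of functions $u_0\in C^\infty(\bar I)$ that are non-vanishing on $I$, supported on $\bar I$, satisfy \eqref{DecayRate}, and whose weighted Sobolev norms in the $y$-coordinate are finite up to a large but fixed order.

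With the function space fixed, the proof follows the paradifferential recipe for quasilinear dispersive equations, carried out in the $y$-frame. First, derive high-order energy estimates for the linearization
\[
i(w_t + v w_x) = (\rho w_x)_x + \text{l.o.t.}
\]
about a reference profile, using a symmetrizer adapted to the $y$-weighted Sobolev spaces and exploiting the hydrodynamic variables \eqref{HydroVariables}. Next, construct nonlinear solutions either by iteration of the linearized equation or by a dispersive regularization, with uniform estimates yielding a common time of existence. A short-time continuity argument, based on the continuity equation for $\rho$, ensures that $\supp u(t)\subset\bar I$, that $u(t)$ remains non-vanishing on $I$, and that the $y$-coordinate associated to $u(t)$ stays comparable to that of $u_0$, so that the estimates are self-consistent.

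Uniqueness and Lipschitz dependence in the $L^2$ topology follow from an energy estimate on the difference $w = u_1 - u_2$ of two solutions in the regularity class of the theorem: the difference solves a linear Schr\"odinger-type equation whose coefficients are controlled by $\|u_{j,x}\|_{L^\infty}$ and $\|(\tfrac12 u_j^2)_{xx}\|_{L^\infty}$—exactly what the hypotheses provide—so a Gr\"onwall argument closes the estimate. Conservation of mass, momentum, and energy for the constructed solution is then obtained by approximation and passage to the limit in the integrated conservation identities.

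The main obstacle is the high-order energy estimate for the linearization in the $y$-adapted spaces. The coordinate change $x\mapsto y$ stretches neighborhoods of the endpoints to infinity; although the transport coefficient appearing in the $y$-frame is bounded (since $v = 2\Im(u\bar u_x)$ vanishes at the endpoints at least at the rate of $|u_0|$), commutators between the symmetrizer and the stretched transport and dispersive operators can a priori produce dangerous contributions near the endpoints, and the Moser-type nonlinear estimates must be compatible with the exponential-type weights coming from linear endpoint decay. A careful paradifferential analysis, adapting the non-degenerate quasilinear theory of~\cite{MR2955206, marzuola2020quasilinear} to the present degenerate setting via the straightened coordinate, should keep these commutators under control; this is the technical heart of the argument.
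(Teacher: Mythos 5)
Your overall plan (flatten the degeneracy by changing to the $y$-variable, then run an energy method in the flat frame, and prove $L^2$ uniqueness and Lipschitz dependence by a Gr\"onwall estimate on the difference) is the right starting point, and the uniqueness/continuity part agrees with the paper. But the existence part, as written, would not close, for a concrete reason that the paper has to work hard to address. In $y$-coordinates the equation is not merely a perturbation of the flat Schr\"odinger equation: it carries a first-order term $2i\beta U_y$ with $\beta=\Im(\bar u u_x/|u|)$, and in the case of linear endpoint decay this coefficient does not decay as $|y|\to\infty$. The associated linearization then fails the Takeuchi--Mizohata integrability condition, which is precisely the obstruction to obtaining an $H^s$ (or any fixed-order weighted Sobolev) energy estimate for a variable-coefficient $1$D Schr\"odinger operator. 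Your proposed function space, ``weighted Sobolev norms in the $y$-coordinate ... up to a large but fixed order,'' is therefore a space in which the a priori estimate cannot be proved; no amount of paradifferential bookkeeping fixes a genuine Mizohata obstruction. The paper's fix is to work in analytic Gevrey-type spaces $AH^s_\tau$ with a \emph{time-shrinking} radius $\tau(t)$: the decrease of $\tau$ generates a global half-derivative smoothing term (Lemma on $\cC_\tau$, estimate \eqref{SmoothingEffect}) that absorbs the Mizohata-violating loss. That mechanism, or something equivalent, is essential and is missing from your sketch.

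Two secondary points. First, your change of variables is adapted only to $u_0$, whereas the paper uses a time-dependent $y(t,x)=\int_0^x |u(t,\zeta)|^{-1}d\zeta + c(t)$. The time-dependent choice makes $(|u|\partial_x)^2\mapsto\partial_y^2$ exactly, so the transformed problem is \emph{derivative semilinear}, which is what makes the analytic-space energy method tractable; with the static $y_0$-coordinate the principal coefficient is $(|u(t,x)|/|u_0(x)|)^2$ and the problem stays quasilinear, which is strictly harder to run in analytic norms. Second, to control $\beta$ and the endpoint decay in a translation-invariant way, the paper couples the $U$-equation to an equation for $W=\bar u u_x/|u|$ and measures $W$ in a Zhidkov space $Z^s$ (because $W$ does not decay at spatial infinity in $y$). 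Your sketch has no analogue of this auxiliary variable, and without it you do not have a handle on the size of $\beta$ needed to make even the analytic estimate close.
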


As far as we are aware, this is the first proof of local well-posedness for a degenerate quasilinear Schr\"odinger equation. A key innovation in this article is that, unlike in the case of the KdV equation considered in~\cite{MR4011864}, we are able to handle the critical (linear) endpoint decay rate.

Critical endpoint decay rates were previously considered in the setting of the shoreline problem for a model of shallow water waves in~\cite{MR3817287}. In this case, the finite speed of propagation allows the authors to work with a finite number of (weighted) derivatives, as in the subcritical endpoint decay rates considered in~\cite{MR4011864}. In the Schr\"odinger case, where the speed of propagation is infinite, we are no longer able to work with a finite number of weighted derivatives, which significantly complicates the analysis.

The set \(S\), which will be described in detail Section~\ref{sect:Coords}, essentially consists of solutions that are analytic with respect to the weighted derivative \(|u_0|\partial_x\). This set is extremely ``small'' in any reasonable sense (for example, every element of \(S\) must be analytic on the open interval \(I\)) and is certainly far from optimal in the case of sublinear endpoint decay. However, the set \(S\) does contain the compact breather \eqref{compactbreather} and reasonable perturbations thereof (see Proposition~\ref{pertprop}). This motivates us to consider its stability. As a corollary to Theorems~\ref{thrm:GHGM},~\ref{thrm:Main}, we may apply the method of~\cite{cazenave1982orbital} to obtain the following stability result, which we prove in Section~\ref{sect:STABILITY}:
\begin{thm}[Stability of the energy-minimizing breather]\label{thrm:OrbitalStability}
Let \(\mu=1\), \(\omega>0\) and \(\epsilon>0\). Then there exists some \(\delta>0\) so that for any \(u_0\in S\) satisfying
\begin{equation}\label{StabilityHypothesis}
\inf_{\theta,h\in \R}\|u_0(\cdot + h)^2 - e^{2i\theta}\phi_\omega^2\|_{L^1\cap\dot H^1}\leq \delta,
\end{equation}
we have the estimate
\begin{equation}\label{StabilityConcolusion}
\sup\limits_{t\in[0,T]}\left(\inf\limits_{\theta,h\in \R}\|u(t,\cdot+h)^2 - e^{-2it\omega + 2i\theta}\phi_\omega^2\|_{L^1\cap\dot H^1}\right)\leq\epsilon,
\end{equation}
where \(T>0\) is the lifespan of the solution \(u\) obtained in Theorem~\ref{thrm:Main}.
\end{thm}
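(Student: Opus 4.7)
The plan is to adapt the classical orbital stability argument of Cazenave and Lions~\cite{cazenave1982orbital}, combining the variational characterization of $\phi_\omega$ from Theorem~\ref{thrm:GHGM} with the conservation of mass and energy granted by Theorem~\ref{thrm:Main}. The natural variable throughout is $u^2$, since the conserved quantities factor through it,
\[
M[u] = \|u^2\|_{L^1}, \qquad H[u] = \tfrac14\|u^2\|_{\dot H^1}^2 - \tfrac\mu 2\|u^2\|_{L^2}^2,
\]
and the symmetries act on $u^2$ by translation and a doubled phase; in particular, the explicit time-dependent phase $e^{-2it\omega}$ in \eqref{StabilityConcolusion} can be absorbed into the $\theta$ in the infimum by a relabeling, so it suffices to control $\inf_{\theta,h}\|u(t,\cdot+h)^2 - e^{2i\theta}\phi_\omega^2\|_{L^1\cap\dot H^1}$ uniformly in $t$.

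First, I would observe that $M$ and $H$ are Lipschitz-continuous with respect to the norm $\|u^2\|_{L^1\cap\dot H^1}$: the $L^1$ part controls $M$, while the one-dimensional Gagliardo--Nirenberg inequality $\|u^2\|_{L^2}^2 \lesssim \|u^2\|_{L^1}\|u^2\|_{\dot H^1}$ controls the quartic term in $H$. Consequently the smallness hypothesis \eqref{StabilityHypothesis} translates into $|M[u_0]-M[\phi_\omega]|+|H[u_0]-H[\phi_\omega]| \leq \eta(\delta)$ with $\eta(\delta)\to 0$ as $\delta\to 0$, and by the conservation laws the same bound holds for $u(t)$ at every $t\in[0,T]$.

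The heart of the argument is a variational lemma: given $\epsilon>0$, there exists $\eta>0$ such that whenever $v\in L^2$ satisfies $|M[v]-M[\phi_\omega]|+|H[v]-H[\phi_\omega]|<\eta$, there exist $\theta,h\in\R$ with $\|v(\cdot+h)^2 - e^{2i\theta}\phi_\omega^2\|_{L^1\cap\dot H^1}\leq\epsilon$. I would prove this by contradiction: a violating sequence $v_n$ is, by the previous step, an (asymptotically) minimizing sequence for the constrained variational problem of Theorem~\ref{thrm:GHGM}. Standard concentration-compactness applied to $|v_n|^2$---ruling out vanishing by positivity of the minimum and ruling out splitting by strict subadditivity of the infimum in the mass---produces, after translations $h_n$, a tight subsequence for which $v_n(\cdot+h_n)^2$ converges weakly in $\dot H^1$ and strongly in $L^1$ to a limit $w$. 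By the uniqueness in Theorem~\ref{thrm:GHGM}, $w = e^{2i\theta_\infty}\phi_\omega^2$. Convergence of the energy to the minimum upgrades the weak $\dot H^1$ convergence to strong, contradicting the assumed separation.

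Combining these ingredients yields the theorem: the variational lemma applied at each $t\in[0,T]$, with the conserved mass and energy playing the role of the smallness hypothesis, delivers \eqref{StabilityConcolusion}. The main obstacle I anticipate is the concentration-compactness step of the variational lemma: because $\phi_\omega^2$ is compactly supported but the minimization problem is posed on all of $\R$, obtaining strong $L^1\cap\dot H^1$ convergence (rather than merely weak convergence) of modulated minimizing sequences requires careful attention to tightness and to the passage from weak to strong convergence via the energy identity. A subsidiary point to verify is that the uniqueness in Theorem~\ref{thrm:GHGM}, stated for non-negative minimizers up to translation, extends via the phase symmetry to uniqueness up to translation and phase among complex competitors, so that the limit $w$ is forced to lie in the orbit of $\phi_\omega^2$. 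Fortunately the explicit form \eqref{compacton} of $\phi_\omega$ should render these steps a technical but standard application of the Cazenave-Lions method.
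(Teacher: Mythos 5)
Your approach is correct and is essentially the same Cazenave--Lions argument used in the paper: the smallness of the mass and energy deficits, propagated by the conservation laws of Theorem~\ref{thrm:Main}, is fed into the compactness of minimizing sequences for the constrained variational problem, and the uniqueness of the minimizing orbit then forces proximity to $E$. The one shortcut you are missing is that both the concentration-compactness step (compactness, after translation, of minimizing sequences in $L^1\cap\dot H^1$) and the extension of uniqueness to complex competitors in the $q=u^2$ variable are already established in~\cite{MR4042218} and are cited by the paper as Proposition~\ref{prop:OldPaperConsequence}, so you need not reprove them. A cosmetic slip: the Gagliardo--Nirenberg inequality in one dimension is $\|q\|_{L^2}^2\lesssim\|q\|_{L^1}^{4/3}\|q_x\|_{L^2}^{2/3}$ rather than $\|q\|_{L^1}\|q_x\|_{L^2}$, but either form suffices for the local Lipschitz continuity of $H$ that you need.
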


\begin{rem}
Somewhat unusually, the stability in Theorem~\ref{thrm:OrbitalStability} is obtained in terms of \(u^2\) instead of \(u\). This topology arises naturally from the observation that, for \(q := u^2\) and \(\mu = 1\), the mass and energy may be written as
\[
M = \int |q|\,dx\qtq{and} H = \tfrac14\int|q_x|^2\,dx - \tfrac12\int |q|^2\,dx.
\]
\end{rem}

\begin{rem}
The proof of Theorem~\ref{thrm:OrbitalStability} applies to any interval \([0,T]\) on which the solution \(u\) of \eqref{QLS} conserves both the mass and energy, and \(u^2\in C([0,T];L^1\cap \dot H^1)\). Theorem~\ref{thrm:Main} guarantees that this timescale is at least non-trivial, but it is currently far from clear whether or not one expects to be able to take arbitrarily large \(T\). Indeed, in the corresponding KdV case, it was been proved in~\cite{2017arXiv170903322Z,MR4011864} that the support of the solution cannot remain constant on arbitrarily long timescales, which suggests a possible finite time breakdown of regularity. Whether or not such a phenomenon holds for \eqref{QLS} and whether or not this can violate orbital stability on sufficiently long timescales remains an interesting open problem.
\end{rem}

\begin{rem}
In \cite{MR4042218}, the authors also construct traveling compactons as solutions to \eqref{QLS}, however these states arise at the expense of a highly singular phase and hence significantly complicate the regularity and boundary conditions considered.  As a result, at present we leave stability of these states as an open problem.
\end{rem}

\subsection*{Outline of the proof} In our previous work~\cite{MR4011864} on the KdV analogue of \eqref{QLS}, we use the hydrodynamic formulation of the problem (see \eqref{Hydrodynamic} below) to switch to Lagrangian coordinates, which has the effect of freezing the degeneracy at the initial time \(t = 0\). We then make a change of independent variable to flatten the degeneracy and reduce the problem to a non-degenerate quasilinear equation, which can be solved using the energy method. These changes of variable were inspired by similar approaches in related degenerate problems, e.g.~\cite{MR1378250,KochHT,MR3375536,MR3375546,MR3197240,MR3544328,MR1668860,MR2839301,MR2779087,MR2547977,MR3817287}.

While a similar approach formed the basis of our original investigation of \eqref{QLS}, a key difficulty was encountered due to the need to work in spaces of analytic functions. If the initial data has linear endpoint decay, after changing the independent variable, we are required to propagate exponential decay of the initial data to the solution. Even in the case of a constant coefficient linear Schr\"odinger equation, propagation of exponential decay of the data requires controlling the solution in spaces of analytic functions. However, as the approach of~\cite{MR4011864} requires working with a quasilinear Schr\"odinger equation, one must bound the solution in spaces of exponentially weighted analytic data adapted to a variable metric.

To circumvent this difficulty, we introduce two key new ideas in this article. The first is a change of independent variable that prioritizes flattening the degeneracy and reduces \eqref{QLS} to a derivative semilinear Schr\"odinger equation. This significantly simplifies the problem of controlling our solution in spaces of analytic functions. The second is to couple \eqref{QLS} with an equation for \(w:=\frac{\bar u u_x}{|u|}\), which controls the decay of the solution \(u\). Indeed, by controlling this function pointwise we will be able to work in translation-invariant (with respect to the new independent variable) spaces. This not only simplifies the nonlinear estimates considerably, but also allows us to replace the asymptotic assumption \eqref{endpointasymptotics}, with the far less prescriptive assumption \eqref{DecayRate}.

Our motivation for considering the function \(w\) is most readily understood by writing the equation \eqref{QLS} in the form
\[
iu_t = (|u|\partial_x)^2u + i (\Im w) (|u|\partial_x)u + \mu |u|^2u.
\]
After making our change of independent variable, which maps \(|u|\partial_x\mapsto \partial_y\), controlling the sub-principle term requires controlling \(\Im w\). To do this, we must consider the complex-valued function \(w\) rather than just its imaginary part. Indeed, we may compute that \(w\) satisfies the Schr\"odinger equation,
\[
iw_t = (|u|\partial_x)^2w + \text{lower order terms}.
\]
The variable \(w\) also arises naturally from the hydrodynamic formulation of \eqref{QLS}
\begin{equation}\label{Hydrodynamic}
\begin{cases}
\rho_t + (v\rho)_x = 0,\smallskip\\
v_t + vv_x + (v^2 - \rho\rho_{xx} + \tfrac12\rho_x^2 - \mu \rho^2)_x = 0,
\end{cases}
\end{equation}
where \(\rho,v\) are defined as in \eqref{HydroVariables}. We may then compute that
\[
w = \frac{\rho_x}{2\sqrt{\rho}} - i\frac{v}{2\sqrt{\rho}}.
\]

Unfortunately, at least in the case of linear endpoint decay, the semilinear equation we obtain for \(w\) in our new coordinate system fails the Takeuchi-Mizohata condition~\cite{MR860041,MR564671} for well-posedness of linear Schr\"odinger equations in Sobolev spaces (see also~\cite{MR759481,MR3266990,MR2967120,MR3179690}). To address this issue, as we have already alluded to, we work in spaces of analytic functions. By allowing the radius of analyticity to shrink linearly in time, we obtain a global smoothing effect that is sufficient to control the problematic terms. We remark that similar analytic spaces and estimates have a long history of application to PDEs (and even ODEs) and have previously appeared in several works on Schr\"odinger equations, e.g.~\cite{MR2265624,MR1046275,MR1054532,MR2063545,MR870865,MR1360541,MR1152001,MR1791328,MR3917735,MR1855975,MR3669837}.

Another difficulty we encounter with our semilinear equations for \(u,w\) is a transport term with unbounded velocity. This prevents us from using a contraction mapping argument, so instead our proof of existence relies on an energy method: we construct solutions as weak limits of regularized equations. We remark that the fact that we are unable to use a contraction mapping argument is unsurprising, given that the original equation \eqref{QLS} is quasilinear. This unbounded velocity term also prevents us from comparing two solutions in our new coordinate system. To prove uniqueness and continuity we instead use the original equation \eqref{QLS}.

While the function \(w\) significantly simplifies some of the analysis, it has the disadvantage that we do not expect it will decay at spatial infinity in our new coordinates (at least in the case of linear endpoint decay). To handle this, we bound low frequencies in \(L^\infty\) and high frequencies in Sobolev spaces. This enables us to treat non-decaying data, while still using energy estimates to control the high frequency contributions.

The remainder of the paper is structured as follows: In Section~\ref{sect:Coords} we discuss the change of variables and define the set \(S\) of initial data. We provide additional details for some of the more involved computations appearing in this section in Appendix~\ref{app:Computations}. In Section~\ref{sect:Prelims} we prove several preliminary estimates for our spaces of analytic functions. Our main nonlinear estimates are stated in Proposition~\ref{prop:NonlinearEstimates} but, as they are standard albeit technical, we delay the proof until Appendix~\ref{app:Multilinear}.

We begin our proof of existence of solutions to \eqref{QLS} with a priori estimates for model linear equations in Section~\ref{sect:Linear}. We then apply these in Section~\ref{sect:Existence} to obtain a solution of the semilinear equations described above. Once we have solved the semilinear problems to obtain \(u,w\) in our new coordinate system, it remains to verify that the solution we construct has sufficient regularity to invert the change of coordinates and obtain a solution to the original equation \eqref{QLS}. This is the main task in Section~\ref{sect:Invert}, where we complete the proof of Theorem~\ref{thrm:Main}.

Finally, in Section~\ref{sect:STABILITY} we prove our stability result, Theorem~\ref{thrm:OrbitalStability}

\subsection*{Acknowledgement} This project was started as a collaboration with Pierre Germain, to whom the authors are extremely grateful for many fruitful discussions and several invaluable contributions towards understanding the structure of the problem. The authors also wish to thank Sung-Jin Oh for several enlightening discussions about degenerate dispersive equations. Finally, the authors would like to thank the anonymous referee for their careful reading of the manuscript and several insightful comments and suggestions.

\section{Reformulation of the problem}\label{sect:Coords}

In this section we introduce the various changes of variable required to prove Theorem~\ref{thrm:Main} and define the set \(S\) of initial data.

\subsection{Changes of variable}
Motivated by the linearization of \eqref{QLS}, we introduce the independent variable
\begin{equation}\label{Coords}
y(t,x) = \int_0^x \frac1{|u(t,\zeta)|}\,d\zeta + c(t)\qtq{for}x\in I,
\end{equation}
where \(c(t)\) is a real-valued, continuously differentiable function satisfying \(c(0) = 0\). As we are assuming \(u_0\) is non-zero on \(I\) as well as \eqref{DecayRate}, the map \(x\mapsto y(0,x)\) is readily seen to be a diffeomorphism from \(I\) onto \(\R\). The integrand is designed precisely to flatten the degeneracy, whereas the time-dependent constant \(c(t)\) will be chosen shortly to provide a helpful cancellation. The freedom to choose \(c(t)\) is due to the gauge-invariance of this change of variables: we have total freedom to decide the value of \(y(t,0)\).

Using this change of variable we define
\begin{equation}\label{U-W-def}
U(t,y(t,x)) := u(t,x)\qtq{and} W(t,y(t,x)) := w(t,x) = \frac{\bar u(t,x)u_x(t,x)}{|u(t,x)|}.
\end{equation}
The variable \(W\) will be used to control the decay of \(U\) and is related to \(U\) by the identity
\begin{equation}\label{UtoW}
W = \frac{\bar U U_y}{|U|^2}.
\end{equation}

We denote the real and imaginary parts of \(W\) by
\begin{equation}\label{alpha-beta-def}
\alpha := \Re W\qtq{and} \beta := \Im W,
\end{equation}
and use the functions \(\alpha,\beta\) to fix our gauge by taking \(c(t)\) to solve the equation
\begin{equation}\label{GaugeFixing}
\begin{cases}
c_t(t) = - \beta(t,c(t)) - 3\displaystyle\int_0^{c(t)}\alpha(t,\zeta)\beta(t,\zeta)\,d\zeta,\smallskip\\
c(0) = 0 ,
\end{cases}
\end{equation}
so that the equation \eqref{QLS} can be written as
\begin{equation}\label{U-eqn}
i\left(U_t + bU_y\right) = U_{yy} + 2i\beta U_y + \mu |U|^2U,
\end{equation}
where the real-valued coefficient
\begin{equation}\label{b-def}
b(t,y) := - 3\int_0^y\alpha(t,\zeta)\beta(t,\zeta)\,d\zeta
\end{equation}
satisfies
\[
y_t(t,x) = b(t,y(t,x)) - \beta(t,y(t,x)).
\]
As discussed in the introduction, in order to solve the equation \eqref{U-eqn} we will also need to control \(W\), which we compute satisfies the equation
\begin{equation}\label{W-eqn}
i\left(W_t + bW_y\right) = W_{yy} + (2W^2 - \tfrac12|W|^2)_y + 3i\alpha\beta W + 2\mu |U|^2\alpha.
\end{equation}

For the reader's convenience, we outline these computations in Appendix~\ref{app:Computations}.

We conclude our discussion of the change of variables by performing these computations in the special case of the compact breather:
\begin{ex}[The compact breather in \(y\)-coordinates]\label{ex:CB}
Let \(\theta\in \R\) and \(u(t,x) = e^{-it\omega + i\theta}\phi_\omega(x)\) be as in \eqref{compactbreather}. Then, for \(x\in I = (-\frac\pi{\sqrt 2},\frac\pi{\sqrt 2})\) we have
\[
y(t,x) = \int_0^x \frac1{\sqrt{2\omega}\cos(\frac \zeta{\sqrt2})}\,d\zeta = \tfrac1{\sqrt\omega}\ln\left(\tan\left(\tfrac x{\sqrt2}\right) + \sec\left(\tfrac x{\sqrt 2}\right)\right),
\]
where we note that \(\Im w = 0\) and hence \(c\equiv 0\). As a consequence, we have
\begin{align*}
U(t,y) &= e^{-it\omega + i\theta}\sqrt{2\omega}\sech(\sqrt\omega y),\\
W(t,y) &= - \sqrt\omega\tanh(\sqrt\omega y).
\end{align*}
\end{ex}

\subsection{Function spaces}
It is natural to bound the solution \(U\) in Sobolev-type spaces. Given \(s\geq0\) we define the Sobolev space \(H^s\) with norm
\[
\|f\|_{H^s}^2 := \int \<\xi\>^{2s}| \hat{f} (\xi)|^2\,d\xi,
\]
where \(\<\xi\> = \sqrt{1 + |\xi|^2}\), and the Fourier transform
\[
\hat{f} (\xi) := \tfrac1{\sqrt{2\pi}}\int f(x)e^{-ix\xi}\,dx.
\]

In the case of linear endpoint decay, it is clear from Example~\ref{ex:CB} that we should not expect \(W\) to decay as \(|y|\to\infty\). This motivates us to introduce the space \(Z^s\) with norm
\[
\|f\|_{Z^s} := \|f\|_{L^\infty} + \|f_y\|_{H^{s-\frac12}}.
\]
The space \(Z^s\) is often referred to as the Zhidkov space due to its original appearance in~\cite{MR1831831,MR906067} and has been applied to study the NLS with non-vanishing boundary conditions, e.g.~\cite{MR2099970,MR2317389}.

In order to control the subprinciple terms in the equations for \(U\) and \(W\) we will need our solution to be analytic. Given a function \(m\colon \R\rightarrow \C\) we define the Fourier multiplier
\[
m(D_y)f (x) := \tfrac1{\sqrt{2\pi}}\int m(\xi) \hat f(\xi)e^{ix\xi}\,d\xi.
\]
Given \(\tau>0\) and a Banach space \(X\) of tempered distributions on \(\R\) with norm \(\|\cdot\|_X\) we define the subspace \(AX_\tau\) of \(X\) to consist of \(f\in X\) with finite norm
\[
\|f\|_{AX_\tau} := \|(e^{\tau D_y} f,e^{-\tau D_y}f)\|_X,
\]
where, for concreteness, we make the convention that if \(g = (g_1,g_2,\dots,g_n)\) then
\[
\|g\|_X = \sum\limits_{j=1}^n\|g_j\|_X.
\]
In particular, the space \(AH^s_\tau\) coincides with the definition of the analytic Gevrey spaces appearing in~\cite{MR1026858,MR870865}.

Before turning to the definition of the set \(S\) of initial data and stating the existence part of Theorem~\ref{thrm:Main} in \(U,W\) coordinates, it will be useful to introduce a little more notation.

Given \(T>0\) and a Banach space \(X\) of tempered distributions we define the space \(\Cont([0,T];X)\) to consist of continuous functions \(f\colon [0,T]\rightarrow X\) and be endowed with the supremum norm. Due to the presence of a complex transport term in the equation for \(W\), we will need the radius of analyticity \(\tau\) to be time-dependent. As a consequence, given \(T>0\) and a continuous function \(\tau\colon[0,T]\to (0,\infty)\) we say that \(f\in \Cont([0,T];AX_{\tau})\) if the vector-valued function \((e^{\tau D_y} f,e^{-\tau D_y}f)\in \Cont([0,T];X)\).

In a similar fashion, we say that \(f\in L^p((0,T);AX_{\tau})\) if \((e^{\tau D_y} f,e^{-\tau D_y}f)\in L^p((0,T);X)\) and denote
\[
\|f\|_{L^p_TX}^p = \int_0^T \|f(t)\|_X^p\,dt,\qquad \|f\|_{L^p_TAX_\tau}^p = \int_0^T\|f(t)\|_{AX_\tau}^p\,dt,
\]
with the obvious modification when \(p = \infty\).

Finally, if \(X\) has a predual then we write \(f\in \Cont_w([0,T];AX_{\tau})\) if \(f\in L^\infty((0,T);AX_{\tau})\) and \((e^{\tau D_y} f,e^{-\tau D_y}f)(t) \overset\ast\rightharpoonup (e^{\tau D_y} f,e^{-\tau D_y}f)(s)\) in \(X\) as \(t\to s\) for \(t,s\in [0,T]\).

\subsection{Existence for \(U,W\)}~
In order to prove existence for \eqref{QLS}, we will prove existence of a solution to the equation \eqref{U-eqn}. Here it will be useful to treat the equations \eqref{U-eqn} and \eqref{W-eqn} as a system, where the initial data is not necessarily related via the identity \eqref{UtoW}. We then have the following theorem:

\begin{thm}\label{thrm:UW-existence}
Let \(0<s\leq\frac12\) and \(0<\tau_0\leq 1\). Then, given any
\[
U_0\in AH^s_{\tau_0}\quad\text{and}\quad W_0\in AZ_{\tau_0}^s,
\]
there exists some \(T>0\), a non-increasing, continuously differentiable function \(\tau\colon[0,T]\to(0,\infty)\) so that \(\tau(0) = \tau_0\), and a solution \(U\in \Cont_w([0,T];AH^s_\tau)\), \(W\in \Cont_w([0,T];AZ_{\tau}^s)\) of the system \eqref{U-eqn}, \eqref{W-eqn} with initial data \(U(0) = U_0\) and \(W(0) = W_0\).

Further, we have the estimates
\begin{align}
\|U\|_{L^\infty_TAH^s_\tau}&\lesssim \|U_0\|_{AH^s_{\tau_0}},\label{U-AP}\\
\|W\|_{L^\infty_TAZ^s_\tau}&\lesssim \|U_0\|_{AH^s_{\tau_0}} + \|W_0\|_{AZ^s_{\tau_0}},\label{W-AP}
\end{align}
and the lower bound
\begin{equation}\label{T-bound}
T\gtrsim \frac1{\|U_0\|_{AH^s_{\tau_0}}^2 + \|W_0\|_{AZ^s_{\tau_0}}^2}.
\end{equation}
\end{thm}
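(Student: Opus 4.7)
The plan is to construct solutions to the coupled system \eqref{U-eqn}, \eqref{W-eqn} by an energy method applied to a sequence of regularized problems, following the standard scheme for quasilinear equations that cannot be solved by a contraction argument. First I would introduce a smoothing parameter $\epsilon > 0$, e.g.\ via frequency truncation $P_{\leq 1/\epsilon}$ applied to the nonlinearities and to $b$, so that the truncated system becomes an ODE in the Banach space $AH^s_{\tau_0}\times AZ^s_{\tau_0}$ and is solvable for short times by Picard iteration. The real content is then to derive a priori estimates for the regularized solutions $(U^\epsilon, W^\epsilon)$ on a time interval and with a rate of analyticity $\tau(t)$ that are both uniform in $\epsilon$, pass to a weak--$\ast$ limit, and finally verify that the limit is distributionally a solution.

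The core of the argument is the energy estimate. Differentiating $\|U\|_{AH^s_\tau}^2 = \|e^{\tau D_y}U\|_{H^s}^2 + \|e^{-\tau D_y}U\|_{H^s}^2$ in time produces, from the moving weight, the good term
\[
2\dot{\tau}\,\bigl\|\langle D_y\rangle^s |D_y|^{1/2} e^{\pm \tau D_y} U\bigr\|_{L^2}^2,
\]
which is negative as long as $\dot{\tau}<0$, together with contributions from the equation. The two principal symbols $\partial_y^2$ and $b\,\partial_y$ are skew-adjoint after integrating by parts (using that $b$ is real and that $b_y = -3\alpha\beta$ is controlled pointwise by $W\in AZ^s_\tau$); the cubic term $\mu|U|^2 U$ is bounded using the algebra-type product estimates of Proposition~\ref{prop:NonlinearEstimates}. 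The dangerous contributions are the subprincipal transport $2i\beta U_y$ in \eqref{U-eqn} and the quadratic derivative nonlinearity $(2W^2-\tfrac12|W|^2)_y$ in \eqref{W-eqn}, each of which loses a full derivative. Both are estimated in analytic norms by
\[
C\bigl(\|U\|_{AH^s_\tau}^2 + \|W\|_{AZ^s_\tau}^2\bigr)\,\bigl\|\langle D_y\rangle^s |D_y|^{1/2} e^{\pm \tau D_y}(U,W)\bigr\|_{L^2}^2 .
\]
Choosing $\tau(t)$ to solve
\[
\dot{\tau}(t) = -C\bigl(\|U(t)\|_{AH^s_\tau}^2 + \|W(t)\|_{AZ^s_\tau}^2 + 1\bigr),\qquad \tau(0)=\tau_0,
\]
with $C$ large enough, exactly absorbs these bad terms into the gain from the shrinking radius of analyticity, and leaves an inequality of the schematic form $\tfrac{d}{dt}\mathcal E \lesssim \mathcal E^2$ for $\mathcal E := \|U\|_{AH^s_\tau}^2+\|W\|_{AZ^s_\tau}^2$. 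For the Zhidkov part of $\|W\|_{AZ^s_\tau}$ the $L^\infty$ bound on low frequencies is closed separately: one integrates the $W$-equation along the characteristics of $b$ and uses the Sobolev control of $W_y$ to estimate the resulting forcing. Standard continuation of inequalities gives the uniform bounds \eqref{U-AP}, \eqref{W-AP} and the lifespan lower bound \eqref{T-bound}, with $\tau(t)$ remaining strictly positive throughout by the same comparison argument.

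With uniform bounds in hand, one extracts a weak--$\ast$ limit $(U,W)$ in $L^\infty((0,T);AH^s_\tau\times AZ^s_\tau)$, and upgrades to strong convergence in a weaker space $\Cont([0,T]; H^{\sigma}_{\mathrm{loc}})$ for some $\sigma$ strictly below $s$ via the Aubin--Lions lemma, using the equations themselves to bound $\partial_t U^\epsilon, \partial_t W^\epsilon$ in a low regularity norm. This compactness is sufficient to pass to the limit in all the nonlinear terms, yielding a distributional solution; the weak continuity $U\in\Cont_w([0,T];AH^s_\tau)$, $W\in\Cont_w([0,T];AZ^s_\tau)$ then follows from the standard combination of strong continuity in a weaker topology with the uniform bound in the target space. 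The main obstacle, and the place where the analytic framework of the paper is indispensable, is the simultaneous propagation of analyticity and control of the derivative-losing terms $2i\beta U_y$ and $(2W^2-\tfrac12|W|^2)_y$: the delicate calibration of the ODE for $\dot\tau$ against the nonlinear estimates of Proposition~\ref{prop:NonlinearEstimates} is what both permits the existence argument to close and dictates the quadratic form of the lifespan bound \eqref{T-bound}.
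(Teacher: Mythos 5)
Your proposal follows essentially the same architecture as the paper's proof (regularize to an ODE in the analytic Banach space, derive uniform a priori estimates by exploiting the smoothing from a shrinking radius of analyticity, close the low-frequency $L^\infty$ bound on $W$ by integrating along the characteristics of $b$, pass to a weak--$\ast$ limit), and you correctly identify the two dangerous derivative-losing terms and the role of the analytic framework. There is, however, one genuine gap and one structural difference worth flagging.

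The gap is in the regularization step. The velocity $b(t,y)=-3\int_0^y\alpha\beta\,d\zeta$ is expected to grow linearly as $|y|\to\infty$ (since $W\in Z^s$ is bounded but need not decay, cf.\ Example~\ref{ex:CB}), and a frequency truncation $P_{\leq 1/\epsilon}$ does nothing to tame this spatial growth. Consequently the map $z\mapsto P_{\leq 1/\epsilon}(b\,P_{\leq 1/\epsilon}z_y)$ is not Lipschitz from $AH^s_{\tau_0}$ to itself, and the Picard--Lindel\"of argument for the regularized system does not go through as stated. The paper's regularized velocity $B(t,y;j)=-3\sech(2^{-j}y)\int_0^y\alpha_{\leq j}\beta_{\leq j}$ includes precisely the spatial cutoff needed to make $B\in L^\infty$ (with a $j$-dependent bound), which is what makes Lemma~\ref{lem:ODEExistence} work; notably, the a priori estimates of Propositions~\ref{prop:ModelAP} and~\ref{prop:TransportModelAP} are then arranged to depend only on $\|B_y\|_{AZ^0_\tau}$ and not on $\|B\|_{L^\infty}$, so the $j$-dependence washes out in the limit. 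You would need to replace your proposed truncation with something of this kind.

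On the structural side, your choice $\dot\tau=-C\bigl(\|U\|_{AH^s_\tau}^2+\|W\|_{AZ^s_\tau}^2+1\bigr)$ is the classical Nirenberg--Nishida device and can be made to work, but it couples the evolution of $\tau$ to the solution (whose norms in turn depend on $\tau$), and it obscures the fact that the gain from $\dot\tau<0$ is an $L^2$-in-time smoothing bound rather than a pointwise-in-time term; so the schematic inequality $\tfrac{d}{dt}\mathcal E\lesssim\mathcal E^2$ is not literally what falls out of the energy identity, and the continuation argument must instead be run in the integrated form $\sup_t\mathcal E + c\int_0^T(\text{smoothing})^2\lesssim\mathcal E(0)+\cdots$. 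The paper avoids this circularity by fixing $\tau(t)=\tau_0(1-\tfrac{M}{\delta}t)$ with a bootstrap constant $M\gtrsim\|U_0\|_{AH^s_{\tau_0}}^2+\|W_0\|_{AZ^s_{\tau_0}}^2$ chosen up front, proving the model estimates (Propositions~\ref{prop:ModelAP},~\ref{prop:TransportModelAP}) for this fixed linear $\tau$, and then closing a bootstrap on the solution norms; this yields the same lifespan bound $T\sim\delta/M$ while decoupling the choice of $\tau$ from the unknown. Your route is viable but heavier, and you would still need to verify that the linear-model a priori estimates hold for your more general monotone $\tau$.
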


\begin{rem}
The solutions we construct will be obtained by taking a weak limit of a regularized system of equations. Thus, a priori, our solution is a distributional solution of \eqref{U-eqn}, \eqref{W-eqn}. However, for any \(n\geq 0\) the space of bounded \(\Cont^n\) functions is (locally compactly) embedded in both \(AH^s_\tau\) and \(AZ^s_\tau\), so the corresponding \(U,W\) are smooth classical solutions of the equations \eqref{U-eqn}, \eqref{W-eqn}.
\end{rem}

\begin{rem}
The assumptions that \(0<s\leq \frac12\) and \(0<\tau_0\leq 1\) are solely for technical convenience and can be replaced by \(s>0\) and \(\tau_0>0\) by making suitable modifications to the various estimates.
\end{rem}

\subsection{The initial data set \(S\)} From the statement of Theorem~\ref{thrm:UW-existence} we obtain the following definition for the data set \(S\):
\begin{df}[The data set \(S\)]
Let \(S\) be the set of \(u_0\in L^2\) that are non-zero and smooth on \(I\), supported on \(\bar I\), and satisfy \eqref{DecayRate}, so that if we define
\[
y_0(x) = \int_0^x \frac1{|u_0(\zeta)|}\,d\zeta\quad\text{and}\quad U_0(y_0(x)) := u_0(x),
\]
then there exists some \(0<s\leq \frac12\) and \(0<\tau_0\leq 1\) so that
\[
U_0\in AH^s_{\tau_0}\quad \text{and}\quad\frac{\bar U_0 U_{0y}}{|U_0|^2}\in AZ^s_{\tau_0}.
\]
\end{df}

Due to the implicit nature of the definition of \(S\), it is not immediately clear what a typical element looks like. Example~\ref{ex:CB} shows that for any \(\omega>0\) the compact breather \(\phi_\omega\in S\). Further, given \(|\epsilon|<1\) we have \((1 + \epsilon)\phi_\omega = \phi_{(1 + \epsilon)^2\omega}\in S\). To obtain a slightly larger class of examples to which Theorem~\ref{thrm:OrbitalStability} may be applied, we conclude this section with an explicit construction of an admissible perturbation of the compact breather solution.

\begin{prop}[An admissible perturbation of the compact breather]
\label{pertprop}
Let \(\mu=1\), \(\omega > 0\) and the interval \(I = (-\frac\pi{\sqrt 2},\frac\pi{\sqrt 2})\). Let \(M,C\geq 1\) and \(f\colon I\rightarrow \R\) be a smooth function so that for any \(n\geq 0\) we have
\begin{equation}\label{BoundedDerivatives}
|\partial_x^n f(x)|\leq MC^n.
\end{equation}
Then,
\begin{equation}
\label{pert:ansatz}
    u_0 = e^{if}\phi_\omega\in S.
\end{equation}
\end{prop}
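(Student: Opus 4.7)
The plan is to reduce the statement $u_0\in S$ to an analytic-extension claim about two explicit functions, and then establish that claim by composing the Taylor-series extension of $f$ with the elementary holomorphic extension of the inverse coordinate change. Since $|u_0| = |e^{if}|\,\phi_\omega = \phi_\omega$, the map $y_0$ coincides with the one in Example~\ref{ex:CB} and has explicit inverse $x(y) = 2\sqrt 2\,\arctan(e^{\sqrt\omega y}) - \pi/\sqrt 2$. A direct computation then yields
\[
U_0(y) = e^{iF(y)}\sqrt{2\omega}\,\sech(\sqrt\omega\, y), \qquad W_0(y) = iF'(y) - \sqrt\omega\,\tanh(\sqrt\omega\, y),
\]
where $F(y) := f(x(y))$ is real on $\R$. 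The requirements that $u_0\in L^2$ be smooth and non-zero on $I$, supported on $\bar I$, and satisfy \eqref{DecayRate} are inherited from $\phi_\omega$ via $|u_0|=\phi_\omega$.

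The remaining task is to exhibit $s\in(0,\tfrac12]$ and $\tau_0\in(0,1]$ with $U_0\in AH^s_{\tau_0}$ and $W_0\in AZ^s_{\tau_0}$, which I would do by showing that $U_0,W_0$ extend holomorphically to the closed strip $\{|\Im y|\leq \tau_0\}$ with traces in $H^s$, respectively $Z^s$, on the boundary lines $\R\pm i\tau_0$ (recall that $e^{\pm\tau_0 D_y}f(y) = f(y\mp i\tau_0)$ by the Fourier convention in the paper). The bound \eqref{BoundedDerivatives} implies via Taylor expansion that $f$ extends to a holomorphic function on $\{z\in\C : \dist(z,I)<1/C\}$ satisfying $|f(z)|\leq M\exp(C\,\dist(z,I))$, together with analogous bounds on every derivative. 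The coordinate $x(y)$ is holomorphic on the strip $\{|\Im y|<\pi/(2\sqrt\omega)\}$ and sends $\R$ onto $I$, so by continuity one may choose $\tau_0>0$ small (depending on $C$ and $\omega$) such that $x$ maps $\{|\Im y|\leq\tau_0\}$ into the $1/C$-neighborhood of $I$. The composition $F = f\circ x$ is then holomorphic and uniformly bounded on this substrip, so $|e^{iF(y\pm i\tau_0)}|$ is uniformly bounded on $\R$.

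Granted this analytic extension, the $AH^s_{\tau_0}$ and $AZ^s_{\tau_0}$ norms reduce to $H^s$ and $Z^s$ bounds for the shifted traces. The $\sech$ factor decays exponentially on the shifted lines as long as $\tau_0<\pi/(2\sqrt\omega)$, and multiplication by the bounded analytic factor $e^{iF}$ preserves $H^s$, placing $U_0\in AH^s_{\tau_0}$ for any $s\leq\tfrac12$. For $W_0$, the $\tanh$ contribution is handled exactly as in Example~\ref{ex:CB}, while $F'(y) = f'(x(y))\,\sqrt{2\omega}\,\sech(\sqrt\omega\,y)$ is bounded and enjoys Schwartz-type decay together with its derivative on each shifted line, so $W_0\in AZ^s_{\tau_0}$. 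The only real obstacle is bookkeeping: one must pick $\tau_0$ small relative to both $1/C$ (so that $x$ stays within the analyticity domain of $f$) and $\pi/(2\sqrt\omega)$ (so that $\sech$, $\tanh$ and $x$ remain analytic on the strip); no fixed-point argument or dispersive estimate is required.
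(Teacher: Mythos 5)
Your proof is correct, but it follows a genuinely different route from the paper's. The paper stays entirely on the real line: it records that the factorial derivative bound $\tfrac1{n!}\|\partial_y^n g\|\leq KB^n$ forces $g$ into the analytic Gevrey space for $\tau_0<1/B$, proves $\tfrac1{n!}|\partial_y^n\sech y|\leq 2^n\sech y$ and $\tfrac1{n!}|\partial_y^n\tanh y|\leq 2^n$ by induction, and then controls $\partial_y^n(f'\circ x)$ through the Fa\`a di Bruno formula with Bell polynomial bookkeeping. You instead pass to $\C$: the hypothesis \eqref{BoundedDerivatives} in fact makes $f$ extend to an \emph{entire} function with $|f(z)|\leq Me^{C\dist(z,I)}$ (the Taylor series at a point of $I$ has infinite radius of convergence, so the $1/C$-neighborhood in your write-up can be replaced by all of $\C$, though this is immaterial), the coordinate map $x(y)=\sqrt2\arctan(\sinh(\sqrt\omega y))$ is holomorphic and contracts thin horizontal strips toward $\bar I$ because $x'=\sqrt{2\omega}\sech(\sqrt\omega y)$ is uniformly bounded on every sub-strip of $\{|\Im y|<\pi/(2\sqrt\omega)\}$, and you read off $AH^s_\tau/AZ^s_\tau$ membership from the boundary traces $g(\cdot\mp i\tau_0)=e^{\pm\tau_0 D_y}g$. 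The paper's computation is self-contained and real-variable, consistent with the $\cC_\tau$-multiplier machinery used throughout the article; your argument is more conceptual, exposing the underlying mechanism as a bounded holomorphic extension of $f\circ x$ to a strip, and it sidesteps the Bell polynomial estimate at the modest cost of invoking a Paley--Wiener-type characterization of the $A$-spaces.
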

\begin{proof}
First, we observe that if there exist constants \(K,B>0\) so that for all \(n\geq 0\) we have
\begin{equation}\label{Mercury}
\tfrac 1{n!}\|\partial_y^n f\|_{L^2}\leq K B^n,
\end{equation}
then taking \(0 < \tau_0< \frac 1B\) we have \(f\in AH^s_{\tau_0}\) for any \(s\in \R\). Similarly, if 
\begin{equation}\label{Venus}
\tfrac 1{n!}\|\partial_y^n f\|_{L^\infty}\leq K B^n,
\end{equation}
then taking \(0 < \tau_0< \frac 1B\) we have \(f\in AL^\infty_{\tau_0}\).

Second, we observe that if we have the pointwise bound
\[
\tfrac1{n!}|\partial_y^n f_j|\leq K_j B_j^n,
\]
for constants \(B_j>0\) and functions \(K_j = K_j(y)>0\), then
\begin{align}
\tfrac1{n!}|\partial_y^n(f_1f_2)| &\leq (n+1)K_1K_2 \max\{B_1^n,B_2^n\},\label{analytic-prod-1}\\
\tfrac1{n!}|\partial_y^n(f_1f_2f_3)|&\leq \tfrac{(n+2)(n+1)}2K_1K_2K_3 \max\{B_1^n,B_2^n,B_3^n\}.\label{analytic-prod-2}
\end{align}
By induction on \(n\), using that \(\sech''(y) = \sech y - 3\sech^3 y\) and the inequality \eqref{analytic-prod-2},  we may then bound
\begin{equation}\label{sech-bound}
\tfrac1{n!}|\partial_y^n\sech y|\leq 2^n\sech y,
\end{equation}
and similarly, using that \(\tanh''(y) = 2\tanh^3 y - 2\tanh y\),
\[
\tfrac1{n!}|\partial_y^n\tanh y|\leq 2^n.
\]

Next, we compute that
\[
W_0(y) = -\sqrt\omega\tanh(\sqrt\omega y) + iF(y)\sqrt{2\omega}\sech(\sqrt\omega y),
\]
where
\[
F(y) = f'\left(\sqrt2\arctan\left(\sinh\left(\sqrt\omega y\right)\right)\right).
\]
Using that
\[
\frac d{dy}\left(\sqrt2\arctan\left(\sinh\left(\sqrt\omega y\right)\right)\right)= |U_0(y)| = \sqrt{2\omega}\sech(\sqrt \omega y),
\]
we may apply the Fa\`a di Bruno formula to obtain
\[
\partial_y^n F = \sum\limits_{k=1}^n \partial_x^{k+1} f\left(\sqrt2\arctan\left(\sinh\left(\sqrt\omega y\right)\right)\right)\cdot B_{n,k}\left(|U_0|,\partial_y|U_0|,\dots,\partial_y^{n-k}|U_0|\right),
\]
where \(B_{n,k}\) is the partial Bell polynomial. Using the estimate \eqref{sech-bound}, the hypothesis \eqref{BoundedDerivatives}, and properties of the Bell polynomials (see e.g.~\cite{MR0460128}), we may bound
\[
\tfrac1{n!}|\partial_y^n F|\leq MCB^n\qtq{where}B = 2\sqrt\omega \max\left\{C,\sqrt{2\omega}\right\}.
\]

Applying \eqref{Mercury}, \eqref{Venus}, and \eqref{analytic-prod-1}, we may then choose \(0<\tau_0\ll1\) sufficiently small to ensure that \(W_0\in AZ^s_\tau\) for any \(s\in \R\). Finally, we observe that \(U_{0y} = U_0W_0\) and hence we may apply \eqref{Mercury} and \eqref{analytic-prod-1} to conclude that \(U_0\in AH^s_{\tau_0}\) for any \(s\in \R\).
\end{proof}

\begin{rem}
We may use the numerical methods presented in \cite{MR4042218} to explore perturbations of the compacton of the form in Proposition \ref{pertprop}.  In Figure \ref{fig:pertfig}, we present a time sequence of plots for solutions to \eqref{QLS} of the form \eqref{pert:ansatz} demonstrating that numerical stability is observed in a reasonable fashion on short time scales as the perturbation spreads towards the endpoints.
\end{rem}

\begin{figure} 
\centering
\includegraphics[width=.25\textwidth]{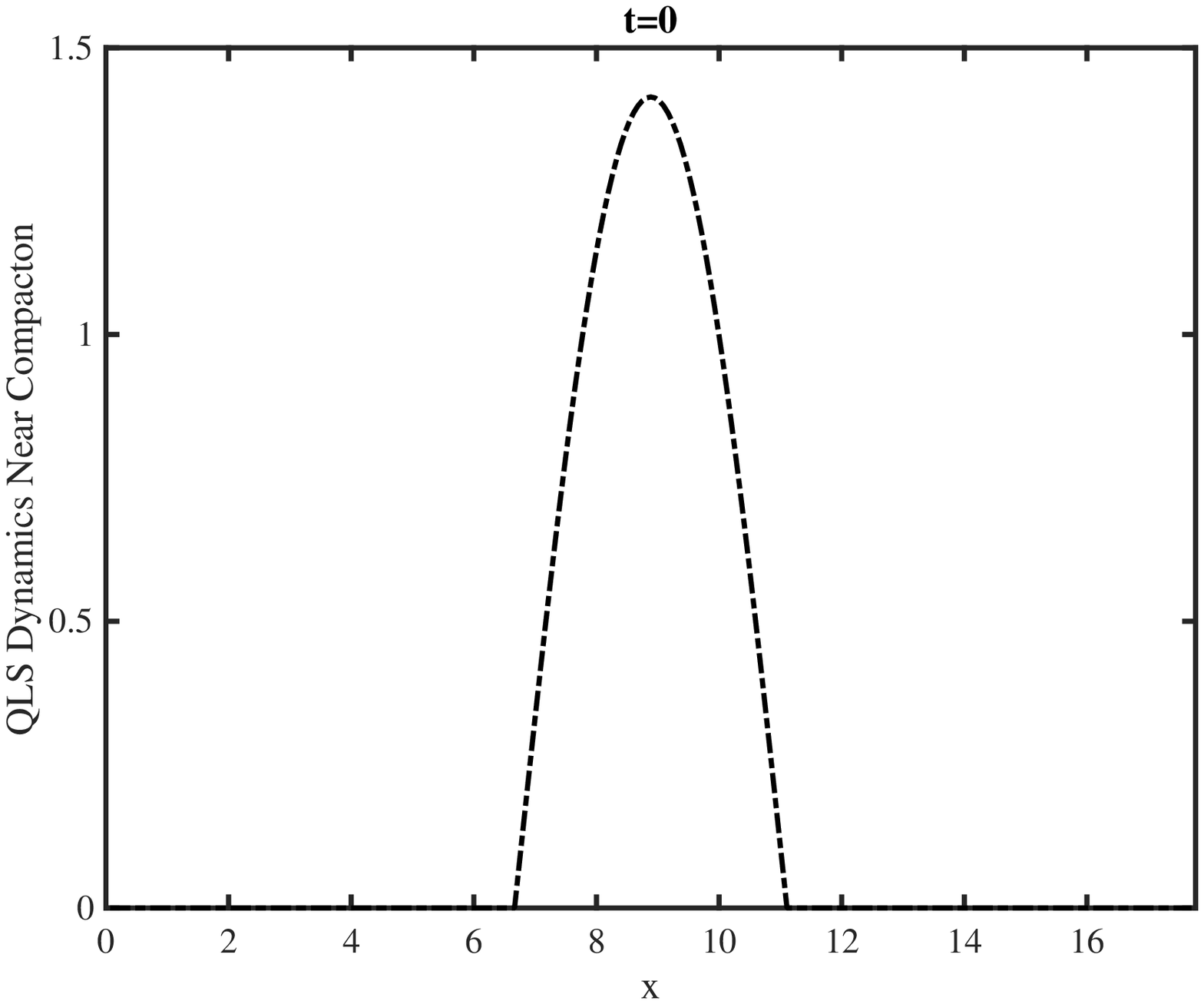}
\includegraphics[width=.25\textwidth]{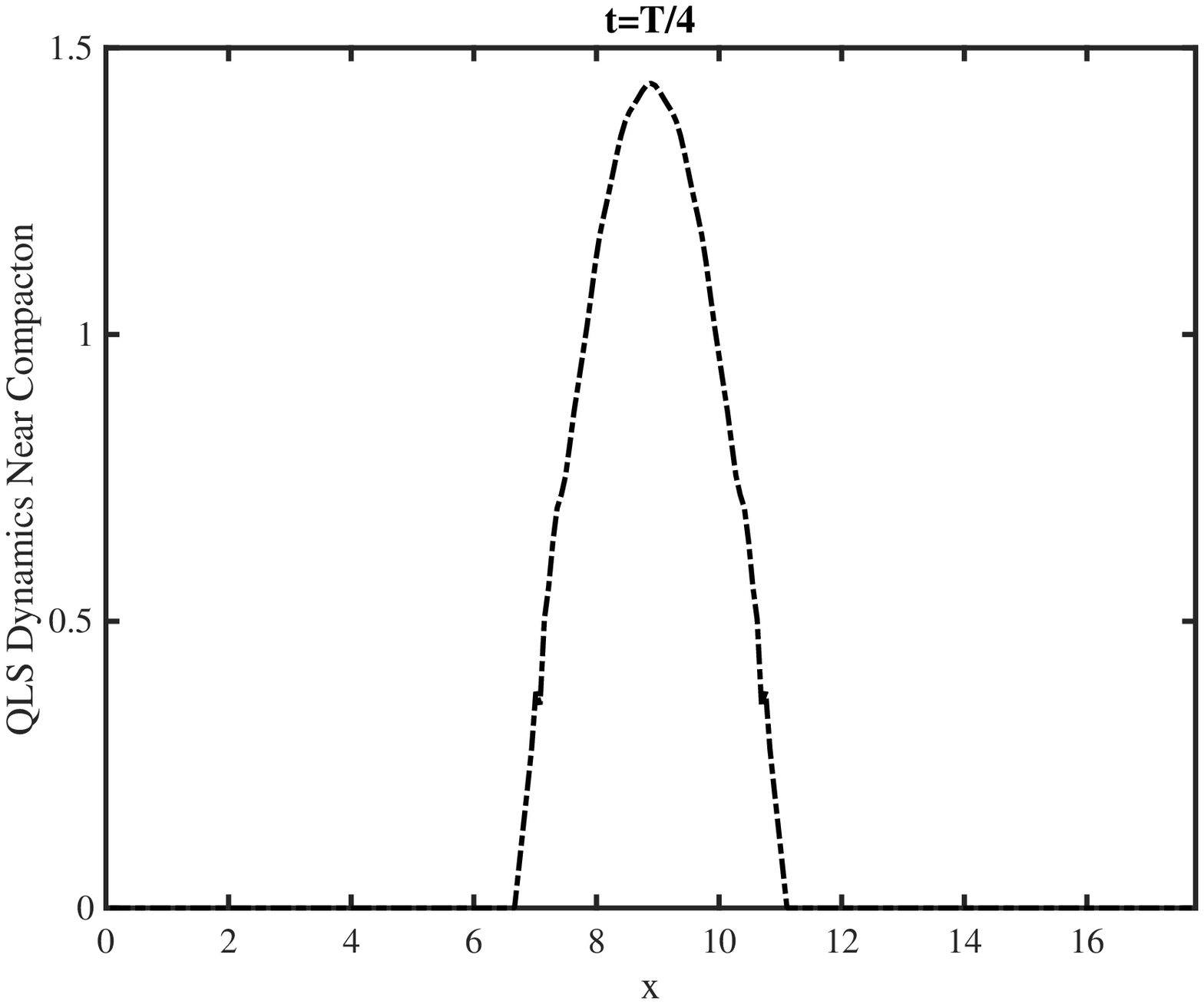}
\includegraphics[width=.25\textwidth]{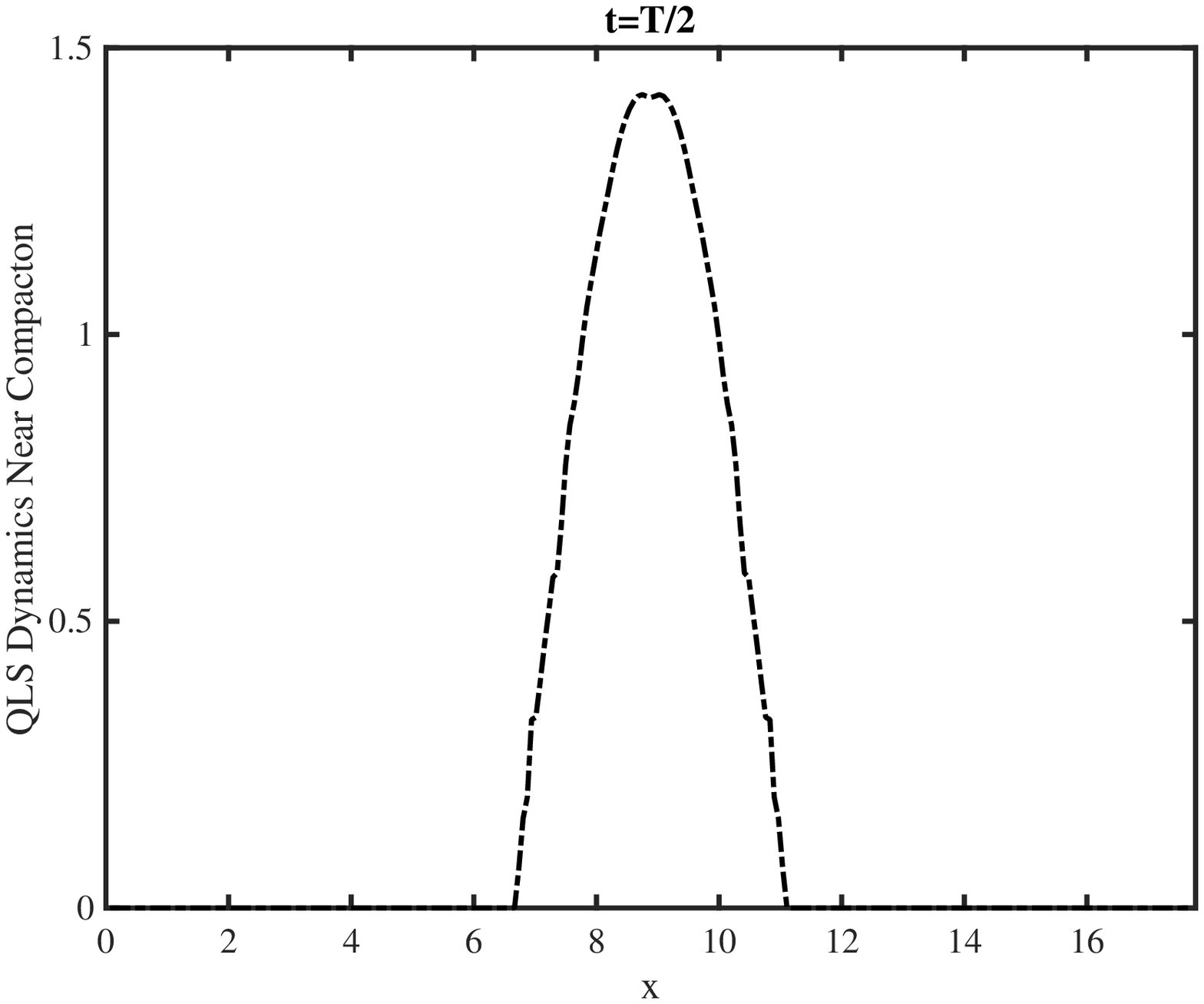} \\
\includegraphics[width=.25\textwidth]{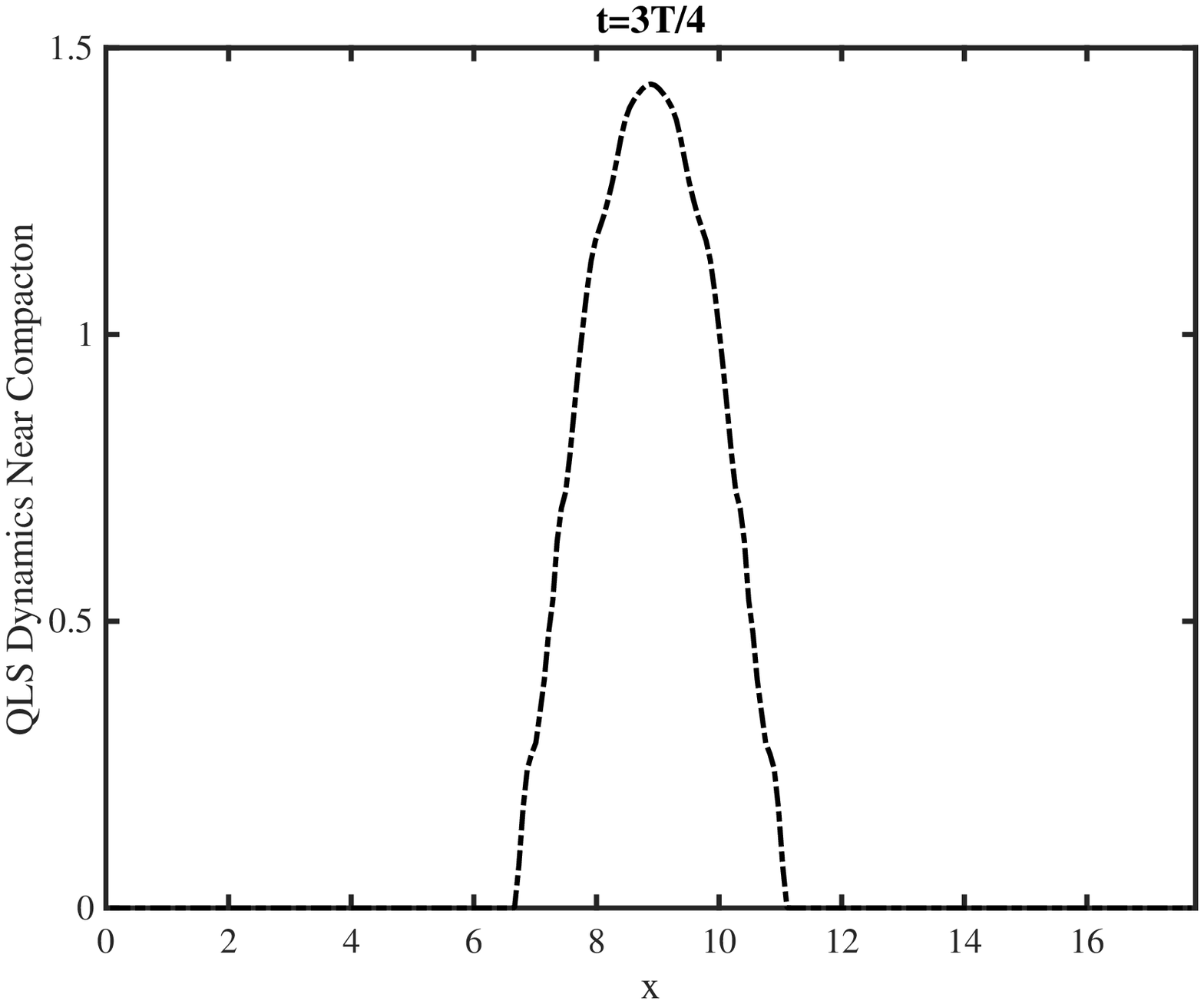} 
\includegraphics[width=.25\textwidth]{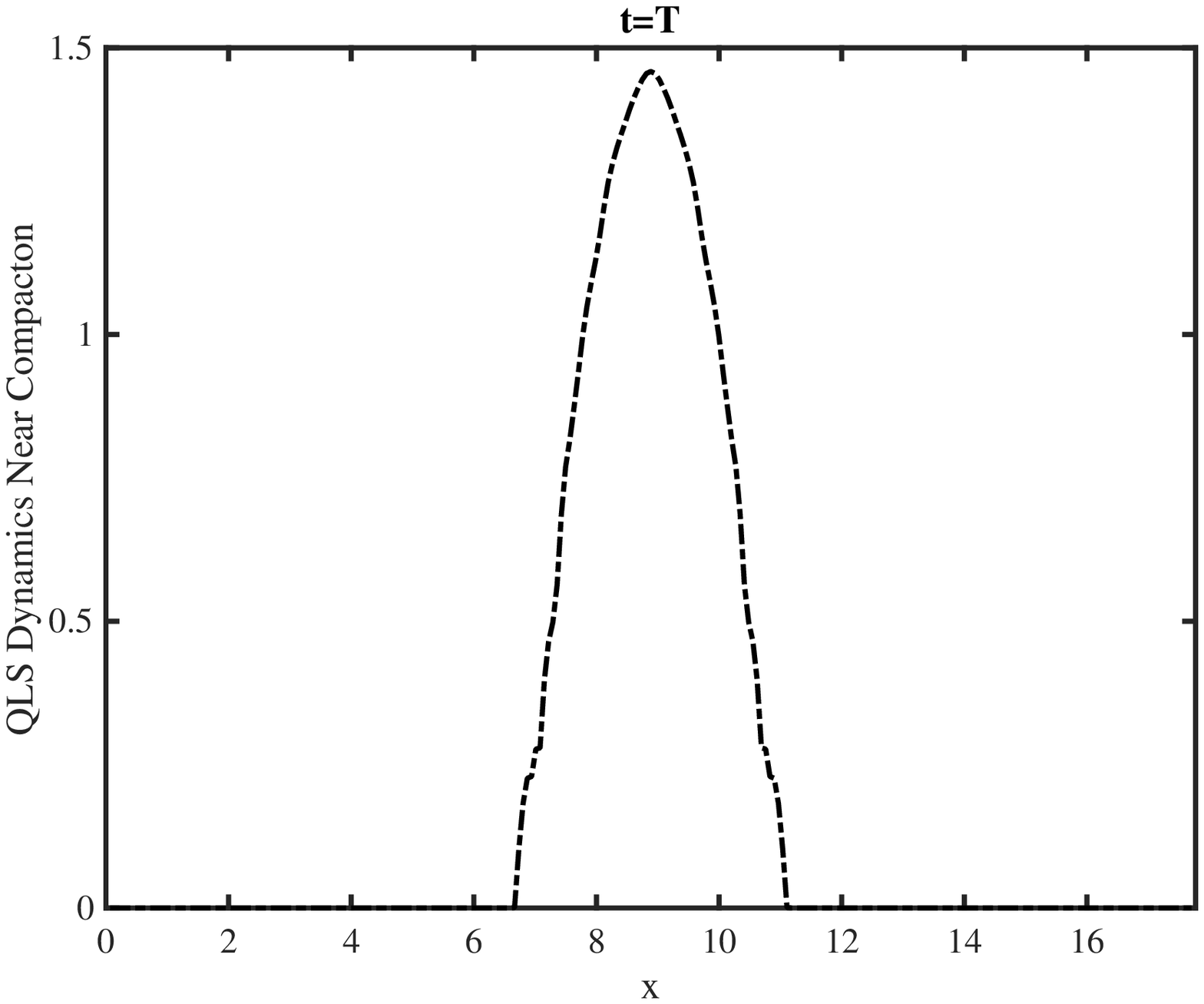} \\
\includegraphics[width=.4\textwidth]{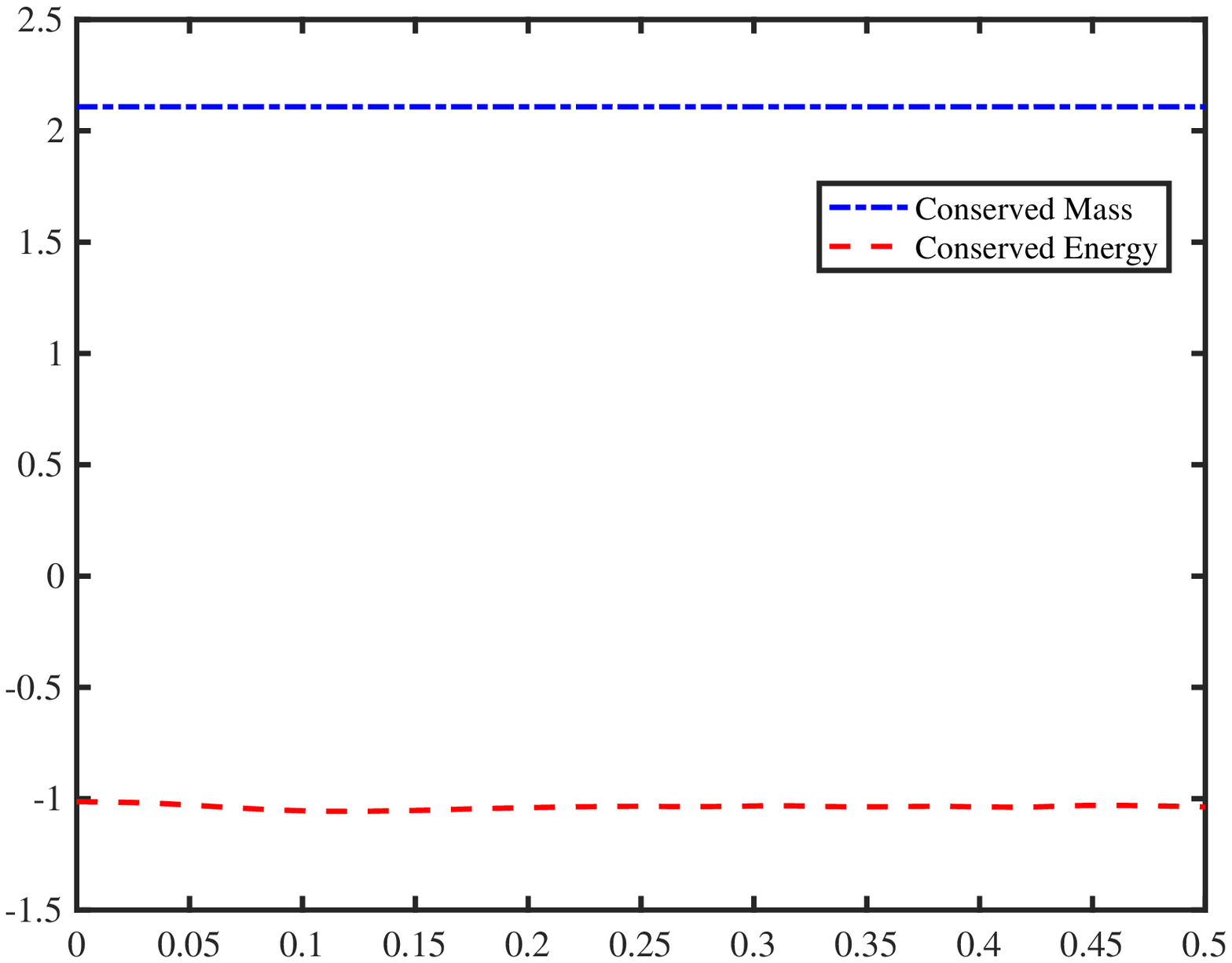} 
\caption{(Top) Time slices of the absolute value of numerical solutions to \eqref{QLS} at $t=0,T/4,T/2,3T/4,T$ with initial data of the form \eqref{pert:ansatz}.  Here, we have taken $2^8$ spatial grid points, $T = .5$, $f = .1 e^{-20 x^2}$.  (Bottom) Tracking the conserved Mass and Energy curves for the simulation.}\label{fig:pertfig}
\end{figure}

\section{Some preliminary estimates}\label{sect:Prelims}

Given \(\tau>0\) we define the Fourier multipliers
\[
\cC_\tau := \cosh(\tau D_y),\qquad \cS_\tau:=i\sinh(\tau D_y),\qquad \cT_\tau:=i\tanh(D_y).
\]
We observe that these multipliers map real-valued functions to real-valued functions and that \(\cC_\tau\) is symmetric whereas \(\cS_\tau,\cT_\tau\) are skew-symmetric. Further, we have the product rules
\begin{equation}\label{ProductRule}
\cC_\tau(fg) = \cC_\tau f\cdot \cC_\tau g - \cS_\tau f\cdot \cS_\tau g,\qquad \cS_\tau(fg) = \cS_\tau f\cdot \cC_\tau g + \cC_\tau f\cdot \cS_\tau g,
\end{equation}
which follow (for sufficiently smooth functions) from taking the Fourier transform and applying hyperbolic trigonometric identities.

Using Plancherel's Theorem, we have the following lemma:
\begin{lem}[An equivalent norm]
We have the estimate
\begin{equation}\label{EquivalentNorm}
\|f\|_{AH^s_\tau}\approx \|\cC_\tau f\|_{H^s},
\end{equation}
uniformly in \(s,\tau\).
\end{lem}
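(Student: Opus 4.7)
The plan is to work entirely on the Fourier side using Plancherel's theorem. Since $\widehat{e^{\pm\tau D_y} f}(\xi) = e^{\pm\tau\xi}\hat f(\xi)$ and $\widehat{\cC_\tau f}(\xi) = \cosh(\tau\xi)\hat f(\xi)$, both norms appearing in \eqref{EquivalentNorm} can be written as weighted $L^2$ norms of $\hat f$ with weight $\<\xi\>^{2s}$ multiplied by $e^{\pm 2\tau\xi}$ or by $\cosh^2(\tau\xi)$ respectively. The claim therefore reduces to a pointwise comparison of the symbols $e^{\pm\tau\xi}$ and $\cosh(\tau\xi)$ on $\R$.

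For the upper bound on $\|\cC_\tau f\|_{H^s}$, I would use the identity $\cosh(\tau\xi) = \tfrac12(e^{\tau\xi} + e^{-\tau\xi})$ together with the triangle inequality on the $H^s$-norm to obtain
\[
\|\cC_\tau f\|_{H^s} \leq \tfrac12\|e^{\tau D_y}f\|_{H^s} + \tfrac12\|e^{-\tau D_y}f\|_{H^s} = \tfrac12\|f\|_{AH^s_\tau}.
\]
For the reverse direction, since $e^{\tau\xi}, e^{-\tau\xi} \geq 0$, one has the elementary pointwise bound
\[
e^{\pm\tau\xi}\leq e^{\tau\xi} + e^{-\tau\xi} = 2\cosh(\tau\xi),
\]
valid for every $\xi\in\R$ and every $\tau>0$. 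Multiplying by $\<\xi\>^s|\hat f(\xi)|$, squaring, integrating in $\xi$, and applying Plancherel gives $\|e^{\pm\tau D_y}f\|_{H^s} \leq 2\|\cC_\tau f\|_{H^s}$, and summing over the two signs yields $\|f\|_{AH^s_\tau} \leq 4\|\cC_\tau f\|_{H^s}$. The constants $\tfrac12$ and $4$ are independent of $s$ and $\tau$, which produces the uniform equivalence claimed.

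There is no real obstacle here: the lemma is essentially a restatement of the identity $\cosh = \tfrac12(e^{\cdot} + e^{-\cdot})$ at the level of Fourier multipliers, and the main content is just recording that the norm $\|f\|_{AH^s_\tau}$ introduced via the vector $(e^{\tau D_y}f, e^{-\tau D_y}f)$ is interchangeable (up to absolute constants) with a single symmetric weighted Sobolev norm. This equivalence is convenient in the sequel because $\cC_\tau$ preserves real-valuedness and satisfies the product rule \eqref{ProductRule}, which will be needed for energy estimates.
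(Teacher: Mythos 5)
Your proof is correct and is essentially the paper's own argument: both directions reduce to the pointwise comparison of the symbols $e^{\pm\tau\xi}$ and $\cosh(\tau\xi)$ on the Fourier side. The paper phrases the lower bound as factoring $e^{\pm\tau D_y} = (1\mp i\cT_\tau)\cC_\tau$ and bounding $\|1\pm\tanh(\tau\cdot)\|_{L^\infty}\leq 2$, while you observe directly that $e^{\pm\tau\xi}\leq 2\cosh(\tau\xi)$; these are the same inequality written in two ways, and both yield the same constants $\tfrac12$ and $4$.
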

\begin{proof}
Using that \(\cC_\tau = \frac12e^{\tau D_y} + \frac12e^{-\tau D_y}\) we have
\[
\|\cC_\tau f\|_{H^s}\leq \tfrac12 \|f\|_{AH^s_\tau}.
\]
Conversely, we observe that
\(
e^{\pm\tau D_y} = (1 \mp  i\cT_\tau)\cC_\tau
\)
and by Plancherel's Theorem we have
\[
\|(1 \mp i\cT_\tau)\|_{H^s\rightarrow H^s}\leq \|1 \pm \tanh(\cdot)\|_{L^\infty}\leq 2.
\]
Consequently, we may bound
\[
\|f\|_{AH^s}\leq 4\|\cC_\tau f\|_{H^s}.
\]
\end{proof}

We take \(\varphi\in \Test\) to be an even function, identically \(1\) on \([-1,1]\) and supported in \((-2,2)\). We define the Littlewood-Paley projection \(P_0 = \varphi(D_y)\) and for \(j\geq 1\) we define \(P_j = \varphi(2^{-j}D_y) - \varphi(2^{1-j}D_y)\). We then have the following Sobolev-type estimate:
\begin{lem}
If \(s>0\) we have the estimate
\begin{equation}\label{Z-Sobolev}
\|f\|_{L^\infty}\lesssim \|P_0f\|_{L^\infty} + \|f_y\|_{H^{s-\frac12}},
\end{equation}
and identical bounds hold with \(L^\infty\), \(H^{s-\frac12}\) replaced by \(AL^\infty_\tau\), \(H^{s-\frac12}_\tau\), uniformly in \(\tau\).
\end{lem}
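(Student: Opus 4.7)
The plan is to reduce the $L^\infty$ estimate to a frequency-localized one via a standard Littlewood-Paley decomposition. Decompose
\[
f = P_0 f + \sum_{j\geq 1} P_j f,
\]
so that $\|f\|_{L^\infty} \leq \|P_0 f\|_{L^\infty} + \sum_{j\geq 1}\|P_j f\|_{L^\infty}$. Only the high frequency sum needs genuine work.

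For each $j\geq 1$, I will combine Bernstein's inequality with the fact that on the frequency support of $P_j$ the derivative $\partial_y$ is invertible. Specifically, Bernstein gives $\|P_j f\|_{L^\infty}\lesssim 2^{j/2}\|P_j f\|_{L^2}$, while the frequency localization $|\xi|\approx 2^j$ on $\supp \widehat{P_j f}$ gives $\|P_j f\|_{L^2}\approx 2^{-j}\|P_j f_y\|_{L^2}$. Combining these yields
\[
\|P_j f\|_{L^\infty}\lesssim 2^{-j/2}\|P_j f_y\|_{L^2}.
\]
Summing in $j\geq 1$ and applying Cauchy-Schwarz with the weights $2^{j(s-\frac12)}$ gives
\[
\sum_{j\geq 1}\|P_j f\|_{L^\infty}\leq \Bigl(\sum_{j\geq 1}2^{-2js}\Bigr)^{\!1/2}\Bigl(\sum_{j\geq 1}2^{2j(s-\frac12)}\|P_j f_y\|_{L^2}^2\Bigr)^{\!1/2}\lesssim \|f_y\|_{H^{s-\frac12}},
\]
where convergence of the first factor uses $s>0$. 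This yields the desired estimate.

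For the analytic versions, the key observation is that the Fourier multipliers $e^{\pm\tau D_y}$ commute with the Littlewood-Paley projectors $P_j$. Thus, applying the already-proved inequality to $e^{\pm\tau D_y} f$ in place of $f$ and summing the two resulting inequalities yields
\[
\|f\|_{AL^\infty_\tau}\lesssim \|P_0 f\|_{AL^\infty_\tau} + \|f_y\|_{AH^{s-\frac12}_\tau},
\]
with the implicit constant independent of $\tau$, since $\tau$ enters only through the outer multiplier $e^{\pm\tau D_y}$ and never through the Bernstein/Cauchy-Schwarz constants. No step here presents a substantive obstacle; the only point to watch is that $s>0$ is genuinely needed to ensure the geometric series $\sum 2^{-2js}$ converges, which is why the result fails at the endpoint $s=0$.
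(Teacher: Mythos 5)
Your proof is correct and follows essentially the same route as the paper: Littlewood-Paley decomposition, Bernstein's inequality combined with the invertibility of $\partial_y$ on each annulus to reduce to $f_y$, summation of the geometric tail using $s>0$, and extension to the analytic scale by applying the scalar estimate to $e^{\pm\tau D_y}f$. The only cosmetic difference is that you sum via Cauchy--Schwarz while the paper uses the cruder bound $\|P_j f_y\|_{H^{s-1/2}}\leq \|f_y\|_{H^{s-1/2}}$ and a geometric series; both are valid.
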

\begin{proof}
We decompose by frequency and then apply Bernstein's inequality to bound
\[
\|f\|_{L^\infty}\leq \sum\limits_{j=0}^\infty\|P_j f\|_{L^\infty} \lesssim \|P_0f\|_{L^\infty} + \sum\limits_{j=1}^\infty2^{-sj}\|P_j f_y\|_{H^{s-\frac12}}\lesssim\|P_0f\|_{L^\infty} + \|f_y\|_{H^{s-\frac12}}.
\]
Replacing \(f\) by \(e^{\pm \tau D_y}f\), we obtain the corresponding bound with \(L^\infty\), \(H^{s-\frac12}\) replaced by \(AL^\infty_\tau\), \(AH^{s-\frac12}_\tau\) respectively.
\end{proof}

\begin{lem}
For any \(\tau>0\) and \(1\leq p\leq\infty\) we have the estimate
\begin{equation}\label{C-inverse}
\|\cC_\tau^{-1}\|_{L^p\to L^p} \leq 1.
\end{equation}
\end{lem}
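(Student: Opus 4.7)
The plan is to realize $\cC_\tau^{-1} = \operatorname{sech}(\tau D_y)$ as a convolution operator with a positive kernel of unit mass, and then invoke Young's inequality. Since $\operatorname{sech}$ is a bounded, smooth, rapidly decaying multiplier, writing
\[
\cC_\tau^{-1} f (x) = (K_\tau * f)(x), \qquad K_\tau(z) := \frac{1}{2\pi}\int_\R \operatorname{sech}(\tau\xi) e^{iz\xi}\, d\xi,
\]
is justified for Schwartz $f$ and extends by density and duality to all $L^p$ once the claim on $K_\tau$ is established.

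The key step is the explicit identification of $K_\tau$. I would recall (or verify by a contour integration closing in the upper half-plane and summing residues at the poles $\xi = i(k+\tfrac12)\pi/\tau$, $k\geq 0$) the standard Fourier transform identity
\[
\int_\R \operatorname{sech}(ax) e^{-ix\xi}\, dx = \frac{\pi}{a}\operatorname{sech}\!\left(\frac{\pi\xi}{2a}\right), \qquad a>0,
\]
and specialize to $a=\pi/(2\tau)$ to obtain $\int \operatorname{sech}(\pi x/(2\tau)) e^{-ix\xi}\,dx = 2\tau\operatorname{sech}(\tau\xi)$. Fourier inversion then yields the clean formula
\[
K_\tau(z) = \frac{1}{2\tau}\operatorname{sech}\!\left(\frac{\pi z}{2\tau}\right).
\]
This is manifestly positive; moreover, a direct integration (or simply evaluating the Fourier transform at $\xi=0$) gives $\|K_\tau\|_{L^1} = \int_\R K_\tau(z)\,dz = \operatorname{sech}(0) = 1$.

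With these two properties of $K_\tau$, Young's convolution inequality gives, for every $1\leq p\leq \infty$ and every $f\in L^p$,
\[
\|\cC_\tau^{-1} f\|_{L^p} = \|K_\tau * f\|_{L^p} \leq \|K_\tau\|_{L^1}\|f\|_{L^p} = \|f\|_{L^p},
\]
which is the desired bound \eqref{C-inverse}. There is essentially no obstacle here beyond being careful with the multiplier normalization used in the paper (the extra $\frac{1}{\sqrt{2\pi}}$ factors in $\hat f$ cancel in the definition $m(D_y)$, so the convolution kernel is indeed $\frac{1}{2\pi}\int m(\xi)e^{iz\xi}\,d\xi$); everything else is a direct consequence of the positivity of $\operatorname{sech}$ on the real line together with the well-known self-duality of $\operatorname{sech}$ under the Fourier transform.
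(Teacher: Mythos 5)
Your proof is correct and takes essentially the same approach as the paper: identify the kernel of $\cC_\tau^{-1}$ as $\frac{1}{2\tau}\sech(\frac{\pi y}{2\tau})$, note that it is positive with unit $L^1$ mass, and conclude via Young's inequality. The extra detail you provide (contour integration, the self-duality of $\sech$, care with the multiplier normalization) fills in exactly the computation the paper leaves implicit.
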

\begin{proof}
We compute that the kernel of \(\cC_\tau^{-1}\) is given by \(K(y) = \frac1{2\tau}\sech(\frac{\pi y}{2\tau})\) and hence \(\|K\|_{L^1_y} = 1\). The estimate \eqref{C-inverse} then follows from Young's inequality.
\end{proof}

We will also require the following technical estimate:
\begin{lem}\label{lem:LinearGrowth}
We have the estimate
\begin{equation}\label{LinearGrowth}
\|\cC_\tau f - f\|_{L^\infty}\lesssim \tau \|f_y\|_{AL^\infty_\tau},
\end{equation}
uniformly for \(0<\tau\leq 1\).
\end{lem}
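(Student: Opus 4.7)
The plan is to reduce the estimate to the two one-sided bounds
\[
\|(e^{\tau D_y} - \mathrm I)f\|_{L^\infty} + \|(e^{-\tau D_y} - \mathrm I)f\|_{L^\infty}\lesssim \tau\|f_y\|_{AL^\infty_\tau},
\]
using the decomposition \(\cC_\tau = \tfrac12(e^{\tau D_y} + e^{-\tau D_y})\). Since each of these multipliers commutes with \(\partial_y\), the fundamental theorem of calculus gives
\[
(e^{\pm\tau D_y} - \mathrm I)f = \pm\int_0^\tau \partial_s\bigl(e^{\pm s D_y} f\bigr)\,ds = \mp i\int_0^\tau e^{\pm sD_y} f_y\,ds,
\]
using \(D_y = -i\partial_y\). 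Taking the \(L^\infty\) norm and pulling the integral outside reduces the problem to showing
\begin{equation}\label{bdd-interp}
\|e^{\pm sD_y} g\|_{L^\infty}\leq \|g\|_{AL^\infty_\tau}\qquad\text{for all }s\in[0,\tau],
\end{equation}
with \(g = f_y\).

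The estimate \eqref{bdd-interp} is where the main (though mild) obstacle lies, since one must justify that the operators \(e^{\pm sD_y}\) act sensibly on the class \(AL^\infty_\tau\). The cleanest approach is to interpret \(e^{sD_y}g(y) = g(y-is)\): the hypothesis \(g\in AL^\infty_\tau\) says exactly that \(g\) extends to a bounded analytic function on the strip \(\{|\Im z|\leq \tau\}\), with boundary traces \(e^{\pm\tau D_y}g\). The Hadamard three-lines theorem applied to the analytic function \(s\mapsto g(y - is)\) on the strip \(-\tau\leq \Re s\leq \tau\) shows that \(M(s):=\sup_{y\in\R}|g(y-is)|\) is log-convex on \([-\tau,\tau]\), hence
\[
M(s)\leq M(-\tau)^{(\tau-s)/(2\tau)}M(\tau)^{(\tau+s)/(2\tau)}\leq M(\tau) + M(-\tau) = \|g\|_{AL^\infty_\tau}
\]
for \(s\in[-\tau,\tau]\), which yields \eqref{bdd-interp}. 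For an ad hoc \(g\) not a priori bounded analytic in the strip, one first proves the inequality for Schwartz functions (where everything is elementary) and passes to the limit by density/weak-\(\ast\) compactness, using only that \(e^{\pm\tau D_y}g\in L^\infty\).

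Combining these pieces,
\[
\|\cC_\tau f - f\|_{L^\infty}\leq \tfrac12\int_0^\tau \bigl(\|e^{sD_y}f_y\|_{L^\infty} + \|e^{-sD_y}f_y\|_{L^\infty}\bigr)\,ds\leq \tau\|f_y\|_{AL^\infty_\tau},
\]
uniformly in \(\tau\in(0,1]\), which is the desired estimate \eqref{LinearGrowth}. Note the factor of \(\tau\) is sharp and comes purely from the length of the interval of integration, while the uniformity in \(\tau\) is built into the three-lines bound.
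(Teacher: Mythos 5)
Your proof is correct, but it takes a genuinely different route from the paper's. The paper performs a Littlewood--Paley split at frequency $\sim \tau^{-1}$ (namely $J=\lfloor-\ln\tau\rfloor$): for the high-frequency piece, Bernstein's inequality $\|P_jh\|_{L^\infty}\lesssim 2^{-j}\|P_jh_y\|_{L^\infty}$ summed over $j>J$ produces the factor $\tau$, together with the bound $\|\cC_\tau^{-1}\|_{L^\infty\to L^\infty}\leq 1$; for the low-frequency piece, the operator $(\cC_\tau^{-1}-1)P_{\leq J}$ is written as convolution by $K'$ where $K$ is a Schwartz kernel with $\|K\|_{L^1}\lesssim\tau$, and Young's inequality finishes. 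Your argument instead extracts the factor $\tau$ from the fundamental theorem of calculus, $(e^{\pm\tau D_y}-\mathrm I)f=\mp i\int_0^\tau e^{\pm sD_y}f_y\,ds$, and then controls the integrand uniformly on $|s|\leq\tau$ by the three-lines/Phragm\'en--Lindel\"of maximum principle on the strip $\{|\Im z|\leq\tau\}$. This is shorter, conceptually transparent, and gives a sharp constant with no implicit losses; the trade-off is that you invoke a complex-analytic maximum principle and must run a density/regularization argument (e.g.\ via band-limited truncations $P_{\leq n}f$, combined with uniform $L^\infty$-boundedness of $P_{\leq n}$ and weak-$\ast$ lower semicontinuity of the $L^\infty$ norm) to justify both the FTC identity and the analytic extension for a general $f$ in the class, which you correctly flag. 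The paper's proof stays entirely within real-variable harmonic analysis and sidesteps any a priori analytic extension. One small sign slip: the intermediate display should read $\int_0^\tau\partial_s\bigl(e^{\pm sD_y}f\bigr)\,ds$ without the leading $\pm$; the final expression $\mp i\int_0^\tau e^{\pm sD_y}f_y\,ds$ is correct.
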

\begin{proof}
Let \(J = \lfloor -\ln \tau\rfloor\). For high frequencies we bound
\[
\|P_{>J}\cC_\tau f\|_{L^\infty}\lesssim \sum\limits_{j>J} 2^{-j}\|P_j\cC_\tau f_y\|_{L^\infty}\lesssim \tau \|f_y \|_{AL^\infty_\tau}.
\]
Further, from the estimate \eqref{C-inverse} we have
\[
\|P_{>J}f\|_{L^\infty}\lesssim \|P_{>J}\cC_\tau f\|_{L^\infty}\lesssim \tau \|f_y\|_{AL^\infty_\tau}.
\]

For low frequencies, we observe that the kernel of the operator \((\cC_\tau^{-1} - 1)P_{\leq J}\) is \(K'(y)\) where
\[
K(y) = \frac1{2\pi}\int \frac{\sech(\tau\xi) - 1}{i\xi}\varphi(2^{-J}\xi)e^{iy\xi}\,d\xi
\]
is a Schwartz function satisfying \(\|K\|_{L^1}\lesssim\tau\). Consequently, we may apply Young's inequality to obtain the estimate
\[
\|P_{\leq J}\cC_\tau f - P_{\leq J}f\|_{L^\infty}\lesssim \tau\|\cC_\tau f_y\|_{L^\infty}\lesssim \tau\|f_y\|_{AL^\infty_\tau}.
\]
Combining these bounds we obtain the estimate \eqref{LinearGrowth}.
\end{proof}

Our linear estimates will take advantage of the smoothing effect gained from allowing the radius of analyticity to shrink. Observing that
\begin{equation}\label{TimeDerivative}
\frac d{dt}\cC_\tau = -\dot \tau \partial_y\cS_\tau,
\end{equation}
we are motivated to prove the following:
\begin{lem}
We have the estimate
\begin{equation}\label{SmoothingEffect}
\|\cC_\tau f\|_{H^{\frac12}}^2 \lesssim - \Re\left\<\partial_y \cS_\tau f,\cC_\tau f\right\> + \tfrac1\tau \|\cC_\tau f\|_{L^2}^2,
\end{equation}
uniformly for \(0<\tau\leq 1\).
\end{lem}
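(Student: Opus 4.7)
The plan is to reduce everything to a pointwise inequality on the Fourier side via Plancherel. Since $\cC_\tau$ and $\partial_y\cS_\tau$ are Fourier multipliers with real symbols $\cosh(\tau\xi)$ and $-\xi\sinh(\tau\xi)$ respectively (here I am using $D_y=-i\partial_y$, so $\partial_y\cS_\tau$ has symbol $i\xi\cdot i\sinh(\tau\xi)=-\xi\sinh(\tau\xi)$), Parseval gives
\[
-\Re\<\partial_y\cS_\tau f,\cC_\tau f\> = \int \xi\sinh(\tau\xi)\cosh(\tau\xi)\,|\hat f(\xi)|^2\,d\xi,
\]
which is automatically nonnegative since $\xi$ and $\sinh(\tau\xi)$ share a sign. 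Similarly $\|\cC_\tau f\|_{H^{1/2}}^2=\int\<\xi\>\cosh^2(\tau\xi)|\hat f|^2\,d\xi$ and $\|\cC_\tau f\|_{L^2}^2=\int \cosh^2(\tau\xi)|\hat f|^2\,d\xi$.

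Dividing through by the nonnegative weight $\cosh^2(\tau\xi)$, the desired inequality \eqref{SmoothingEffect} reduces to establishing the pointwise symbol bound
\[
\<\xi\> \lesssim \xi\tanh(\tau\xi) + \tfrac1\tau,
\]
uniformly in $0<\tau\le 1$ and $\xi\in\R$. Note that $\xi\tanh(\tau\xi)=|\xi|\,|\tanh(\tau\xi)|\ge 0$, which is what lets us absorb the weight.

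I would verify this inequality by splitting into the two natural regimes. When $|\tau\xi|\ge 1$, the hypothesis $\tau\le 1$ forces $|\xi|\ge 1/\tau\ge 1$, so $\<\xi\>\lesssim|\xi|$ and $\xi\tanh(\tau\xi)\ge \tanh(1)\,|\xi|$, giving $\<\xi\>\lesssim \xi\tanh(\tau\xi)$. When $|\tau\xi|<1$, we have $|\xi|<1/\tau$ and therefore $\<\xi\>\le 1+|\xi|<1+1/\tau\lesssim 1/\tau$, again using $\tau\le 1$. Combining the two regimes yields the pointwise bound and hence the lemma.

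There is no real obstacle here; the only point to be careful about is that the first term on the right of \eqref{SmoothingEffect} is automatically nonnegative (so the inequality is not vacuous) and that the constants in the two regime splits are controlled uniformly in $\tau\in(0,1]$. The $\tau\le 1$ restriction is used in exactly one place, namely to ensure that $1/\tau$ dominates $1$ in the low-frequency regime.
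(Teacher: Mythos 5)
Your proof is correct and follows the same route as the paper's: apply Plancherel to express the smoothing term as $\int\xi\tanh(\tau\xi)|\cosh(\tau\xi)\hat f(\xi)|^2\,d\xi$ and then reduce to the pointwise symbol bound $\<\xi\>\lesssim\xi\tanh(\tau\xi)+\tfrac1\tau$ uniformly in $0<\tau\le1$. The only difference is that the paper states this pointwise bound without elaboration, whereas you carry out the two-regime verification ($|\tau\xi|\ge1$ versus $|\tau\xi|<1$) explicitly.
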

\begin{proof}
Using Plancherel's Theorem we have
\[
- \Re\left\<\partial_y \cS_\tau f,\cC_\tau f\right\> = \int \xi \tanh(\tau \xi)|\cosh(\tau \xi)\hat f(\xi)|^2\,d\xi.
\]
The estimate \eqref{SmoothingEffect} then follows from the fact that
\[
\<\xi\> \lesssim \xi\tanh(\tau \xi) + \tfrac1\tau,
\]
where the constant can be chosen independently of \(\tau\).
\end{proof}

We define the low-high and high-high paraproduct operators to be
\[
\bT_fg := \sum\limits_{j\geq 4}P_{\leq j-4}f\cdot P_jg,\qquad \bPi[f,g] := fg - \bT_fg - \bT_gf.
\]
We then have the following nonlinear estimates, the proof of which is delayed to Appendix~\ref{app:Multilinear}:
\begin{prop}\label{prop:NonlinearEstimates}
We have the following estimates:
\begin{enumerate}
\item {Symmetric product bounds.} If \(s\geq 0\) then
\begin{align}
\|fg_y\|_{H^s}&\lesssim \|f\|_{L^\infty}\|g_y\|_{H^s} + \|f_y\|_{H^s}\|g\|_{L^\infty},\label{Symm-1}\\
\|fg\|_{Z^s}&\lesssim \|f\|_{L^\infty}\|g\|_{Z^s} + \|f\|_{Z^s}\|g\|_{L^\infty},\label{Symm-2}
\end{align}
\item {Asymmetric product bounds.} If \(s\in \R\) then
\begin{align}
\|\bT_f g_y\|_{H^{s-\frac12}} &\lesssim \|f\|_{L^\infty}\|g_y\|_{H^{s-\frac12}},\label{Asym-1}
\end{align}
if \(0\leq s \leq 1\) then
\begin{align}
\|f g_y - \bT_fg_y\|_{H^s} &\lesssim \|f\|_{W^{1,\infty}}\|g\|_{H^s},\label{Asym-2}
\end{align}
and if \(0 < s\leq \frac12\) then
\begin{align}
\|fg_y\|_{H^{s-\frac12}} &\lesssim \|f\|_{Z^0}\|g_y\|_{H^{s-\frac12}}.\label{Asym-3}
\end{align}
\item {Trilinear bounds.} If \(s\geq 0\) then
\begin{align}
\|fgh\|_{H^s} &\lesssim \|f\|_{H^{s + \frac12}}\|g\|_{H^{\frac12}}\|h\|_{L^2} + \|f\|_{H^{\frac12}}\|g\|_{L^2}\|h\|_{H^{s + \frac12}} + \|f\|_{L^2}\|g\|_{H^{s + \frac12}}\|h\|_{H^{\frac12}},\label{Trilinear}\\
\|fgh\|_{H^s} &\lesssim \|f\|_{H^{s + \frac12}}\|g\|_{L^2}\|h\|_{L^\infty} + \|f\|_{L^2}\|g\|_{H^{s + \frac12}}\|h\|_{L^\infty} + \|f\|_{L^2}\|g\|_{L^2}\|h_y\|_{H^s}.\label{Trilinear-2}
\end{align}
\item {Commutator bounds.} If \(-\frac12<s\leq 1\) and \(j\geq 4\) then
\begin{align}
\|[\<D_y\>^s,f]g_y\|_{L^2}&\lesssim \|f_y\|_{Z^0}\|g\|_{H^s},\label{Comm}\\
\|[P_{\leq j},f]g_y\|_{L^\infty} &\lesssim \|f_y\|_{L^\infty}\|g\|_{L^\infty}\label{Comm-2}
\end{align}

\end{enumerate}
In all cases, identical bounds hold with \(H^s\) replaced by \(AH^s_\tau\), etc., uniformly in \(\tau\).
\end{prop}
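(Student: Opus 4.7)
The plan is to prove all the estimates by a uniform application of the Bony paraproduct decomposition together with Bernstein's inequality and standard commutator manipulations, and to deduce the analytic-weighted versions from a single multiplicativity observation for $e^{\tau D_y}$.

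The reduction to the unweighted case is as follows. Since the symbol of $e^{\tau D_y}$ is the exponential $e^{\tau\xi}$, the convolution formula for the Fourier transform yields
\[
e^{\pm\tau D_y}(fg) = (e^{\pm\tau D_y}f)(e^{\pm\tau D_y}g),
\]
and similarly for trilinear expressions, whenever $f,g$ lie in $AX_\tau$ with $X$ one of the spaces appearing in the proposition. Consequently any bilinear (or trilinear) non-weighted estimate $\|F(f,g)\|_X \lesssim \|f\|_{Y_1}\|g\|_{Y_2}$ upgrades to its analytic counterpart by applying the estimate separately to $(e^{\tau D_y}f, e^{\tau D_y}g)$ and to $(e^{-\tau D_y}f, e^{-\tau D_y}g)$, and then summing. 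The commutator estimates also fall under this scheme, since $e^{\pm\tau D_y}$ commutes with every Fourier multiplier. It therefore suffices to establish the unweighted estimates.

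For the bilinear and trilinear bounds I would decompose $fg = \bT_f g + \bT_g f + \bPi[f, g]$ (and similarly for triple products). The low-high paraproduct $\bT_f g$ is Fourier-localized near the dyadic frequency of $g$, so Bernstein and square summation immediately yield $\|\bT_f g\|_{H^\sigma} \lesssim \|f\|_{L^\infty}\|g\|_{H^\sigma}$ for every $\sigma\in\R$; this single bound produces \eqref{Asym-1} and one half of \eqref{Symm-1}. The remaining pieces of \eqref{Symm-1}, \eqref{Asym-2} and the high-high remainder in \eqref{Symm-1}--\eqref{Symm-2} use the Bernstein gain $\|P_{\leq j-4} g_y\|_{L^\infty} \lesssim 2^j\|g\|_{L^\infty}$, which transfers a derivative from $g$ onto $f$ at the cost of one Sobolev level and yields the desired $\|f_y\|_{H^s}\|g\|_{L^\infty}$ contribution. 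The Zhidkov-space bound \eqref{Symm-2} is then obtained by writing $\|fg\|_{Z^s} = \|fg\|_{L^\infty} + \|(fg)_y\|_{H^{s-\frac12}}$ and reducing each piece to the previous Sobolev bounds, while \eqref{Asym-3} requires in addition the embedding $Z^0\hookrightarrow L^\infty$ coming from \eqref{Z-Sobolev}, which compensates for the fact that $H^{s-\frac12}$ does not embed in $L^\infty$ when $s<\tfrac12$. The trilinear bounds \eqref{Trilinear}, \eqref{Trilinear-2} follow by grouping two of the three factors with a paraproduct and iterating.

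The commutator estimates are the main technical points. For \eqref{Comm} I would split the commutator according to the paraproduct decomposition of $fg_y$ and exploit the pointwise bound $|\langle\xi\rangle^s - \langle\eta\rangle^s| \lesssim |\xi-\eta|(\langle\xi\rangle^{s-1} + \langle\eta\rangle^{s-1})$, valid for $s\leq 1$, to see that each piece effectively loses one derivative off of $g_y$ but gains one from this mean-value estimate; the restriction $s > -\tfrac12$ appears in the frequency summation for the paraproduct term where the $Z^0$ control of $f_y$ must absorb a factor of $2^{(s+\frac12)k}$. Estimate \eqref{Comm-2} is elementary: the Schwartz kernel of $[P_{\leq j}, f]$ differs from that of $P_{\leq j}$ by the factor $(f(y')-f(y))$, and the mean value bound $|f(y')-f(y)|\leq |y-y'|\|f_y\|_{L^\infty}$ produces an operator of order $-1$ that exactly compensates the derivative falling on $g_y$. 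The main obstacle throughout is not any single delicate estimate but the bookkeeping required to verify each case, which is why the detailed argument is deferred to Appendix~\ref{app:Multilinear}.
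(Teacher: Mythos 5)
Your reduction of the analytic-weighted estimates to the unweighted ones via the multiplicativity $e^{\pm\tau D_y}(fg) = (e^{\pm\tau D_y}f)(e^{\pm\tau D_y}g)$ is correct and is exactly the mechanism behind the paper's product rule \eqref{ProductRule} for $\cC_\tau,\cS_\tau$; this part aligns with the paper. Your paraproduct treatment of \eqref{Symm-1}, \eqref{Asym-1}, \eqref{Asym-2} and the commutator bounds \eqref{Comm}, \eqref{Comm-2} is also the same in spirit as the paper's, though the paper argues the commutator bounds via a kernel estimate and explicit integration by parts against the low-/high-frequency split of $g_y$ rather than via the symbol mean-value inequality; either route works.

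The genuine gap is in the trilinear estimates \eqref{Trilinear}, \eqref{Trilinear-2}. You claim they ``follow by grouping two of the three factors with a paraproduct and iterating,'' but this does not survive scrutiny at the endpoint. Take $s=0$ in \eqref{Trilinear}: you would need $\|fgh\|_{L^2}\lesssim\|f\|_{H^{1/2}}\|g\|_{H^{1/2}}\|h\|_{L^2}+\text{(cyc.)}$, and in the regime where $h$ carries the highest frequency any two-step approach must first control $fg$ in a space that pairs against $L^2$ of a high-frequency function; the natural candidate is $L^\infty$ (or $\BMO$ via Carleson), but $H^{1/2}(\R)\cdot H^{1/2}(\R)\not\subset L^\infty$, and the needed bilinear bound $\|fg\|_{\BMO}\lesssim\|f\|_{H^{1/2}}\|g\|_{H^{1/2}}$ is not available. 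The paper instead argues by duality with a full quadrilinear Littlewood--Paley decomposition $\sum_{j_1\le j_2\le j_3,\,j_4}\langle P_{j_1}f\cdot P_{j_2}g\cdot P_{j_3}h, P_{j_4}\phi\rangle$; the summable gain $2^{\frac12(j_1-j_3)}$ comes from Bernstein applied to the \emph{lowest} frequency while the derivatives sit on the two comparable high frequencies. That structural gain, essential for convergence, is irretrievably lost once you first collapse two of the factors into a single function, so the trilinear bounds require the direct quadrilinear argument and cannot be reduced to iterated bilinear paraproducts. Relatedly, your sketch of \eqref{Symm-2} and \eqref{Asym-3} glosses over the key technical step: the high-high remainder must be split as $\bPi[f,P_0g]+\bPi[f,P_{>0}g]$ (and $\bPi[P_0 f,\cdot]+\bPi[P_{>0}f,\cdot]$ for \eqref{Asym-3}), because \eqref{HH-1} cannot be applied directly to the piece whose factor is controlled only in $L^\infty$ rather than $L^2$; when $s-\frac12\le 0$ the $P_{>0}$ piece is then handled by Bernstein at the output frequency through $L^1$. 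These low/high splits are load-bearing and should be stated explicitly.
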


\section{Linear estimates}\label{sect:Linear}
In this section we prove a priori estimates for model equations that will subsequently be applied to obtain bounds for \(U\) and \(W\).

We first consider estimates for solutions \(\bz\colon [0,T]\times \R\rightarrow \C\) of the (regularized) linear Schr\"odinger equation
\begin{equation}\label{ToyModel}
i\left(\bz_t + P_{\leq j}(\bB \bz_{\leq j,y})\right)  = P_{\leq j}\bz_{\leq j,yy} + \bff + \bg,
\end{equation}
where we write \(\bz_{\leq j} = P_{\leq j}\bz\) and assume that the coefficient \(\bB\) is real-valued (and not necessarily defined by \eqref{b-def}). We then have the following proposition:
\begin{prop}\label{prop:ModelAP}
Let \(-\frac12< s\leq 1\) and \(0<\delta\ll1\) be a sufficiently small constant. Given \(0<\tau_0\leq 1\), \(M >0\) and \(0<T\leq \frac\delta{2M}\) define
\begin{equation}\label{tau-def}
\tau(t) := \tau_0\left(1 - \tfrac M\delta t\right)\qtq{for}0\leq t\leq T.
\end{equation}
Suppose that for almost every \(t\in(0,T)\) the function \(b(t)\in L^\infty\) satisfies
\begin{align}\label{coefficient-ests}
\bB(t,0) = 0\qtq{and}\|\bB_y(t)\|_{AZ_\tau^0}\leq M.
\end{align}
Let \(\bz\in \Cont([0,T];AH^s_\tau)\) and suppose that for almost every \(t\in(0,T)\) we have \(\bz_t,\bz_{yy}\in AH^s_\tau\) and \(\bz\) satisfies \eqref{ToyModel} with initial data \(\bz(0) = \bz_0\).
Then we have the a priori estimate
\begin{equation}\label{ModelAP}
\sup\limits_{t\in[0,T]}\|\bz\|_{AH^s_\tau} + \sqrt{M\tau_0}\|\bz\|_{L^2_TAH^{s+\frac12}_\tau}\lesssim \|\bz_0\|_{AH^s_{\tau_0}} + \|\bff\|_{L^1_TAH^s_\tau} + \tfrac 1 {\sqrt{M\tau_0}}\|\bg\|_{L^2_TAH^{s-\frac12}_\tau},
\end{equation}
where the implicit depends only on \(\delta\), \(s\).
\end{prop}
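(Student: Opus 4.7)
The plan is to establish \eqref{ModelAP} via an energy estimate in the equivalent norm $\|\cC_\tau\<D_y\>^s\bz\|_{L^2}$ from \eqref{EquivalentNorm}, with the essential coercive gain coming from the strictly decreasing radius of analyticity. I will set $E(t) := \|\cC_\tau\<D_y\>^s\bz\|_{L^2}^2$ and $\tilde\bz := \cC_\tau\<D_y\>^s\bz$, then differentiate $E$ in time and use the identity \eqref{TimeDerivative} to split $\tfrac{d}{dt}E = I_{\mathrm{rad}} + I_{\mathrm{eq}}$, where $I_{\mathrm{rad}} = -2\dot\tau\,\Re\<\partial_y\cS_\tau\<D_y\>^s\bz,\,\tilde\bz\>$ comes from the time-dependence of the weight. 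Since $\dot\tau = -M\tau_0/\delta < 0$ and $\tau(t) \geq \tau_0/2$ on $[0,T]$, applying \eqref{SmoothingEffect} with $f = \<D_y\>^s\bz$ (together with the equivalence of $\|\cC_\tau\<D_y\>^{s+1/2}\bz\|_{L^2}$ and $\|\bz\|_{AH^{s+1/2}_\tau}$) yields
\[
I_{\mathrm{rad}} \leq -\tfrac{cM\tau_0}{\delta}\|\bz\|_{AH^{s+\frac12}_\tau}^2 + \tfrac{CM}{\delta}\,E,
\]
and the first term on the right is the coercive half-derivative gain that will absorb both the transport and the $\bg$-forcing below.

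Next I would substitute the equation, $\bz_t = -P_{\leq j}(\bB P_{\leq j}\bz_y) - iP_{\leq j}^2\bz_{yy} - i(\bff+\bg)$, into $I_{\mathrm{eq}} = 2\Re\<\cC_\tau\<D_y\>^s\bz_t,\tilde\bz\>$ and dispose of the easy pieces. Since $P_{\leq j}$, $\cC_\tau$, $\<D_y\>^s$ are commuting Fourier multipliers, the Schr\"odinger piece reduces to $2\Re\<-iP_{\leq j}^2\tilde\bz_{yy},\tilde\bz\>$; the operator $P_{\leq j}^2\partial_y^2$ has a real symbol, so the pairing is real and the prefactor $-i$ kills the real part. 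For the forcings, $\<\cC_\tau\<D_y\>^s\bff,\tilde\bz\> \leq \|\bff\|_{AH^s_\tau}\sqrt{E}$ follows by Cauchy--Schwarz, while the $\bg$-contribution is bounded via $H^{-1/2}\times H^{1/2}$ duality by $\|\bg\|_{AH^{s-\frac12}_\tau}\,\|\bz\|_{AH^{s+\frac12}_\tau}$ and then absorbed into half the smoothing gain in $I_{\mathrm{rad}}$ via Young's inequality, leaving a residue $\lesssim \tfrac{\delta}{cM\tau_0}\|\bg\|_{AH^{s-\frac12}_\tau}^2$.

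The main work is the transport contribution, which I would treat by the decomposition
\[
\cC_\tau\<D_y\>^s(\bB P_{\leq j}\bz_y) = \bB\,(P_{\leq j}\tilde\bz)_y + [\cC_\tau\<D_y\>^s,\bB]\,P_{\leq j}\bz_y.
\]
For the leading piece, the reality of $\bB$ allows integration by parts: $-\Re\int \bB\,(P_{\leq j}\tilde\bz)_y\,\overline{P_{\leq j}\tilde\bz}\,dy = \tfrac12\int \bB_y\,|P_{\leq j}\tilde\bz|^2\,dy$, which is $\lesssim M\,E$ using only $\|\bB_y\|_{L^\infty}\leq \|\bB_y\|_{AZ^0_\tau}\leq M$. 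For the commutator I split $[\cC_\tau\<D_y\>^s,\bB] = \cC_\tau[\<D_y\>^s,\bB] + [\cC_\tau,\bB]\<D_y\>^s$, bounding the first summand by the $AL^2_\tau$-version of \eqref{Comm} and rewriting the second via the product rule \eqref{ProductRule} as $[\cC_\tau,\bB]v = (\cC_\tau\bB - \bB)\,\cC_\tau v - (\cS_\tau \bB)\,\cS_\tau v$. The key pointwise bounds $\|\cC_\tau\bB - \bB\|_{L^\infty}\lesssim \tau M$ (from \eqref{LinearGrowth}) and $\|\cS_\tau\bB\|_{L^\infty}\lesssim \tau M$ (from the integral representation $\cS_\tau = \int_0^\tau \cC_s\partial_y\,ds$, which follows from $\sinh(\tau D_y)=\int_0^\tau\cosh(sD_y)D_y\,ds$) then close the estimate. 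Both pointwise bounds depend only on $\bB_y$, not on $\bB$ itself, which is essential since $\bB(t,0)=0$ and $\|\bB_y\|_{AZ^0_\tau}\leq M$ only provide the linear growth $|\bB(t,y)|\leq M|y|$.

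Collecting everything I arrive at
\[
\tfrac{dE}{dt} + \tfrac{cM\tau_0}{2\delta}\|\bz\|_{AH^{s+\frac12}_\tau}^2 \leq \tfrac{C' M}{\delta}\,E + \|\bff\|_{AH^s_\tau}\sqrt{E} + \tfrac{\delta}{cM\tau_0}\|\bg\|_{AH^{s-\frac12}_\tau}^2.
\]
Since the hypothesis $T\leq\delta/(2M)$ makes the Gronwall factor $e^{C'MT/\delta}$ an $O(1)$ constant, a standard Gronwall-on-$\sqrt{E}$ argument delivers the supremum bound on $E$, while integrating the coercive smoothing term over $[0,T]$ yields the $\sqrt{M\tau_0}\,\|\bz\|_{L^2_T AH^{s+\frac12}_\tau}$ part of \eqref{ModelAP}. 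The main obstacle is the transport term: without writing every commutator against $\bB$ purely in terms of $\bB_y$ — via \eqref{LinearGrowth}, the $AL^2_\tau$-analogue of \eqref{Comm}, and the integral representation of $\cS_\tau$ — the linear growth of $\bB$ would defeat the estimate, since the non-local operators $\cC_\tau$ and $\<D_y\>^s$ do not interact cleanly with linearly growing coefficients.
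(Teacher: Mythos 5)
Your overall architecture — differentiating $E(t)=\|\cC_\tau\<D_y\>^s\bz\|_{L^2}^2$, extracting the coercive $AH^{s+\frac12}$ gain from the shrinking radius via \eqref{SmoothingEffect}, killing the principal Schr\"odinger term by reality, and handling the $\<D_y\>^s$ part of the commutator via the $A$-version of \eqref{Comm} — matches the paper's proof. The gap is in your treatment of the $[\cC_\tau,\bB]\<D_y\>^s$ piece of the commutator.

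You write $[\cC_\tau,\bB]v=(\cC_\tau\bB-\bB)\cC_\tau v - (\cS_\tau\bB)\cS_\tau v$ with $v=\<D_y\>^sP_{\leq j}\bz_y$ and propose to close via the pointwise bounds $\|\cC_\tau\bB-\bB\|_{L^\infty}\lesssim\tau M$ and $\|\cS_\tau\bB\|_{L^\infty}\lesssim\tau M$. But $v$ carries a full derivative: $\|\cC_\tau v\|_{L^2}$ and $\|\cS_\tau v\|_{L^2}$ are both comparable to $\|\bz\|_{AH^{s+1}_\tau}$, so the naive pairing against $\tilde\bz$ gives a contribution of size $\tau M\|\bz\|_{AH^{s+1}_\tau}\|\bz\|_{AH^s_\tau}$. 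This \emph{cannot} be absorbed by the smoothing gain $\tfrac{M\tau_0}{\delta}\|\bz\|_{AH^{s+\frac12}_\tau}^2$: by Cauchy--Schwarz in frequency $\|\bz\|_{AH^{s+\frac12}_\tau}^2\leq\|\bz\|_{AH^{s+1}_\tau}\|\bz\|_{AH^s_\tau}$, i.e.\ the inequality goes the wrong way. Equivalently, an $L^\infty$ multiplier is not bounded on $H^{-\frac12}$, so you cannot trade half a derivative onto $\tilde\bz$ by duality without further structure. For the term $(\cC_\tau\bB-\bB)\cC_\tau v=(\cC_\tau\bB-\bB)\tilde\bz_y$ you could salvage matters by integrating by parts, which replaces $\|\cC_\tau\bB-\bB\|_{L^\infty}$ by $\|(\cC_\tau\bB-\bB)_y\|_{L^\infty}\lesssim M$; but your proposal does not do this, and in any case no such rescue is available for $(\cS_\tau\bB)\cS_\tau v$, because $\cS_\tau v$ is not the $y$-derivative of $\tilde\bz$.

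The missing ingredient is precisely the paraproduct decomposition the paper uses for the term $\Re\<\cS_\tau\bB\cdot\cS_\tau\bz_y,\cC_\tau\bz\>$ in \eqref{EnergyID}. Splitting into $\bT_{\cS_\tau\bB}\cS_\tau\bz_y$ and the remainder, the low-high piece is estimated by \eqref{Asym-1}, which genuinely transfers $L^\infty$ control of $\cS_\tau\bB$ onto an $H^{-\frac12}\times H^{\frac12}$ duality pairing and produces the absorbable $\tau_0 M\|\cC_\tau\bz\|_{H^{\frac12}}^2$; the balanced/high-low remainder is handled by \eqref{Asym-2} at the $L^2$ level using $\|\cS_\tau\bB\|_{W^{1,\infty}}\lesssim M$. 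Without this (or an equivalent commutator estimate that distributes the derivative), the estimate does not close. Your integral representation $\cS_\tau=\int_0^\tau\cC_s\partial_y\,ds$ and the use of \eqref{LinearGrowth} are both correct and do establish the right $L^\infty$ bounds on the coefficients, so the gap is not in those bounds; it is in believing that $L^\infty$ bounds on a coefficient suffice to pair a full derivative of $\bz$ against $\tilde\bz$.
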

\begin{rem}
The a priori estimate of Proposition~\ref{prop:ModelAP} (and Proposition~\ref{prop:TransportModelAP} below) will be applied as part of a bootstrap estimate to prove Theorem~\ref{thrm:UW-existence}. To clarify the role of each constant in its application to the proof of Theorem~\ref{thrm:UW-existence} and explain why the quantifiers appear in the order above, we briefly explain how each constant arises. We first note that \(s,\tau_0\) will be provided by the hypothesis of Theorem~\ref{thrm:UW-existence}, and the constant \(M\) will be determined by \(s\), \(\tau_0\), and the initial data. The constant \(\delta\), which depends only on \(s\), arises from the proof of Proposition~\ref{prop:ModelAP} and ensures that the linear decay of the function \(\tau\) gives us control the second term on \(\LHS{ModelAP}\). The timescale \(T\) will be given to us as part of the bootstrap, but the upper bound of \(\frac\delta{2M}\) is chosen to ensure that for all \(t\in[0,T]\) we have \(\tau(t)\approx \tau_0\), uniformly in all other parameters.
\end{rem}

\begin{rem}
The second term on \(\LHS{ModelAP}\) provides a global smoothing estimate for the equation \eqref{ToyModel}. We remark that this estimate is distinct from the local smoothing effect of linear Schr\"odinger operators, although we expect that solutions of \eqref{ToyModel} do indeed exhibit some form of local smoothing (see for example~\cite{MR2955206,MR2096797}).
\end{rem}

\begin{proof}
We first consider the case that \(s = 0\) and \(j = \infty\), with the convention that \(P_{\leq \infty} = 1\). Using the product rule \eqref{ProductRule} and integration by parts we compute
\begin{equation}\label{EnergyID}
\begin{aligned}
\tfrac d{dt} \tfrac12\|\cC_\tau \bz\|_{L^2}^2 - \tfrac {M\tau_0}\delta\Re\bigl\<\partial_y\cS_\tau \bz,\cC_\tau\bz\bigr\> &= \tfrac12\Re\bigl\<\cC_\tau\bB_y \cdot\cC_\tau\bz,\cC_\tau\bz\bigr\> + \Re\bigl\<\cS_\tau\bB\cdot\cS_\tau\bz_y,\cC_\tau\bz\bigr\>\\
&\quad +\Im\bigl\<\cC_\tau\bff,\cC_\tau\bz\bigr\>+\Im\bigl\<\cC_\tau\bg,\cC_\tau\bz\bigr\>.
\end{aligned}
\end{equation}
We remark that in order to justify the integration by parts we use \eqref{LinearGrowth} to bound
\[
\|\cC_\tau \bB\|_{L^\infty}\lesssim \|\bB\|_{L^\infty} + \tau \|\bB_y\|_{AL^\infty_\tau} <\infty
\]
for a.e. \(t\in(0,T)\). This is the only place in the proof we use that \(\bB\) is bounded and hence the estimate \eqref{ModelAP} is independent of the size of \(\|\bB\|_{L^\infty}\).

For the smoothing term on LHS\eqref{EnergyID} we apply the estimate \eqref{SmoothingEffect} to bound
\[
\tfrac{M\tau_0}\delta\|\cC_\tau\bz\|_{H^{\frac12}}^2 \lesssim - \tfrac {M\tau_0}\delta\Re\bigl\<\partial_y\cS_\tau \bz,\cC_\tau\bz\bigr\> + \tfrac M\delta\|\cC_\tau\bz\|_{L^2}^2,
\]
where we have used that \(\tau\approx \tau_0\).

For the first term on RHS\eqref{EnergyID} we use the hypothesis \eqref{coefficient-ests} to bound
\[
\left|\bigl\<\cC_\tau\bB_y \cdot\cC_\tau\bz,\cC_\tau\bz\bigr\>\right|\leq \|\cC_\tau \bB_y\|_{L^\infty}\|\cC_\tau\bz\|_{L^2}^2\lesssim M\|\cC_\tau \bz\|_{L^2}^2.
\]

For the second term on RHS\eqref{EnergyID} we decompose using paraproducts to write
\begin{equation}
\label{RHS21}
\Re\bigl\<\cS_\tau\bB\cdot\cS_\tau\bz_y,\cC_\tau\bz\bigr\> = \Re\bigl\<\bT_{\cS_\tau\bB}\cS_\tau\bz_y,\cC_\tau\bz\bigr\> + \Re\bigl\<\cS_\tau\bB\cdot\cS_\tau\bz_y - \bT_{\cS_\tau\bB}\cS_\tau\bz_y,\cC_\tau\bz\bigr\>.
\end{equation}
Further, we observe that applying Bernstein's inequality with the fact that \(\cS_\tau = \frac12(e^{\tau D_y} - e^{-\tau D_y})\) and \(\tau\approx \tau_0\) we may bound
\[
\|\cS_\tau \bB\|_{L^\infty} \lesssim \tau \Bigl(\|P_{\leq \frac1\tau}\bB_y\|_{A L^\infty_\tau} + \|P_{>\frac1\tau}\bB_{yy}\|_{AH^{-\frac12}_\tau}\Bigr)\lesssim \tau_0M,\qquad\|\cS_\tau \bB_y\|_{L^\infty}\lesssim \|\bB_y\|_{A L^\infty_\tau}\lesssim M.\]
Consequently, the first term in \eqref{RHS21} may be bounded by applying the estimate \eqref{Asym-1} with the fact that \(\|\cT_\tau\|_{L^2\rightarrow L^2}\leq 1\) (which follows from Plancherel's Theorem) to obtain
\[
\left|\bigl\<\bT_{\cS_\tau\bB}\cS_\tau\bz_y,\cC_\tau\bz\bigr\>\right|\lesssim \|\cS_\tau \bB\|_{L^\infty}\|\cS_\tau \bz_y\|_{H^{-\frac12}}\|\cC_\tau\bz\|_{H^{\frac12}}\lesssim \tau_0M \|\cC_\tau \bz\|_{H^{\frac12}}^2,
\]
whereas the second term in \eqref{RHS21} may be bounded by applying \eqref{Asym-2} to obtain
\[
\left|\bigl\<\cS_\tau\bB\cdot\cS_\tau\bz_y - \bT_{\cS_\tau\bB}\cS_\tau\bz_y,\cC_\tau\bz\bigr\>\right|\lesssim\|\cS_\tau \bB\|_{W^{1,\infty}}\|\cS_\tau \bz\|_{L^2}\|\cC_\tau \bz\|_{L^2}\lesssim M\|\cC_\tau \bz\|_{L^2}^2.
\]

For the remaining terms on RHS\eqref{EnergyID} we use duality to bound
\[
\left|\bigl\<\cC_\tau\bff,\cC_\tau\bz\bigr\>\right| \leq \|\cC_\tau \bff\|_{L^2}\|\cC_\tau \bz\|_{L^2},\qquad \left|\bigl\<\cC_\tau\bg,\cC_\tau\bz\bigr\>\right| \leq \|\cC_\tau \bg\|_{H^{-\frac12}}\|\cC_\tau \bz\|_{H^{\frac12}}.
\]

Combining these estimates we obtain
\[
\partial_t \|\cC_\tau \bz\|_{L^2}^2 + \tfrac{M\tau_0}\delta\|\cC_\tau \bz\|_{H^{\frac12}}^2 \lesssim \tfrac M\delta \|\cC_\tau \bz\|_{L^2}^2 + \tau_0M\|\cC_\tau \bz\|_{H^{\frac12}}^2 + \|\cC_\tau \bff\|_{L^2}\|\cC_\tau \bz\|_{L^2} + \|\cC_\tau \bg\|_{H^{-\frac12}}\|\cC_\tau \bz\|_{H^{\frac12}}.
\]
Taking \(C>0\) to be a sufficiently large (absolute) constant to absorb the first term on the right hand side, we then obtain
\begin{align*}
&\partial_t\left(e^{-C t\frac M\delta}\|\cC_\tau \bz\|_{L^2}^2 + \tfrac{M\tau_0}\delta\int_0^t e^{-C\sigma\frac M\delta}\|\cC_\tau \bz\|_{H^{\frac12}}^2\,d\sigma,\right)\\
&\qquad \lesssim e^{-Ct\frac M\delta}\Biggl(\tau_0M\|\cC_\tau \bz\|_{H^{\frac12}}^2 + \|\cC_\tau \bff\|_{L^2}\|\cC_\tau \bz\|_{L^2} + \|\cC_\tau \bg\|_{H^{-\frac12}}\|\cC_\tau \bz\|_{H^{\frac12}}\Biggr).
\end{align*}
We then integrate, using that \(T\frac M\delta\leq\frac12\), to obtain
\begin{align*}
\sup\limits_{t\in[0,T]}\|\cC_\tau \bz\|_{L^2}^2 + \tfrac{M\tau_0}\delta\|\cC_\tau \bz\|_{L^2_TH^{\frac12}}^2&\lesssim \|\cC_{\tau_0}\bz_0\|_{L^2}^2 + \tau_0M\|\cC_\tau\bz\|_{L^2_TH^{\frac12}}^2\\
&\quad + \|\cC_\tau\bff\|_{L^1_TL^2}\|\cC_\tau\bz\|_{L^\infty_TL^2} + \|\cC_\tau\bg\|_{L^2_TH^{-\frac12}}\|\cC_\tau \bz\|_{L^2_TH^{\frac12}}\\
&\lesssim\|\cC_{\tau_0}\bz_0\|_{L^2}^2 +  \delta \left(\sup\limits_{t\in[0,T]}\|\cC_\tau \bz\|_{L^2}^2 + \tfrac{M\tau_0}\delta \|\cC_\tau\bz\|_{L^2_TH^{\frac12}}^2\right)\\
&\quad + \tfrac1\delta\left(\|\cC_\tau \bff\|_{L^1_TL^2}^2 + \tfrac \delta {M\tau_0}\|\cC_\tau \bg\|_{L^2_TH^{-\frac12}}^2\right),
\end{align*}
so provided \(0<\delta\ll1\) is sufficiently small, independently of all other parameters, we may apply the estimate \eqref{EquivalentNorm} to obtain the estimate \eqref{ModelAP}.

To handle the case \(s\neq 0\) we simply apply \(\<D_y\>^s\) to the equation \eqref{ToyModel} and apply the commutator estimate \eqref{Comm} to obtain
\[
\|[\<D_y\>^s,\bB]\bz_y\|_{AL^2_\tau}\lesssim \|\bB_y\|_{AZ^0_\tau}\|\bz\|_{AH^s_\tau},
\]
where we note that we have used that \(-\frac12< s\leq 1\). The estimate \eqref{ModelAP} then follows from possibly shrinking the size of \(\delta\), depending on \(s\). Finally, to handle the case that \(j<\infty\), we simply replace \(\bz\) by \(\bz_{\leq j}\) on the right hand side of \eqref{EnergyID} and use that \(\|\bz_{\leq j}\|_{AH^s_{\tau}}\leq\|\bz\|_{AH^s_\tau}\).
\end{proof}

The second model we consider is the (regularized) transport-type equation
\begin{equation}\label{TransportModel}
\bz_t + P_{\leq j}(\bB\bz_{\leq j,y}) = \bff.
\end{equation}
This will be applied to bound the low frequencies of \(W\).

In order to prove a priori bounds for \eqref{TransportModel}, it will be useful to first record the following consequence of the Picard-Lindel\"of Theorem:
\begin{lem}\label{lem:Picard}
Let \(T>0\) and \(\bB\colon[0,T]\times\R\to\R\) be a continuous function so that for some constant \(C\geq 0\) we have \(|\bB(t,0)| \leq  C\) and \(\bB_y\in L^\infty([0,T]\times\R)\). Then, for each \(y\in \R\), there exists a unique solution \(Y\in \Cont^1([0,T])\) of the ODE
\begin{equation}\label{Transport}
\begin{cases}
Y_t(t,y) = \bB(t,Y(t,y))
\smallskip\\
Y(0,y) = y.
\end{cases}
\end{equation}
Further, for each \(y\in \R\) the derivative \(Y_y\in \Cont^1([0,T])\) and we have the estimate
\begin{equation}\label{ODE-growth}
e^{-T\|\bB_y\|_{L^\infty_TL^\infty}}\leq \|Y_y\|_{L^\infty_TL^\infty}\leq e^{T\|\bB_y\|_{L^\infty_TL^\infty}},
\end{equation}
and hence the map \(y\to Y\) is a diffeomorphism.
\end{lem}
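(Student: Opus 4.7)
The plan is to invoke the standard Picard--Lindel\"of theorem to construct $Y$, and then to derive and explicitly solve the variational equation for $Y_y$, from which the two-sided estimate and the diffeomorphism property drop out.

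Setting $L:=\|\bB_y\|_{L^\infty_TL^\infty}$, the hypotheses give that $\bB$ is continuous in $t$ and globally Lipschitz in $y$ with constant $L$, and satisfies the linear growth bound $|\bB(t,y)|\leq C+L|y|$. Picard--Lindel\"of then produces, for each fixed $y\in\R$, a unique solution $Y(\cdot,y)\in\Cont^1([0,T])$ of \eqref{Transport}; the linear growth precludes any finite-time blow-up, so the solution extends to all of $[0,T]$.

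To produce $Y_y$ I would pass to the integral formulation $Y(t,y)=y+\int_0^t\bB(s,Y(s,y))\,ds$. A first Gronwall estimate applied to $Y(t,y+h)-Y(t,y)$ shows that $y\mapsto Y(t,y)$ is $e^{LT}$-Lipschitz, uniformly in $t$. Differentiating the integral equation formally in $y$ then leads to the linear Volterra equation
\[
Y_y(t,y)=1+\int_0^t \bB_y(s,Y(s,y))\,Y_y(s,y)\,ds,
\]
whose unique solution is the explicit formula $Y_y(t,y)=\exp\!\left(\int_0^t\bB_y(s,Y(s,y))\,ds\right)$. To justify that the difference quotient $D_h(t):=(Y(t,y+h)-Y(t,y))/h$ actually converges to this function, I would rewrite $D_h$ as a solution of an approximate Volterra equation with coefficient $A_h(s):=\int_0^1\bB_y(s,(1-\theta)Y(s,y)+\theta Y(s,y+h))\,d\theta$, note that $|A_h|\leq L$ uniformly in $(s,h)$, and pass to the limit $h\to 0$ by dominated convergence together with the Lipschitz continuity of $Y$ in $y$. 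The explicit formula for $Y_y$ then immediately gives the pointwise bound $e^{-LT}\leq Y_y(t,y)\leq e^{LT}$, which yields \eqref{ODE-growth} (the lower bound on the sup norm being automatic from $Y_y(0,y)=1$).

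Finally, the diffeomorphism property follows from the inverse function theorem: for each fixed $t$ the map $y\mapsto Y(t,y)$ is $\Cont^1$ with derivative uniformly bounded above and below by positive constants, so it is strictly increasing and, in view of the Lipschitz bound $|Y(t,y)-Y(t,0)|\leq e^{LT}|y|$ combined with the lower bound $Y_y\geq e^{-LT}$, maps $\R$ bijectively onto $\R$ with a $\Cont^1$ inverse. The only mild obstacle is the meaning of $\bB_y(s,Y(s,y))$ pointwise when $\bB_y$ is merely in $L^\infty$; this is handled either by the Gronwall-based difference-quotient argument above (which only uses $L^\infty$ bounds on $\bB_y$ and the Lipschitz dependence of $Y$ on $y$), or by noting that in the actual application of the lemma---to the coefficient $\bB$ defined by \eqref{b-def}---the function $\bB$ is smooth and no such ambiguity arises.
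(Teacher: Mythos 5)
Your proof takes essentially the same approach as the paper: global existence from Picard--Lindel\"of plus the linear growth bound and Gronwall, and the two-sided estimate from the explicit exponential formula for $Y_y$, which is exactly what one gets by integrating the paper's logarithmic-derivative identity $\frac{d}{dt}\log Y_y = \bB_y(t,Y)$. Your extra care in deriving the variational equation via difference quotients, and in flagging the pointwise meaning of $\bB_y(t,Y(t,y))$ when $\bB_y$ is merely $L^\infty$, goes beyond what the paper records — the paper takes the differentiability of $Y$ in $y$ and the identity for $Y_y$ as standard, which is harmless in its actual applications where $\bB$ is smooth.
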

\begin{proof}
Our hypotheses ensure that \(\bB(t,y)\) is continuous in \(t\) and Lipschitz continuous in \(y\), with uniform Lipschitz constant \(\|\bB_y\|_{L^\infty_TL^\infty}\). For fixed \(y\in \R\), the Picard-Lindel\"of Theorem then guarantees local existence for \eqref{Transport}. However, using the estimate
\[
|b(t,Y(t,y)|\leq C + \|\bB_y\|_{L^\infty_TL^\infty}|Y(t,y)|,
\]
and Gronwall's inequality, the solution can be extended to the entire time interval \([0,T]\). It remains to prove \eqref{ODE-growth}. However, this readily follows from the observation that
\[
\frac d{dt}\bigl(\log Y_y(t,y)\bigr) = b_y(t,Y(t,y)).
\]
\end{proof}

We may then prove our main a priori estimate for solutions of \eqref{TransportModel}:
\begin{prop}\label{prop:TransportModelAP}
Let \(0<\delta\ll1\) be a sufficiently small constant and \(j\geq 8\). Given \(0<\tau_0\leq 1\), \(M>0\) and \(0<T\leq \frac\delta{2M}\), define \(\tau\in\Cont^1([0,T])\) as in \eqref{tau-def} and suppose that \(\bB\colon[0,T]\times\R\to\R\) is a continuous function satisfying \eqref{coefficient-ests}. Then, if \(\bz\in \Cont([0,T];AL^\infty_\tau)\) such that for almost every \(t\in(0,T)\) we have that \(\bz_t,\bz_y\in AL^\infty_\tau\) and \(\bz\) is a solution of \eqref{TransportModel} with initial data \(\bz(0) = \bz_0\) we have estimate
\begin{equation}\label{TransportModelAP}
\sup\limits_{t\in[0,T]}\|P_0\bz\|_{AL^\infty_\tau}\lesssim \|P_0\bz_0\|_{AL^\infty_{\tau_0}} + \|P_{>0}\bz\|_{L^\infty_TAL^\infty_\tau} + \|P_0\bff\|_{L^1_TAL^\infty_\tau}
\end{equation}
where the implicit constant depends only on \(\delta\).
\end{prop}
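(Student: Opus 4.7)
The plan is to reduce \eqref{TransportModelAP} to a transport estimate for $P_0\bz$ in plain $L^\infty$ and then propagate along the characteristics of $\bB$. The starting observation is that $P_0$ has Fourier support in $[-2,2]$, so $e^{\pm\tau D_y}P_0$ is convolution with a Schwartz kernel whose $L^1$ norm is uniformly bounded for $\tau\in[0,1]$; combined with \eqref{C-inverse} applied to $\cC_\tau P_0 = P_0\cC_\tau$, this yields the equivalence
$$
\|P_0 g\|_{AL^\infty_\tau} \;\approx\; \|P_0 g\|_{L^\infty}
$$
uniformly for $\tau\in[0,1]$. Consequently it is enough to bound $\|P_0\bz(t)\|_{L^\infty}$, and on the right-hand side of \eqref{TransportModelAP} one may freely replace the $AL^\infty_\tau$ norms of the $P_0$-localized quantities by plain $L^\infty$ norms.

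Next, I apply $P_0$ to \eqref{TransportModel}. Since $j\geq 8\geq 1$, we have $P_0 P_{\leq j}=P_0$ and hence $P_0 \bz_{\leq j,y}=(P_0\bz)_y$, so that
$$
(P_0\bz)_t + \bB\,(P_0\bz)_y \;=\; P_0\bff - [P_0,\bB]\bz_{\leq j,y}.
$$
The hypotheses \eqref{coefficient-ests} give $\bB(t,0)=0$ and $\|\bB_y\|_{L^\infty}\leq \|\bB_y\|_{AL^\infty_\tau}\leq M$ (via \eqref{C-inverse}), so Lemma~\ref{lem:Picard} produces a diffeomorphic flow $Y(t,\cdot)\colon\R\to\R$. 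Integrating along characteristics and taking the $L^\infty$ norm in $y$, which is preserved by the diffeomorphism, gives
$$
\|P_0\bz(t)\|_{L^\infty} \;\leq\; \|P_0\bz_0\|_{L^\infty} + \int_0^t \|P_0\bff(s)\|_{L^\infty}\,ds + \int_0^t \|[P_0,\bB(s)]\bz_{\leq j,y}(s)\|_{L^\infty}\,ds.
$$

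For the commutator term I invoke \eqref{Comm-2} to obtain
$\|[P_0,\bB]\bz_{\leq j,y}\|_{L^\infty} \lesssim \|\bB_y\|_{L^\infty}\|\bz_{\leq j}\|_{L^\infty} \lesssim M\|\bz_{\leq j}\|_{L^\infty}$, and split $\bz_{\leq j}=P_0\bz + P_{>0}P_{\leq j}\bz$. The high-frequency piece is dominated by $\|P_{>0}\bz\|_{AL^\infty_\tau}$, using that $P_{\leq j}$ is bounded on $L^\infty$ and that \eqref{C-inverse} yields $\|P_{>0}\bz\|_{L^\infty}\lesssim \|P_{>0}\bz\|_{AL^\infty_\tau}$. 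The low-frequency piece produces $CM\int_0^t\|P_0\bz(s)\|_{L^\infty}\,ds$, which is absorbed into the left-hand side since $MT\leq \delta/2\ll 1$ (either directly or through a Gronwall argument); the equivalence of norms in the first paragraph then converts the resulting $L^\infty$ estimate back into \eqref{TransportModelAP}. I expect the main obstacle to be purely bookkeeping: carefully tracking the frequency support in the commutator splitting so as to obtain $\|P_{>0}\bz\|_{AL^\infty_\tau}$ on the right-hand side rather than the full analytic norm of $\bz$, and ensuring the formal transport-along-characteristics step is justified by the regularity hypotheses on $\bz$ and $\bB$ furnished by the proposition.
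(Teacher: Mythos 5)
Your proposal is correct and follows essentially the same route as the paper: establish the equivalence $\|P_0g\|_{AL^\infty_\tau}\approx\|P_0g\|_{L^\infty}$, apply $P_0$ to \eqref{TransportModel} to obtain a transport equation for $P_0\bz$ driven by $P_0\bff$ and a commutator, propagate along the characteristics given by Lemma~\ref{lem:Picard}, and absorb the low-frequency part of the commutator output using $MT\le\delta/2$. The only difference is cosmetic: you keep the commutator as $[P_0,\bB]\bz_{\le j,y}$ and split its argument $\bz_{\le j}=P_0\bz+P_{>0}P_{\le j}\bz$ after taking $L^\infty$, whereas the paper first rewrites $[P_0,\bB]\bz_{\le j,y}=[P_0,\bB]\bz_y-P_0[P_{\le 4},\bB]\bz_{>j,y}$ (using $P_0=P_0P_{\le 4}$ and $P_{\le 4}P_{>j}=0$ for $j\ge 8$) and then bounds both pieces by $\|\bB_y\|_{L^\infty}\|\bz\|_{L^\infty}$; both decompositions lead to the same estimate. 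One small point worth flagging, which applies equally to the paper's argument: \eqref{Comm-2} is stated for $P_{\le j}$ with $j\ge 4$, while here it is applied with $P_0$, so one needs the easy observation that the same bound $\|[P_0,f]g_y\|_{L^\infty}\lesssim\|f_y\|_{L^\infty}\|g\|_{L^\infty}$ holds directly by writing $[P_0,f]g_y=\int K_0(y-z)(f(y)-f(z))g_z(z)\,dz$ and integrating by parts in $z$ (avoiding any use of $\|f\|_{L^\infty}$, which is unavailable since $\bB$ may grow linearly).
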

\begin{proof}
Applying \(P_0\) to the equation \eqref{TransportModel} and using that for \(j\geq 8\) we have \(P_0P_{\leq j} = P_0P_{\leq 4} = P_0\), and \(P_{\leq 4}P_{>j} = 0\) we obtain
\[
P_0\bz_t + \bB P_0\bz_y = - [P_0,\bB]\bz_y + P_0[P_{\leq 4},\bB]\bz_{>j,y} + P_0\bff,
\]
where \(\bz_{>j} = P_{>j}\bz\).

From \eqref{C-inverse} and \eqref{coefficient-ests} we have
\[
\|\bB_y\|_{L^\infty_TL^\infty}\lesssim \|\cC_\tau \bB_y\|_{L^\infty_TL^\infty}\lesssim M.
\]
Applying the estimates \eqref{Comm-2} and \eqref{C-inverse}, we may then bound
\begin{align*}
\|[P_0,\bB]\bz_y\|_{L^1_TL^\infty} + \|P_0[P_{\leq 4},\bB]\bz_{>j,y}\|_{L^1_TL^\infty} + \|P_0\bff\|_{L^1_TL^\infty} &\lesssim T\|\bB_y\|_{L^\infty_TL^\infty}\|\bz\|_{L^\infty_TL^\infty} + \|P_0\bff\|_{L^1_TL^\infty}\\
&\lesssim \delta \|\bz\|_{L^\infty_TAL^\infty_\tau} + \|P_0\bff\|_{L^1_TAL^\infty_\tau}.
\end{align*}

Applying Lemma~\ref{lem:Picard} we may find a solution \(Y\) of \eqref{Transport} so that the map \(y\to Y\) is a diffeomorphsim. Writing \((\bz\circ Y)(t,y) = \bz(t,Y(t,y))\), etc. we obtain
\[
\partial_t((P_0\bz)\circ Y) = (- [P_0,\bB]\bz_y + P_0[P_{\leq 4},\bB]z_{>j,y} + P_0\bff)\circ Y.
\]
As a consequence, we may bound
\begin{align*}
\sup\limits_{t\in[0,T]}\|P_0\bz\|_{L^\infty}&\lesssim \|\bz_0\|_{L^\infty} + \|[P_0,\bB]\bz_y\|_{L^1_TL^\infty} + \|P_0[P_{\leq 4},\bB]\bz_{>j,y}\|_{L^1_TL^\infty} + \|P_0\bff\|_{L^1_TL^\infty}\\
&\lesssim \|\bz_0\|_{L^\infty_TAL^\infty_\tau} + \delta\|\bz\|_{L^\infty_TAL^\infty_\tau} + \|P_0\bff\|_{L^1_TAL^\infty_\tau}.
\end{align*}
Using that \(e^{\pm\tau D_y}P_{\leq 4}\) is bounded on \(L^\infty\) and \(P_{\leq 4}P_0 = P_0\) we may then bound
\[
\sup\limits_{t\in[0,T]}\|P_0\bz\|_{AL^\infty_\tau} \lesssim \sup\limits_{t\in[0,T]}\|P_0\bz\|_{L^\infty}.
\]
Finally, we split
\[
\|\bz\|_{AL^\infty_\tau}\leq \|P_0\bz\|_{AL^\infty_\tau} + \|P_{>0}\bz\|_{AL^\infty_\tau}
\]
so that, by choosing \(0<\delta\ll1\) sufficiently small, we obtain the estimate \eqref{TransportModelAP}.
\end{proof}

\section{Proof of Theorem~\ref{thrm:UW-existence}}\label{sect:Existence}
In this section we prove the existence of a solution \((U,W)\) of the system \eqref{U-eqn}, \eqref{W-eqn} by taking the (weak) limit of a sequence of solutions to a sequence of regularized systems. In our proof of existence we will consider \(U,W\) to be independently defined functions, i.e. not necessarily satisfying the identity \eqref{UtoW}. Once we have proved the existence of a solution to the system \eqref{U-eqn}, \eqref{W-eqn}, it is clear that if the initial data satisfies \eqref{UtoW} then the corresponding solution must also satisfy this identity.

Our regularization of the system \eqref{U-eqn}, \eqref{W-eqn} is the following:
\begin{subequations}\label{RegularizedSystem}
\begin{align}
\!iU_t &= P_{\leq j}\Bigl[- iBU_{\leq j,y} + U_{\leq j,yy} + 2i\beta_{\leq j} U_{\leq j,y} + \mu |U_{\leq j}|^2U_{\leq j}\Bigr],\label{RegularizedU}
\\
\!iW_t &=  P_{\leq j}\Bigl[- i BW_{\leq j,y} + W_{\leq j,yy} + \bigl(2(W_{\leq j})^2 - \tfrac12|W_{\leq j}|^2\bigr)_y + 3i\alpha_{\leq j}\beta_{\leq j}W_{\leq j}  + 2\mu |U_{\leq j}|^2\alpha_{\leq j}\Bigr],\label{RegularizedW}
\end{align}
\end{subequations}
where we denote \(f_{\leq j} = P_{\leq j}f\), take \(\alpha,\beta\) to be the real and imaginary parts of \(W\) as in \eqref{alpha-beta-def}, and define the regularized velocity
\[
B(t,y;j) = - 3\sech(2^{-j}y)\int_0^y \alpha_{\leq j}(t,\zeta)\beta_{\leq j}(t,\zeta)\,d\zeta.
\]
We note that the velocity \(b\) is expected to have linear (in \(y\)) growth as \(|y|\to\infty\). In the regularized version \(B\) we introduce an additional spatial weight to ensure that the velocity is bounded, albeit with a bound that depends on \(j\).

We first prove the existence of a solution to the regularized system \eqref{RegularizedSystem}:

\begin{lem}\label{lem:ODEExistence}
Given \(0<s\leq \frac12\), \(0<\tau_0\leq 1\) and \((U_0,W_0)\in AH^s_{\tau_0}\times AZ^s_{\tau_0}\) there exists a time \(T_0>0\) and a solution \((U,W)\in \Cont^1([0,T_0];AH^s_{\tau_0}\times AZ^s_{\tau_0})\) of \eqref{RegularizedSystem} with initial data \((U,W)(0) = (U_0,W_0)\).
\end{lem}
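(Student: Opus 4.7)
The plan is to view the regularized system \eqref{RegularizedSystem} as a Banach space ODE $\dot Z = \mathcal F_j(Z)$ with $Z = (U,W)$ taking values in $\mathcal B := AH^s_{\tau_0}\times AZ^s_{\tau_0}$, and invoke the Cauchy--Lipschitz theorem. The crucial feature that makes this a genuine ODE rather than a quasilinear PDE is the frequency truncation by $P_{\leq j}$: every spatial derivative on the right hand side acts on a band-limited function, so by Bernstein's inequality it factors through a bounded operator on $\mathcal B$ (with operator norm $\lesssim 2^{2j}$). In particular no derivative losses survive at this regularized level, and the fixed radius of analyticity $\tau_0$ is preserved throughout.

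First I would verify that $\mathcal F_j\colon\mathcal B\to\mathcal B$ is well defined and locally bounded. The polynomial nonlinearities $|U_{\leq j}|^2U_{\leq j}$, $(W_{\leq j})^2$, $|W_{\leq j}|^2$, $\alpha_{\leq j}\beta_{\leq j}W_{\leq j}$, and $|U_{\leq j}|^2\alpha_{\leq j}$ are all controlled by the algebra-type estimates \eqref{Symm-1}, \eqref{Symm-2} of Proposition~\ref{prop:NonlinearEstimates}, combined with the frequency-localized embedding $P_{\leq j}AH^s_{\tau_0}\hookrightarrow AL^\infty_{\tau_0}$ (immediate from Bernstein). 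The only delicate ingredient is the transport coefficient
\[
B(t,y;j) = -3\sech(2^{-j}y)\int_0^y \alpha_{\leq j}(t,\zeta)\beta_{\leq j}(t,\zeta)\,d\zeta,
\]
for which the $\sech(2^{-j}y)$ weight --- analytic on the strip $|\Im y| < \pi\,2^{j-1}$, which contains $|\Im y| \leq \tau_0 \leq 1$ --- tames the potential linear growth of the antiderivative and yields $\|B\|_{AL^\infty_{\tau_0}} + \|B_y\|_{AL^\infty_{\tau_0}} \lesssim_j \|W\|_{AL^\infty_{\tau_0}}^2$. This bound is exactly what is needed to handle the transport terms $BU_{\leq j,y}$ and $BW_{\leq j,y}$ in the two target spaces.

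Local Lipschitz continuity of $\mathcal F_j$ on $\mathcal B$ then follows by writing out the differences and reapplying the same product estimates; since $B$ depends bilinearly on $W$ and all other nonlinearities are polynomial in $(U_{\leq j},W_{\leq j})$ and their derivatives, one obtains $\|\mathcal F_j(Z_1)-\mathcal F_j(Z_2)\|_{\mathcal B}\lesssim_{j,R}\|Z_1-Z_2\|_{\mathcal B}$ on any ball of radius $R$. The standard Cauchy--Lipschitz theorem in a Banach space then produces a unique $\Cont^1$ solution on a nontrivial interval $[0,T_0]$, with $T_0$ depending on $j$ and the size of $(U_0,W_0)$. The main (and essentially only) obstacle is the pointwise bound for $B$, which the $\sech(2^{-j}y)$ weight was introduced precisely to supply; beyond this the argument is a textbook application of Banach-space ODE theory, with the genuinely quasilinear difficulty --- obtaining a $j$-independent lifespan and passing to the limit --- deferred to the a priori estimates of Section~\ref{sect:Linear}.
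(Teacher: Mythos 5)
Your proposal is correct and follows essentially the paper's argument: treat the regularized system as a Banach-space ODE in which the outer $P_{\leq j}$ renders all spatial derivatives bounded (Bernstein) and the $\sech(2^{-j}y)$ weight bounds the transport coefficient, then invoke Picard--Lindel\"of. The only small difference is that the paper gets by with the cruder bound $\|B\|_{L^\infty}\lesssim 2^j\|W\|_{L^\infty}^2$, since after $P_{\leq j}$ it estimates the band-limited output in plain $L^2$/$L^\infty$, recovers the analytic norm by paying a factor $e^{\tau_0 2^j}$, and controls the input factors via $\|\cC_\tau^{-1}\|_{L^p\to L^p}\leq 1$; your stronger claim $\|B\|_{AL^\infty_{\tau_0}}+\|B_y\|_{AL^\infty_{\tau_0}}\lesssim_j\|W\|_{AL^\infty_{\tau_0}}^2$ is also true (by contour shift, the poles of $\sech(2^{-j}\cdot)$ lying at $|\Im y|=\pi 2^{j-1}>\tau_0$) but is more than the argument requires.
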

\begin{proof}
We first bound the velocity \(B\) by
\[
\|B\|_{L^\infty}\lesssim 2^j\|\alpha_{\leq j}\|_{L^\infty}\|\beta_{\leq j}\|_{L^\infty}\lesssim 2^j\|W\|_{L^\infty}^2.
\]

Next, we apply Bernstein's inequality followed by the estimate \eqref{C-inverse} to bound
\begin{align*}
\|\RHS{RegularizedU}\|_{AH^s_{\tau_0}}&\lesssim e^{\tau_02^j}2^{sj}\Bigl[ 2^{2j}\|W\|_{L^\infty}^2\|U\|_{L^2} + 2^{2j}\|U\|_{L^2} + 2^j\|W\|_{L^\infty}\|U\|_{L^2} + 2^j\|U\|_{L^2}^3\Bigr]\\
&\lesssim_{\tau_0,j}\bigl(1 + \|(U,W)\|_{AH^s_{\tau_0}\times AZ^s_{\tau_0}}\bigr)^2\|(U,W)\|_{AH^s_{\tau_0}\times AZ^s_{\tau_0}},\\
\|\RHS{RegularizedW}\|_{AL^\infty_{\tau_0}} &\lesssim e^{\tau_02^j}\Bigl[ 2^{2j}\|W\|_{L^\infty}^3 + 2^{2j}\|W\|_{L^\infty} + 2^j\|W\|_{L^\infty}^2 + \|W\|_{L^\infty}^3 + 2^j\|U\|_{L^2}^2\|W\|_{L^\infty}\Bigr]\\
&\lesssim_{\tau_0,j}\bigl(1 + \|(U,W)\|_{AH^s_{\tau_0}\times AZ^s_{\tau_0}}\bigr)^2\|(U,W)\|_{AH^s_{\tau_0}\times AZ^s_{\tau_0}},\\
\|\partial_y\RHS{RegularizedW}\|_{H^{s-\frac12}} &\lesssim e^{\tau_02^j}\Bigl[2^{2j}\|W\|_{L^\infty}\|W_y\|_{L^2} + 2^{2j}\|W_y\|_{L^2} + 2^j\|W\|_{L^\infty}\|W_y\|_{L^2}\\
&\qquad  + \|W\|_{L^\infty}^2\|W_y\|_{L^2} + 2^{\frac32j}\|W\|_{L^\infty}\|U\|_{L^2}^2\Bigr]\\
&\lesssim_{\tau_0,j}\bigl(1 + \|(U,W)\|_{AH^s_{\tau_0}\times AZ^s_{\tau_0}}\bigr)^2\|(U,W)\|_{AH^s_{\tau_0}\times AZ^s_{\tau_0}}.
\end{align*}

Applying these bounds, and identical bounds for the difference of two solutions, we see that \(\RHS{RegularizedSystem}\) is Lipschitz continuous as a map from \(AH^s_{\tau_0}\times AZ^s_{\tau_0}\) to itself. The proof is then completed by applying the Picard-Lindel\"of Theorem.
\end{proof}

We now turn to the proof of Theorem~\ref{thrm:UW-existence}. Our goal here is to prove uniform (in \(j\)) estimates for solutions of \eqref{RegularizedSystem}. These uniform bounds show that: 1) The solution of \eqref{RegularizedSystem} can be extended to a \(j\)-independent time \(T>0\); 2) We may pass to a (weak) limit as \(j\to \infty\) to obtain a solution of the system \eqref{U-eqn}, \eqref{W-eqn}. As is standard in such arguments, our proof of these uniform bounds takes the form of a bootstrap estimate, relying on the local existence provided by Lemma~\ref{lem:ODEExistence}.
\begin{proof}[Proof of Theorem~\ref{thrm:UW-existence}]
We assume that \(j\geq 8\) and choose a sufficiently small constant \(0<\delta = \delta(s)\ll1\), independent of \(j\), as in the hypotheses of Propositions~\ref{prop:ModelAP},~\ref{prop:TransportModelAP}. Next we choose \(K = K(\delta,s,\tau_0)\geq 1\) and \(M = M(K,\delta,s,\tau_0,U_0,W_0)\geq 1\) to be sufficiently large constants. Given these constants, we set \(T_* = \tfrac \delta{2M}\) and \(\tau\in \Cont^1([0,T])\) as in \eqref{tau-def}. We will subsequently ignore the dependence of the bounds on \(\delta,s\), which we can assume have been fixed.

We make the bootstrap assumption that for some \(0<T\leq T_*\) we have
\begin{equation}\label{Bootstrap-1}
\sup\limits_{t\in[0,T]}\|U\|_{AH^s_\tau} + \sqrt{M\tau_0}\|U\|_{L^2_TAH^{\frac12}_\tau} + \sup\limits_{t\in[0,T]}\|W\|_{AZ^s_\tau} + \sqrt{M\tau_0}\|W_y\|_{L^2_TAL^2_\tau} \leq \tfrac{\sqrt{M}}K.
\end{equation}
Our goal will be to prove that if the solution of \eqref{RegularizedSystem} satisfies \eqref{Bootstrap-1}, then in fact it must satisfy \eqref{Bootstrap-1} with \(\RHS{Bootstrap-1}\) replaced by \(\frac{\sqrt M}{10K}\). By applying Lemma~\ref{lem:ODEExistence}, our solution may then be extended until time \(T = T_*\) and satisfies the estimate \eqref{Bootstrap-1}. We remark that this application of Lemma~\ref{lem:ODEExistence} uses that, given \(0\leq t_1<t_2\leq T_*\), we have
\[
\int_{t_1}^{t_2}\|f\|_{AH^{\frac12}_\tau}^2\,dt\lesssim (\tau_0M)^{2s-1}(t_2 - t_1)^{2s}\sup\limits_{t\in[t_1,t_2]}\|f\|_{AH^s_{\tau(t_1)}}^2.
\]
In particular, this motivates the difference between the pointwise-in-time and \(L^2\)-in-time regularities in \eqref{Bootstrap-1}.

We first consider estimates for the coefficient \(B\). Here it will be useful to denote
\[
f\sbrack 1(y) = - 3 \sech(2^{-j}y),\qquad f\sbrack 2(y) = 3\tanh(2^{-j}y)\sech(2^{-j}y)
\]
so that
\begin{align*}
B_y &= \underbrace{f\sbrack 1 \alpha_{\leq j}\beta_{\leq j}}_{I_1} + \underbrace{2^{-j}f\sbrack 2P_{> 0}\int_0^y\alpha_{\leq j}\beta_{\leq j}\,d\zeta}_{I_2} + \underbrace{2^{-j}f\sbrack 2 P_0\int_0^y\alpha_{\leq j}\beta_{\leq j}\,d\zeta}_{I_3}.
\end{align*}
We observe that for \(\ell = 1,2\) the functions \(f\sbrack \ell(y)\) are analytic on the strip \(\{y:|\Im y|\leq 1\}\subseteq \C\) and hence
\[
e^{\pm\tau D_y} f \sbrack \ell (y) = f\sbrack\ell(y \mp i\tau).
\]
In particular, using the embedding \(L^\infty\cap \dot H^1\subseteq Z^0\), we may use the explicit expressions for \(e^{\pm \tau D_y}f\sbrack \ell\) to obtain the \(j,\tau\)-independent bounds
\[
\|f\sbrack\ell\|_{AZ^0_\tau}\lesssim 1,\qquad \|\<y\> e^{\pm \tau D_y}f\sbrack \ell\|_{Z^0}\lesssim 2^j,
\]
Applying the product estimate \eqref{Symm-2} with the fact that
\[
\left\|P_{> 0}\int_0^y\alpha_{\leq j}\beta_{\leq j}\,d\zeta\right\|_{AZ^0_\tau}\lesssim\|\alpha_{\leq j}\beta_{\leq j}\|_{AZ^0_\tau},
\]
we may then bound
\[
\|I_1\|_{AZ^0_\tau} + \|I_2\|_{AZ^0_\tau}\lesssim \|\alpha_{\leq j}\beta_{\leq j}\|_{AZ^0_\tau}\lesssim\|W\|_{AL^\infty_\tau}\|W\|_{AZ^0_\tau},
\]
uniformly in \(j\). For the remaining term we first write
\[
e^{\pm \tau D_y}P_0\int_0^y\alpha_{\leq j}\beta_{\leq j}\,d\zeta = \int K_\pm(y - z) \left(\int_0^z \alpha_{\leq j}\beta_{\leq j}\,d\zeta\right)\,dz,
\]
where \(K_\pm\) is the kernel of \(e^{\pm \tau D_y}P_0\). As the functions \(K_{\pm}\) are Schwartz, we may bound
\begin{align*}
\left|e^{\pm \tau D_y}P_0\int_0^y\alpha_{\leq j}\beta_{\leq j}\,d\zeta\right|&\lesssim\<y\>\|\alpha_{\leq j}\|_{L^\infty}\|\beta_{\leq j}\|_{L^\infty}\lesssim \<y\>\|W\|_{AL^\infty_\tau}^2\\
\left|\partial_ye^{\pm \tau D_y}P_0\int_0^y\alpha_{\leq j}\beta_{\leq j}\,d\zeta\right|&\lesssim\|\alpha_{\leq j}\|_{L^\infty}\|\beta_{\leq j}\|_{L^\infty}\lesssim \|W\|_{AL^\infty_\tau}^2.
\end{align*}
Again using the embedding \(L^\infty\cap \dot H^1\subseteq Z^0\), with the fact that \(\<y\>^{-1}\in L^2\), this yields the estimate
\[
\left\|\tfrac1{\<y\>}e^{\pm \tau D_y}P_0\int_0^y\alpha_{\leq j}\beta_{\leq j}\,d\zeta\right\|_{Z^0}\lesssim \|W\|_{AL^\infty_\tau}^2,
\]
from which we obtain the \(j\)-independent bound
\[
\|I_3\|_{AZ^0_\tau}\lesssim \|W\|_{AL^\infty_\tau}^2.
\]
Combining these bounds, we obtain
\begin{equation}\label{bigBbound}
\|B_y\|_{AZ_\tau^0}\lesssim \|W\|_{AL^\infty_\tau}\|W\|_{AZ_\tau^0}\lesssim \tfrac M{K^2}.
\end{equation}
In particular, provided \(K\gg1\) is sufficiently large (independently of all other parameters), we see that for all \(t\in[0,T]\) we have
\[
\|B_y\|_{AZ^0_\tau}\leq M,
\]
and hence the hypothesis \eqref{coefficient-ests} of Proposition~\ref{prop:ModelAP} is satisfied (with \(b\) replaced by \(B\)).

Next we consider bounds for \(U\), which we obtain by applying Proposition~\ref{prop:ModelAP}. Applying the estimate \eqref{Asym-3} and the bootstrap assumption \eqref{Bootstrap-1} we may bound
\[
\|\beta_{\leq j} U_{\leq j,y}\|_{L^2_TAH^{s-\frac12}_\tau}\lesssim \|W\|_{L^\infty_TAZ^0_\tau}\|U\|_{L^2_TAH^{s+\frac12}_\tau}\lesssim \tfrac{\sqrt{M}}{K}\|U\|_{L^2_TAH^{s+\frac12}_\tau},
\]
uniformly in \(j\). Using the trilinear estimate \eqref{Trilinear} and the bootstrap assumption \eqref{Bootstrap-1} we estimate
\[
\||U_{\leq j}|^2U_{\leq j}\|_{L^1_TAH^s_\tau}\lesssim \|U\|_{L^\infty_TAL^2_\tau}\|U\|_{L^2_TAH^{\frac12}_\tau}\|U\|_{L^2_TAH^{s+\frac12}_\tau}\lesssim \tfrac{\sqrt M}{K^2\sqrt{\tau_0}}\|U\|_{L^2_TAH^{s+\frac12}_\tau},
\]
uniformly in \(j\). Applying Proposition~\ref{prop:ModelAP}, we may then bound
\[
\sup\limits_{t\in[0,T]}\|U\|_{AH^s_\tau} + \sqrt{M\tau_0}\|U\|_{L^2_TAH^{s+\frac12}_{\tau}}\lesssim \|U_0\|_{AH^s_{\tau_0}} + \bigl(\tfrac1{K\tau_0\sqrt{M}} + \tfrac1{K^2\tau_0}\bigr)\sqrt{M\tau_0}\|U\|_{L^2_TAH^{s+\frac12}_\tau},
\]
uniformly in \(j\).
Provided \(K\) is sufficiently large (depending only on \(\delta,s,\tau_0\)) we obtain the estimate
\begin{equation}\label{U-AP-Bound}
\sup\limits_{t\in[0,T]}\|U\|_{AH^s_\tau} + \sqrt{M\tau_0}\|U\|_{L^2_TAH^{s+\frac12}_{\tau}}\lesssim_K \|U_0\|_{AH^s_{\tau_0}},
\end{equation}
uniformly in \(M\geq 1\) and \(j\). In particular, provided \(M\gg1\) is sufficiently large (depending on \(U_0,K,\delta,s,\tau_0\)) we have
\[
\sup\limits_{t\in[0,T]}\|U\|_{AH^s_\tau} + \sqrt{M\tau_0}\|U\|_{L^2_TAH^{\frac12}_{\tau}} \leq\tfrac{\sqrt M}{20K},
\]
which closes the first part of the bootstrap.

Next we consider bounds for \(W_y\), which will once again be proved using Proposition~\ref{prop:ModelAP}. Differentiating \eqref{RegularizedW}, we obtain the equation
\begin{align*}
i\left(W_{ty} + P_{\leq j}(BW_{\leq j,yy})\right) &= P_{\leq j}W_{\leq j,yyy}\\
&\quad + P_{\leq j}\Bigl[4W_{\leq j}W_{\leq j,y} - \tfrac12\bar W_{\leq j}W_{\leq j,y} - \tfrac 12 W_{\leq j}\bar W_{\leq j,y} + 2\mu |U_{\leq j}|^2\alpha_{\leq j}\Bigr]_y\\
&\quad + iP_{\leq j}\Bigl[3\alpha_{\leq j,y}\beta_{\leq j} W_{\leq j} + 3\alpha_{\leq j}\beta_{\leq j,y}W_{\leq j} + 3\alpha_{\leq j}\beta_{\leq j}W_{\leq j,y} - B_yW_{\leq j,y} \Bigr].
\end{align*}
Applying the estimate \eqref{Symm-1} we may bound
\[
\|(4W_{\leq j}W_{\leq j,y} - \tfrac12\bar W_{\leq j}W_{\leq j,y} - \tfrac 12 W_{\leq j}\bar W_{\leq j,y})_y\|_{L^2_TAH^{s-1}_\tau}\lesssim \|W\|_{L^\infty_TAL^\infty_\tau}\|W_y\|_{L^2_TAH^s_\tau}\lesssim \tfrac{\sqrt M}K\|W_y\|_{L^2_TAH^s_\tau},
\]
uniformly in \(j\). Similarly, using that \(T\lesssim \frac1M\), with \eqref{Symm-2}, \eqref{Asym-3} we obtain
\begin{align*}
&\|\alpha_{\leq j}\beta_{\leq j,y}W_{\leq j} + \alpha_{\leq j,y}\beta_{\leq j} W_{\leq j} + \alpha_{\leq j}\beta_{\leq j}W_{\leq j,y}\|_{L^1_TAH^{s-\frac12}_\tau}\\
&\qquad\lesssim T\|W\|_{L^\infty_TAL^\infty_\tau}\|W\|_{L^\infty_TAZ^0_\tau}\|W_y\|_{L^\infty_TAH^{s-\frac12}_\tau}\\
&\qquad\lesssim \tfrac 1{K^2}\|W_y\|_{L^\infty_TAH^{s-\frac12}_\tau},
\end{align*}
and using the estimate \eqref{bigBbound} with \eqref{Asym-3}, we also have
\[
\|B_yW_{\leq j,y}\|_{L^1_TAH^{s-\frac12}_\tau}\lesssim T\|B_y\|_{L^\infty_TAZ^0_\tau}\|W_y\|_{L^\infty_TAH^{s-\frac12}_\tau}\lesssim \tfrac 1{K^2}\|W_y\|_{L^\infty_TAH^{s-\frac12}_\tau},
\]
where both estimates are again uniform in \(j\). Finally, applying the estimate \eqref{Trilinear-2} we may bound
\begin{align*}
\|( |U_{\leq j}|^2\alpha_{\leq j})_y\|_{L^2_TAH^{s - 1}_\tau}&\lesssim \|U\|_{L^2_TAH^{s+\frac12}_\tau}\|U\|_{L^\infty_TAL^2_\tau}\|W\|_{L^\infty_TAL^\infty_\tau} + \|U\|_{L^\infty_TAL^2_\tau}^2\|W_y\|_{L^2_TAH^s_\tau}\\
&\lesssim \tfrac M{K^2}\bigl(\|U\|_{L^2_TAH^{s+\frac12}_\tau} + \|W_y\|_{L^2_TAH^s_\tau}\bigr),
\end{align*}
uniformly in \(j\).
Applying Proposition~\ref{prop:ModelAP} we then obtain
\begin{align*}
\sup\limits_{t\in[0,T]}\|W_y\|_{AH^{s - \frac12}_\tau} + \sqrt{M\tau_0}\|W_y\|_{L^2_TAH^s_\tau} &\lesssim \|W_{0y}\|_{AH^{s-\frac12}_{\tau_0}} + \bigl(\tfrac1{K\tau_0\sqrt{M}} + \tfrac1{K^2\tau_0}\bigr)\sqrt{M\tau_0}\|W_y\|_{L^2_TAH^s_\tau} \\
&\quad + \tfrac1{K^2}\|W_y\|_{L^\infty_TH^{s - \frac12}_\tau} + \tfrac1{K^2\tau_0}\sqrt{M\tau_0}\|U\|_{L^2_TH^s_\tau},
\end{align*}
uniformly in \(j\).
Provided \(K\gg1\) is sufficiently large (depending only on \(\delta,s,\tau_0\)), we may then apply the estimate \eqref{U-AP-Bound} to yield
\begin{equation}\label{Wy-AP-Bound}
\sup\limits_{t\in[0,T]}\|W_y\|_{AH^{s - \frac12}_\tau} + \sqrt{M\tau_0}\|W_y\|_{L^2_TAH^s_\tau} \lesssim_K \|U_0\|_{AH^s_{\tau_0}} + \|W_{0y}\|_{AH^{s-\frac12}_{\tau_0}},
\end{equation}
uniformly in \(M\geq 1\) and \(j\).  In particular, provided \(M\gg1\) is sufficiently large (depending on \(U_0,W_0,K,\delta,s,\tau_0\)) we may ensure that
\[
\sup\limits_{t\in[0,T]}\|W_y\|_{AH^{s - \frac12}_\tau} + \sqrt{M\tau_0}\|W_y\|_{L^2_TAL^2_\tau} \leq \tfrac{\sqrt M}{40K},
\]
which closes the next part of the bootstrap.

For the final part of the bootstrap, we first apply Bernstein's inequality and \eqref{Wy-AP-Bound} to bound
\begin{equation}\label{HF-W-Bound}
\sup\limits_{t\in[0,T]}\|P_{>0}W\|_{AL^\infty_\tau}\lesssim \sup\limits_{t\in[0,T]}\|W_y\|_{AH^{s-\frac12}_\tau}\lesssim_K \|U_0\|_{AH^s_{\tau_0}} + \|W_{0y}\|_{AH^{s-\frac12}_{\tau_0}},
\end{equation}
provided \(K\gg1\) is sufficiently large, uniformly in \(M\geq 1\) and \(j\). It remains to bound the low frequencies, for which we will use Proposition~\ref{prop:TransportModelAP}. We first apply Bernstein's inequality, the estimate \eqref{Wy-AP-Bound}, and the fact that \(T\lesssim \frac1 M\) to bound
\[
\|P_0P_{\leq j}W_{\leq j,yy}\|_{L^1_TAL^\infty_\tau} \lesssim T\|W_y\|_{L^\infty_TAH^{s-\frac12}_\tau} \lesssim_K \tfrac1M\bigl(\|U_0\|_{AL^s_{\tau_0}} + \|W_{0y}\|_{AH^{s-\frac12}_{\tau_0}}\bigr),
\]
provided \(K\gg1\) is sufficiently large, uniformly in \(M\geq 1\) and \(j\). Similarly, we may bound
\begin{align*}
&\|P_0(2W_{\leq j}^2-\tfrac12|W_{\leq j}|^2)_y\|_{L^1_TAL^\infty_\tau} \lesssim \tfrac1M\|W\|_{AL^\infty_\tau}^2\lesssim \tfrac1{K\sqrt M}\|W\|_{L^\infty_TAL^\infty_\tau},\\
&\|P_0(\alpha_{\leq j}\beta_{\leq j} W_{\leq j})\|_{L^1_TAL^\infty_\tau} \lesssim T\|W\|_{L^\infty_TAL^\infty_\tau}^3\lesssim \tfrac1{K^2}\|W\|_{L^\infty_TAL^\infty_\tau},\\
&\|P_0(|U_{\leq j}|^2\alpha_{\leq j})\|_{L^1_TAL^\infty_\tau} \lesssim\||U|^2\alpha\|_{L^1_TAL^1_\tau}\lesssim T\|U\|_{L^\infty_TAL^2_\tau}^2\|W\|_{L^\infty_TAL^\infty_\tau}\lesssim \tfrac1{K^2}\|W\|_{L^\infty_TAL^\infty_\tau},
\end{align*}
where all the estimates are uniform in \(j\).
Applying these bounds, Proposition~\ref{prop:TransportModelAP}, and the estimates \eqref{U-AP-Bound} and \eqref{HF-W-Bound} we obtain
\begin{align*}
\sup\limits_{t\in[0,T]}\|W\|_{AL^\infty_\tau}&\lesssim \sup\limits_{t\in[0,T]}\|P_0W\|_{AL^\infty_\tau} + \sup\limits_{t\in[0,T]}\|P_{>0}W\|_{AL^\infty_\tau}\\
&\lesssim \|W_0\|_{AL^\infty_\tau} + C(K)\Bigl(\|U_0\|_{AH^s_{\tau_0}} + \|W_{0y}\|_{AH^{s-\frac12}_{\tau_0}}\Bigr)\\
&\quad + \Bigl(\tfrac1{K\sqrt M} + \tfrac1{K^2}\Bigr)\|W\|_{L^\infty_TAL^\infty_\tau},
\end{align*}
uniformly in \(j\), where \(C(K)\) is a constant depending only on \(K,\delta,s,\tau_0\). Once again, provided \(K\gg1\) is sufficiently large (depending only on \(\delta,s,\tau_0\)) we obtain the estimate
\begin{equation}\label{W-AP-Bound}
\sup\limits_{t\in[0,T]}\|W\|_{AL^\infty_\tau}\lesssim_K \|U_0\|_{AH^s_{\tau_0}} + \|W_0\|_{AZ^s_{\tau_0}},
\end{equation}
uniformly in \(M\geq 1\) and \(j\). Consequently, provided \(M\gg1\) is sufficiently large (depending only on \(U_0,W_0,K,s,\delta,\tau_0\)) we obtain the bound
\[
\sup\limits_{t\in[0,T]}\|W\|_{AL^\infty_\tau}\leq \tfrac{\sqrt M}{40K},
\]
which suffices to close the bootstrap.

With the bootstrap closed, the estimate \eqref{U-AP} for solutions of \eqref{RegularizedU} follows from \eqref{U-AP-Bound} and the estimate \eqref{W-AP} for solutions of \eqref{RegularizedW}, from \eqref{Wy-AP-Bound} and \eqref{W-AP-Bound}, where both estimates are uniform in \(j\). Passing to a subsequence as \(j\to \infty\), we may extract a weak* limit \(U\in \Cont_w([0,T];AH^s_\tau)\) and \(W\in \Cont_w([0,T];AZ^s_\tau)\) satisfying the equations \eqref{U-eqn}, \eqref{W-eqn} and the estimates \eqref{U-AP}, \eqref{W-AP}. Finally, the estimate \eqref{T-bound} follows from observing that for a suitable (very large) implicit constant, independent of \(U_0,W_0\), we may take
\[
M\lesssim \|U_0\|_{AH^s_{\tau_0}}^2 + \|W_0\|_{AZ^s_{\tau_0}}^2.
\]
\end{proof}

\section{Proof of Theorem~\ref{thrm:Main}}\label{sect:Invert}
We now turn to the proof of Theorem~\ref{thrm:Main}. Due to the existence of a solution to the system \eqref{U-eqn}, \eqref{W-eqn} proved in Theorem~\ref{thrm:UW-existence}, our main tasks in this section will first be to invert the change of variables \eqref{Coords} and then be to understand the regularity of our solution at the endpoints \(\pm x_0\) of the interval \(I\). 

\subsection{Existence} Given initial data \(u_0\in S\) satisfying our hypotheses, we denote the initial change of variables by
\[
y_0(x) = \int_0^x \frac1{|u_0(\zeta)|}\,d\zeta,
\]
which we recall is a well-defined diffeomorphism from \(I\) onto \(\R\). We then define the functions \(U_0,W_0\colon \R\to\C\) by
\[
U_0(y_0(x)) = u_0(x)\quad\text{and}\quad W_0(y_0(x)) = \frac{\bar u_0(x)u_{0x}(x)}{|u_0(x)|}.
\]
As \(u_0\in S\), there exists some \(0<s\leq \frac12\) and \(0<\tau_0\leq 1\) so that \(U_0\in AH^s_{\tau_0}\) and \(W_0\in AZ^s_{\tau_0}\). We may then apply Theorem~\ref{thrm:UW-existence} to obtain a solution of the system \eqref{U-eqn}, \eqref{W-eqn} on some time interval \([0,T]\).

A priori, the solution \((U,W)\) constructed in Theorem~\ref{thrm:UW-existence} is a distributional solution of the system \eqref{U-eqn}, \eqref{W-eqn}. However, by Sobolev embedding, for any \(n\geq 0\) and \(t\in[0,T]\) the spaces \(AH^s_\tau\) and \(AZ^s_\tau\) are both embedded in the space of bounded \(C^n\) functions. In particular, for any multi-index \(\kappa\in\N^2\) the functions \(\nabla_{t,y}^\kappa U,\nabla_{t,y}^\kappa W\) are continuous and bounded on \([0,T]\times \R\); for any \(t\in[0,T]\) the functions \(\nabla_{t,y}^\kappa U, \nabla_{t,y}^\kappa \partial_yW\) vanish as \(|y|\to\infty\); and \(U,W\) are classical solutions of \eqref{U-eqn}, \eqref{W-eqn} on \([0,T]\times \R\).

Applying Lemma~\ref{lem:Picard}, for any \(y\in \R\) we may find a solution of the ODE
\[
\begin{cases}
Y_t(t,y) = b(t,Y(t,y)) - \beta(t,Y(t,y)),\smallskip\\
Y(0,y) = y,
\end{cases}
\]
and by differentiating, it is clear that \(Y\colon[0,T]\times \R\to \R\) is smooth. In particular, \(c(t) = Y(t,0)\) is a solution of \eqref{GaugeFixing}.

As \(U\) is smooth, \(|U|\) is Lipschitz. Further, as \(U\) solves \eqref{U-eqn}, we may compute that
\begin{equation}\label{L1-Conserved}
(|U|)_t + \bigl((b - \beta)|U|\bigr)_y = 0.
\end{equation}
In particular,
\[
\partial_t\Bigl( Y_y(t,y) |U(t,Y(t,y))|\Bigr) = 0.
\]
Using the estimate \eqref{ODE-growth} and that \(|U_0(y)|>0\) for every \(y\in \R\), we obtain
\begin{equation}\label{y-diffeo}
|U(t,y)|>0\qtq{for every}(t,y)\in[0,T]\times \R,
\end{equation}
and consequently \(|U|\) is smooth. Further, as \(|U(t,y)|\to0\) as \(|y|\to\infty\), we may use the equations \eqref{L1-Conserved}, \eqref{GaugeFixing} to obtain
\[
\frac d{dt}\int_{c(t)}^\infty |U(t,\zeta)|\,d\zeta = 0 = \frac d{dt}\int_{-\infty}^{c(t)}|U(t,\zeta)|\,d\zeta.
\]
From the definition of \(U(0,y)\), we then have
\begin{equation}\label{y-fixed}
\int_{c(t)}^\infty |U(t,\zeta)|\,d\zeta = x_0 = \int_{-\infty}^{c(t)}|U(t,\zeta)|\,d\zeta.
\end{equation}

Next we define
\[
x(t,y) := \int_{c(t)}^y |U(t,\zeta)|\,d\zeta,
\]
and from \eqref{y-diffeo}, \eqref{y-fixed} we see that for any \(t\in[0,T]\) the map \(y\mapsto x(t,y)\) is a smooth diffeomorphsim from \(\R\) onto \(I\). Further, by construction, the map \(y\mapsto x(0,y)\) is the inverse of \(y_0\). We then take
\[
u(t,x) := \begin{cases}U(t,y)&\quad\text{if}\quad x = x(t,y)\in I,\\0&\quad\text{if}\quad x\not\in I.\end{cases}
\]
As the map \((t,y)\mapsto x(t,y)\) is smooth as a map from \([0,T]\times \R\to I\) and \(U\) is a smooth solution of \eqref{U-eqn}, the function \(u\) is a smooth solution of \eqref{QLS} on \([0,T]\times I\) and satisfies \(u(0,x) = u_0(x)\).

It remains to verify that \(u\) is sufficiently well-behaved at the endpoints \(\pm x_0\) to solve \eqref{QLS} on \([0,T]\times \R\). We first note that as \(|U|\to 0\) as \(|y|\to\infty\) we have that \(u\in \Cont([0,T]\times \R)\) is smooth away from \(x = \pm x_0\). Further, for \(x = x(t,y)\) we have
\[
u_x = \frac{U}{|U|}W\qtq{and}\bigl(\tfrac12 u^2\bigr)_{xx} = \frac{U^2}{|U|^2}\Bigl(2W^2 - \alpha W + W_y\Bigr).
\]
In particular, \(u_x,(\frac12 u^2)_{xx}\in \Cont([0,T];L^\infty(\R))\) are smooth away from \(x = \pm x_0\). This suffices to show that \(u\) solves \eqref{QLS}, where both sides of the equation make sense as continuous functions on \([0,T]\times \R\).

\subsection{Conservation laws} By construction, the solution \(u\) conserves its support. To prove that it also conserves its mass, momentum, and energy it suffices to show that our solution has sufficient regularity to justify the integrations by parts.

For the conservation of mass we only require that that \(u\in \Cont([0,T]\times \R)\) is supported on \(\bar I\), and that \(u_x,(\tfrac12 u^2)_{xx}\in \Cont([0,T];L^\infty(\R))\).

For the conservation of momentum and energy, we also require that \(\bigl[\bar u(\frac12 u^2)_{xx}\bigr]_x\in \Cont([0,T];L^\infty(\R))\). However, this follows from the observation that for \(x = x(t,y)\) we have
\[
\Bigl[\bar u\bigl(\tfrac12 u^2\bigr)_{xx}\Bigr]_x = \frac U{|U|}\Bigl(4WW_y - \alpha W_y - \alpha_yW + W_{yy} + 2W^3\Bigr).
\]

\subsection{Uniqueness and continuity of the solution map} Finally, we consider the problem of uniqueness and continuity of the solution map in \(L^2\). This follows from a straightforward energy estimate:
\begin{lem}
Suppose that for some \(T>0\) and \(j = 1,2\) the functions \(u\sbrack{j}\in \Cont([0,T]\times \R)\) are solutions of \eqref{QLS} with initial data \(u\sbrack{j}(0) = u\sbrack{j}_0\) that are non-zero and smooth on \(I\), supported on \(\bar I\), and such that \(
u\sbrack j_x,\Bigl[\tfrac12\bigl(u\sbrack j\bigr)^2\Bigr]_{xx}\in \Cont([0,T]; L^\infty(\R))\). Then we have the estimate
\begin{equation}\label{LipschitzBound}
\sup\limits_{t\in[0,T]}\|u\sbrack{1} - u\sbrack{2}\|_{L^2}\lesssim \|u\sbrack{1}_0 - u\sbrack{2}_0\|_{L^2}.
\end{equation}
\end{lem}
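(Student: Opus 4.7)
The plan is to run a direct $L^2$ energy estimate on the difference $d := u^{(1)} - u^{(2)}$, extracting a symmetric algebraic cancellation that eliminates the principal term from the imaginary part. I would set $p := u^{(1)} + u^{(2)}$ and $v^{(j)} := \tfrac12 (u^{(j)})^2$. The hypotheses give that $d$ and $p$ are continuous and supported in $\bar I$, with $d_x$, $p_x$, and $v^{(j)}_{xx}$ in $L^\infty$. In particular $d \in C([0,T]; L^2)$ (by continuity and compact support), every product that will appear lies in $L^2$, and all integrations by parts have vanishing boundary contributions because $d$ and $p$ vanish at $\pm x_0$. Rewriting \eqref{QLS} in the equivalent form $iu^{(j)}_t = \bar u^{(j)} v^{(j)}_{xx} + \mu|u^{(j)}|^2 u^{(j)}$, the difference satisfies
\[
id_t = \bigl(\bar u^{(1)} v^{(1)}_{xx} - \bar u^{(2)} v^{(2)}_{xx}\bigr) + \mu\bigl(|u^{(1)}|^2 u^{(1)} - |u^{(2)}|^2 u^{(2)}\bigr),
\]
and I would compute $\partial_t \|d\|_{L^2}^2 = 2\Im \int \bar d\,(\mathrm{RHS})\,dx$.

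The decisive step is to split the principal term symmetrically. Writing $\bar u^{(1)} = \tfrac12(\bar p + \bar d)$, $\bar u^{(2)} = \tfrac12(\bar p - \bar d)$, and using the identity $v^{(1)} - v^{(2)} = \tfrac12 dp$, I obtain
\[
\bar u^{(1)} v^{(1)}_{xx} - \bar u^{(2)} v^{(2)}_{xx} = \tfrac14 \bar p (dp)_{xx} + \tfrac12 \bar d\bigl(v^{(1)} + v^{(2)}\bigr)_{xx}.
\]
Pairing the first piece with $\bar d$ and observing that $\bar d\bar p = \overline{dp}$, the corresponding integral collapses to $\tfrac14 \int \overline{dp}\,(dp)_{xx}\,dx$, which after one integration by parts equals $-\tfrac14 \int |(dp)_x|^2\,dx \in \R$; its imaginary part therefore vanishes. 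This is the only delicate point in the proof.

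Everything else will be routine. The second piece of the splitting contributes $\tfrac12 \int |d|^2 \Im[(v^{(1)} + v^{(2)})_{xx}]\,dx$, bounded by $\|(v^{(1)} + v^{(2)})_{xx}\|_{L^\infty}\|d\|_{L^2}^2$. The semilinear term obeys the pointwise Lipschitz bound $\bigl||u^{(1)}|^2 u^{(1)} - |u^{(2)}|^2 u^{(2)}\bigr| \lesssim (\|u^{(1)}\|_{L^\infty}^2 + \|u^{(2)}\|_{L^\infty}^2)|d|$, and so its pairing with $\bar d$ is controlled by $C\|d\|_{L^2}^2$. Combining yields
\[
\frac{d}{dt}\|d\|_{L^2}^2 \lesssim C(T)\,\|d\|_{L^2}^2
\]
with $C(T)$ depending only on $\sup_{[0,T]}\bigl(\|u^{(j)}\|_{L^\infty} + \|u^{(j)}_x\|_{L^\infty} + \|v^{(j)}_{xx}\|_{L^\infty}\bigr)$, and Gronwall's inequality gives \eqref{LipschitzBound}.

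The main obstacle will be the algebraic step above: a naive splitting such as $\bar d v^{(1)}_{xx} + \bar u^{(2)}(v^{(1)} - v^{(2)})_{xx}$ produces principal integrals involving $|d_x|^2\,\Im(\bar u^{(2)} p)$ that cannot be absorbed into $\|d\|_{L^2}^2$, since we have no $L^2$ control on $d_x$. The symmetric splitting turns the coefficient of $(dp)_{xx}$ into $\bar p$, which conjugates with $\bar d$ to produce a self-adjoint principal quadratic form $\int \overline{dp}(dp)_{xx}\,dx$ whose imaginary part is identically zero. Minor technical items, such as verifying that $t \mapsto \|d(t)\|_{L^2}^2$ is differentiable and that every integration by parts is justified, follow from the observation that $d_t = -i(F_1 + F_2)$ lies in $L^\infty_{t,x}$ with compact $x$-support, while $(dp)_{xx} = 2(v^{(1)}_{xx} - v^{(2)}_{xx}) \in L^\infty$ ensures that each integrand has precisely the regularity needed for the manipulations.
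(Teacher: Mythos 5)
Your proof is correct and amounts to the same $L^2$ energy estimate as the paper's, but the algebra you use to expose the key cancellation is genuinely different and worth noting. The paper first invokes conservation of mass (which holds under the lemma's hypotheses) to reduce $\partial_t\|d\|_{L^2}^2$ to $-2\Re\,\partial_t\langle u^{(1)},u^{(2)}\rangle$, then uses the integrated identity
\begin{equation*}
\bigl\langle iu^{(j)}_t,u^{(k)}\bigr\rangle = -\tfrac12\bigl\langle[(u^{(j)})^2]_x,(u^{(j)}u^{(k)})_x\bigr\rangle + \mu\bigl\langle(u^{(j)})^2,u^{(j)}u^{(k)}\bigr\rangle,
\end{equation*}
symmetrizes over $(j,k)$, and arrives at $\partial_t\|d\|_{L^2}^2 = -\tfrac12\Im\langle a_x,(d^2)_x\rangle + \mu\Im\langle a,d^2\rangle$ with $a = (u^{(1)})^2+(u^{(2)})^2 = 2(v^{(1)}+v^{(2)})$; one more integration by parts, using $a_{xx}\in L^\infty$, gives the Gronwall inequality. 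Your polarization identity $\bar u^{(1)}v^{(1)}_{xx}-\bar u^{(2)}v^{(2)}_{xx} = \tfrac14\bar p\,(dp)_{xx} + \tfrac12\bar d\,(v^{(1)}+v^{(2)})_{xx}$ reaches the same lower-order expression but makes the vanishing of the principal term explicit, via the observation that $\int\overline{dp}\,(dp)_{xx}\,dx = -\int|(dp)_x|^2\,dx$ is real, and does so without citing mass conservation as an input; both routes buy the same estimate, with yours marginally more self-contained. One small slip: the contribution of the second piece to $\partial_t\|d\|_{L^2}^2 = 2\Im\int\bar d\,(\mathrm{RHS})\,dx$ is $\Im\int\bar d^2(v^{(1)}+v^{(2)})_{xx}\,dx$ rather than $\tfrac12\int|d|^2\,\Im[(v^{(1)}+v^{(2)})_{xx}]\,dx$ (these are not equal in general, and the prefactor also differs), but since $|\bar d^2|=|d|^2$ its absolute value is still controlled by $\|(v^{(1)}+v^{(2)})_{xx}\|_{L^\infty}\|d\|_{L^2}^2$ and the conclusion is unaffected.
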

\begin{proof}
We first note that under our hypotheses on \(u\sbrack j\) we may justify the integration by parts
\[
\<iu\sbrack j_t,u\sbrack k\> = - \tfrac12\<[( u\sbrack j)^2]_x, (u\sbrack ju\sbrack k)_x\> + \mu\< ( u\sbrack j)^2,u\sbrack ju\sbrack k\>.
\]
Consequently, taking \(w = u\sbrack{1} - u\sbrack{2}\) and using the conservation of mass, we obtain the identity
\[
\frac d{dt} \|w\|_{L^2}^2 = - \tfrac12\Im\<a_x,(w^2)_x\> + \mu \Im\<a,w^2\>,
\]
where
\[
a = (u\sbrack1)^2 + (u\sbrack2)^2.
\]
Using that \(a_x\in \Cont([0,T]\times \R)\) and \(a_{xx}\in \Cont([0,T];L^\infty(\R))\), we may integrate by parts to obtain the estimate
\[
\frac d{dt}\|w\|_{L^2}^2 \lesssim \|a\|_{W^{2,\infty}}\|w\|_{L^2}^2,
\]
and the estimate \eqref{LipschitzBound} then follows from Gronwall's Inequality.
\end{proof}

This completes the proof of Theorem~\ref{thrm:Main}.\qed

\section{Stability of the compact breather}\label{sect:STABILITY}
In this section we prove Theorem~\ref{thrm:OrbitalStability}.  We explore the concentration compactness approach from Cazenave-Lions \cite{cazenave1982orbital} in the context of compacton stability. Rather than working with the variable \(u\) it will be useful to instead work with \(q := u^2\). By a slight abuse of notation, in this section we will denote the Hamiltonian and mass in terms of \(q\) instead of \(u\), i.e.
\[
H[q] = \tfrac 14 \int |q_x|^2\,dx - \tfrac12\int |q|^2\,dx,\qquad M[q] = \int |q|\,dx.
\]
For simplicity let us take $\omega =1$ and $\phi = \phi_1$ from \eqref{compacton}. We denote the orbit of the square of the compacton by
\[
E:= \left\{e^{2i\theta}\phi(\cdot - h)^2:\theta,h\in \R\right\}.
\]
The following result follows from the analysis of~\cite{MR4042218}:

\begin{prop}\label{prop:OldPaperConsequence}~
\begin{enumerate}
\item If \(q\in L^1\cap \dot H^1\) is a solution of the minimization problem
\begin{equation}\label{MinimizationProblem}
\min H[q]\quad\text{subject to}\quad M[q] = \sqrt{2\pi}
\end{equation}
then \(q\in E\).
\smallskip
\item Given any sequence \(\{q\sbrack{n}\}\subset L^1\cap \dot H^1\) such that \(M[q\sbrack{n}]\rightarrow\sqrt{2\pi}\) and \(H[q\sbrack{n}]\rightarrow H[\phi^2]\), there exists a sequence \(\{h\sbrack n\}\subseteq \R\) so that the sequence \(\{q\sbrack n(\cdot + h\sbrack n)\}\) is relatively compact in \(L^1\cap \dot H^1\).
\end{enumerate}
\end{prop}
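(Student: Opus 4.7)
The plan is to reduce the $q$-variational problem to the $u$-variational problem of Theorem~\ref{thrm:GHGM} via the substitution $q = u^2$, and then apply the concentration-compactness method in the style of Cazenave--Lions. The key formal identity is that if $u$ is measurable with $u^2 = q$, then $\int |u|^2 \, dx = \int |q| \, dx$ and $uu_x = \tfrac12 q_x$ in the distributional sense, so $M[u] = M[q]$ and $H[u] = H[q]$.

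For part (1): Since $q \in L^1 \cap \dot H^1$, the 1D embedding $\dot H^1 \hookrightarrow C^{0,1/2}_{\mathrm{loc}}$ together with $q \in L^1$ imply $q$ is continuous and vanishes at infinity. Writing $q = |q|e^{2i\theta}$ with a measurable phase, I define $u := |q|^{1/2} e^{i\theta}$ on $\{q \neq 0\}$ and $0$ elsewhere, choosing the phase lift to be continuous on each connected component of $\{q \neq 0\}$. A mollification argument (taking smooth approximants $q_\epsilon$ and constructing $u_\epsilon$ componentwise) establishes the distributional identity $uu_x = \tfrac12 q_x$. Thus $u$ lies in the admissible class of Theorem~\ref{thrm:GHGM} with $M[u] = \sqrt{2\pi}$ and $H[u] = H[q]$. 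If $q$ is a minimizer, so is $u$, and Theorem~\ref{thrm:GHGM} yields $u = e^{i\theta_0} \phi(\cdot - h)$ for some $\theta_0, h \in \R$, whence $q = e^{2i\theta_0}\phi(\cdot - h)^2 \in E$.

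For part (2): From (1), $I := \inf\{H[q] : M[q] = \sqrt{2\pi}\} = H[\phi^2] < 0$, and more generally $I_m := \inf\{H[q] : M[q] = m\}$ satisfies $I_m = c\, m^2$ for an explicit constant $c < 0$, as follows from the scaling $\phi_\omega^2 = \omega\, \phi_1^2$ implicit in Theorem~\ref{thrm:GHGM} (the mass of $\phi_\omega^2$ is linear in $\omega$ while the energy is quadratic in $\omega$). This yields strict subadditivity $I_{a+b} < I_a + I_b$ for $a, b > 0$. Given a minimizing sequence $q\sbrack{n}$, I apply Lions' concentration-compactness lemma to the densities $|q\sbrack{n}|$. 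Vanishing is ruled out because a uniform $\dot H^1$ bound (supplied by $H[q\sbrack{n}] \to I$ together with a Gagliardo--Nirenberg estimate) combined with vanishing would force $\|q\sbrack{n}\|_{L^2} \to 0$, giving $H[q\sbrack{n}] \geq 0$, contradicting $I < 0$. Dichotomy is ruled out by strict subadditivity in the standard manner. Compactness yields translations $h\sbrack{n}$ so that $q\sbrack{n}(\cdot + h\sbrack{n})$ is precompact in $L^1$; precompactness in $\dot H^1$ then follows because any weak $\dot H^1$ subsequential limit is a minimizer (by weak lower semicontinuity of $\|\cdot\|_{\dot H^1}$ and strong $L^1$ convergence of the mass constraint) and therefore saturates the energy bound, upgrading weak convergence to strong.

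The main obstacle I anticipate is the phase-lifting step in part (1): producing a measurable $u$ with $u^2 = q$ satisfying the distributional identity $uu_x = \tfrac12 q_x$, since at the zero set of $q$ branch choices for the square root can introduce sign jumps that would spuriously contribute to $u_x$ as a distribution. The saving observation is that only the combination $uu_x$ (never $u_x$ alone) appears in $H[u]$, and this combination is branch-independent and can be identified with $\tfrac12 q_x$ via smooth approximation, so the issue disappears at the level of the Hamiltonian.
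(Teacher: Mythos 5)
The paper's own proof is a two-sentence citation to~\cite{MR4042218} (the remark at the start of Section~3.2 there for item~(1), and an adaptation of their Theorem~2.7 for item~(2)); you instead attempt a self-contained reconstruction from Theorem~\ref{thrm:GHGM} and a Cazenave--Lions argument, which is a genuinely different and more explicit route. Your part~(2) sketch --- the scaling $I_m = m^2 I_1 < 0$, strict subadditivity, ruling out vanishing via a local embedding plus the negativity of $I$, ruling out dichotomy via subadditivity, and the $L^1 \to L^2 \to \dot H^1$ convergence upgrade --- is sound and essentially the standard argument.

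There is, however, a gap in your part~(1). Theorem~\ref{thrm:GHGM} as stated only characterizes the \emph{non-negative} minimizer, so the inference ``Theorem~\ref{thrm:GHGM} yields $u = e^{i\theta_0}\phi(\cdot-h)$'' does not follow directly; you still need to rule out complex-valued minimizers whose phase varies. The missing ingredient is a diamagnetic-type inequality in the $q$-variable: for $q\in L^1\cap\dot H^1$ one has $\||q|_x\|_{L^2}\le\|q_x\|_{L^2}$, hence $H[|q|]\le H[q]$ with $M[|q|]=M[q]$, so $|q|$ is also a minimizer and Theorem~\ref{thrm:GHGM} gives $|q|=\phi^2(\cdot-h)$; equality in the diamagnetic inequality then forces the phase of $q$ to be constant on $\{q\neq0\}$, which is a single interval, so $q\in E$. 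Once the argument is organized this way the whole detour through constructing a measurable square root $u$ --- and the attendant worry about giving distributional meaning to $uu_x$ across phase jumps --- becomes unnecessary: the minimization lives entirely in the $q$-variable, where $H$ and $M$ are manifestly well-defined and no branch-cut issues arise.
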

\begin{proof}
The proof of (1) follows from the remark at the beginning of Section 3.2 in \cite{MR4042218}.  The proof of (2) follows by a slight adaptation of the proof of Theorem 2.7 in \cite{MR4042218}.
\end{proof}

\begin{proof}[Proof Theorem~\ref{thrm:OrbitalStability}] We proceed by contradiction.  Namely, assume that our orbital stability result does not hold. Then there exists $\epsilon > 0$ and a sequence of initial data $\{u_0\sbrack{n}\}\subseteq S$ so that after applying Theorem~\ref{thrm:Main} we obtain corresponding solutions \(u\sbrack n\) defined on the time interval \([0,T_n]\), such that, with \(q_0\sbrack n = (u_0\sbrack n)^2\) and \(q\sbrack n = (u\sbrack n)^2\), we have $M[q_0\sbrack{n}] \to \sqrt{2 \pi}$ and $H [q_0\sbrack{n}] \to H[\phi^2]$, and times \(0\leq t_n\leq T_n\) so that
\[
\inf_{\psi \in E}\|q\sbrack{n}(t_n,\cdot) - \psi\|_{L^1\cap \dot H^1}\geq \epsilon.
\]
As $q\sbrack{n} (t_n,\cdot)$ is a minimizing sequence for the constrained minimization problem \eqref{MinimizationProblem}, and using the fact that the solution \(q\sbrack n\) conserves the mass and energy, we may apply Proposition~\ref{prop:OldPaperConsequence} to obtain a contradiction.
\end{proof}

\begin{appendix}

\section{Changes of variable}\label{app:Computations}
In this section we outline the computations leading to the equations \eqref{U-eqn} for \(U\) and \eqref{W-eqn} for \(W\).

We first observe that
\[
\left(|u|^2\right)_t = 2\Im\left(|u|^2\bar uu_x\right)_x.
\]
Differentiating the expression \eqref{Coords} and using the equation \eqref{GaugeFixing} to replace \(c_t\) we obtain
\begin{align*}
y_t(t,x) &= - 3\int_0^x \frac1{|u|} \Re\left(\frac{\bar uu_\zeta}{|u|}\right) \Im\left(\frac{\bar uu_\zeta}{|u|}\right)\,d\zeta  - \int_0^x \Im\left(\frac{\bar uu_\zeta}{|u|}\right)_\zeta\,d\zeta + c_t(t)\\
&=  - 3\int_{c(t)}^{y(t,x)} \alpha(t,\zeta)\beta(t,\zeta)\,d\zeta + \beta(t,c(t)) - \beta(t,y(t,x)) + c_t(t)\\
&= b(t,y(t,x)) - \beta(t,y(t,x)).
\end{align*}
Consequently,
\[
\partial_t\bigl[U(t,y(t,x))\bigr] = \bigl[U_t + (b - \beta)U_y\bigr](t,y(t,x)).
\]

The equation \eqref{U-eqn} then follows from the observation that
\begin{align*}
u_x(t,x) &= \left[\frac1{|U|}U_y\right](t,y(t,x)),\\
u_{xx}(t,x) &= \left[\frac1{|U|^2}\left(U_{yy} - \alpha U_y\right)\right](t,y(t,x)).
\end{align*}

To derive the equation \eqref{W-eqn} we first compute that
\[
i\left(\frac{\bar uu_x}{|u|}\right)_t = \frac{\bar u}{|u|}\left(\bar u(uu_x)_x + \mu|u|^2u\right)_x - \frac{u_x}{|u|}\left(u(\bar u\bar u_x)_x + \mu|u|^2\bar u\right) - i\frac{\bar uu_x}{|u|^3}\Im\left(|u|^3\frac{\bar uu_x}{|u|}\right)_x.
\]
Changing variables and using that \(U_y = WU\) we obtain
\begin{align*}
i(W_t + (b - \beta)W_y) &= \frac{\bar U}{|U|^2}\left(\frac{\bar U}{|U|}\left(\frac{U^2}{|U|}W\right)_y + \mu|U|^2U\right)_y - \frac{U}{|U|^2}W\left(\frac{U}{|U|}\left(\frac{\bar U^2}{|U|}\bar W\right)_y + \mu |U|^2\bar U\right)\\
&\quad - i\frac W{|U|^3}\left(|U|^3\beta\right)_y.
\end{align*}
The equation \eqref{W-eqn} then follows from the fact that \(U_y = UW\) and \(\left(|U|\right)_y = \alpha |U| \).

\section{Multilinear estimates}\label{app:Multilinear}
In this section we prove Proposition~\ref{prop:NonlinearEstimates}. We will rely on the following lemma, the proof of which may be found in e.g.~\cite[Chapter 3 and Appendix D]{MR1121019}:
\begin{lem}
For \(s\in \R\), \(\sigma\geq0\) and any integer \(k\geq 0\) we have the estimates
\begin{align}
\|\bT_{\partial_y^kf} g\|_{H^s} &\lesssim \|f\|_{L^\infty}\|\partial_y^kg\|_{H^s},\label{LH-1}\\
\|\bT_{\<D_y\>^\sigma f}g\|_{H^s} &\lesssim \|f\|_{L^2}\|\<D_y\>^{s+\sigma}g\|_{\BMO},\label{LH-2}\\
\|\bPi[f,g]\|_{H^\sigma} &\lesssim \|\<D_y\>^{-s}f\|_{\BMO}\|g\|_{H^{\sigma+s}}.\label{HH-1}
\end{align}
\end{lem}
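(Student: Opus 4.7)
The plan is to prove each of the three estimates using Bony's paraproduct framework together with Littlewood--Paley theory. The three main ingredients are: the Plancherel equivalence $\|h\|_{H^s}^2 \approx \sum_j 2^{2js}\|P_j h\|_{L^2}^2$; Bernstein's inequality, which in particular gives $\|\partial_y^k P_{\leq j} h\|_{L^\infty} \lesssim 2^{kj}\|h\|_{L^\infty}$ when $k \geq 0$ is an integer; and the Besov embedding $\|P_j h\|_{L^\infty}\lesssim \|h\|_{\BMO}$ arising from $\BMO \hookrightarrow \dot B^0_{\infty,\infty}$. The guiding geometric observation is that a low-high piece $P_{\leq j-4}f \cdot P_j g$ has Fourier support in an annulus of size $2^j$, while a high-high piece $P_j f \cdot P_k g$ with $|j-k|<4$ has Fourier support in a ball of radius $\lesssim 2^{\max(j,k)}$.

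For \eqref{LH-1}, I would apply Plancherel to the frequency-localized decomposition
\[
\|\bT_{\partial_y^k f}g\|_{H^s}^2 \approx \sum_{j\geq 4} 2^{2js}\bigl\|P_{\leq j-4}(\partial_y^k f)\cdot P_j g\bigr\|_{L^2}^2,
\]
estimate each product in $L^2$ by H\"older, and invoke Bernstein to pull the $k$ derivatives through the low-frequency projection: $\|P_{\leq j-4}(\partial_y^k f)\|_{L^\infty} \lesssim 2^{kj}\|f\|_{L^\infty}$. The resulting $\ell^2_j$-sum collapses to $\|f\|_{L^\infty}^2 \|g\|_{H^{s+k}}^2 \approx \|f\|_{L^\infty}^2\|\partial_y^k g\|_{H^s}^2$.

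For \eqref{LH-2}, the same localization reduces matters to controlling $\|P_{\leq j-4}(\<D_y\>^\sigma f)\cdot P_j g\|_{L^2}$. On the low-frequency factor, $\sigma\geq 0$ gives $\|\<D_y\>^\sigma P_{\leq j-4}f\|_{L^2}\lesssim 2^{j\sigma}\|f\|_{L^2}$; on the high-frequency factor, the $\BMO$--Besov embedding applied to $\<D_y\>^{s+\sigma}g$ yields $\|P_j g\|_{L^\infty}\lesssim 2^{-j(s+\sigma)}\|\<D_y\>^{s+\sigma}g\|_{\BMO}$. Plugging both into an $L^2\cdot L^\infty$ H\"older pairing gives a bound that is only uniform (rather than $\ell^2$-summable) in $j$, so to recover the missing orthogonality I would dualise against $h \in H^{-s}$ and rewrite
\[
\bigl\langle\bT_{\<D_y\>^\sigma f}g,h\bigr\rangle = \sum_{j\geq 4}\bigl\langle \<D_y\>^\sigma P_{\leq j-4}f,\, P_j g\cdot \tilde P_j h\bigr\rangle,
\]
where the insertion of the fattened projector $\tilde P_j$ is legitimate because each summand is frequency-localized at scale $2^j$. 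A Cauchy--Schwarz in $j$, combined with the square function characterization of $\BMO$ (equivalently, $\mathcal H^1$--$\BMO$ duality), then completes the estimate.

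The high-high estimate \eqref{HH-1} is handled analogously: writing $\bPi[f,g] = \sum_j P_j f\cdot \tilde P_j g$ and decomposing the output at dyadic scale $\ell$, the key point is that high-high pieces at scale $j$ can contribute to every output scale $\ell \leq j + C$, so that
\[
\|P_\ell\bPi[f,g]\|_{L^2}\leq \sum_{j \geq \ell - C} \bigl\|P_\ell\bigl(P_j f\cdot \tilde P_j g\bigr)\bigr\|_{L^2}.
\]
Using $\|P_j f\|_{L^\infty}\lesssim 2^{js}\|\<D_y\>^{-s}f\|_{\BMO}$ together with $\|\tilde P_j g\|_{L^2}\approx 2^{-j(s+\sigma)}\|\tilde P_j g\|_{H^{s+\sigma}}$ produces a summable bound for each fixed $\ell$ provided $\sigma\geq 0$; assembling the $\ell^2$-in-$\ell$ estimate again rests on genuine $\BMO$ duality rather than the crude embedding into $\dot B^0_{\infty,\infty}$. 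The main technical obstacle throughout is precisely this last step: passing from a pointwise-in-$j$ Besov-type bound to a genuine $\ell^2$-in-$j$ statement at the $\BMO$ endpoint, which requires invoking the square function / Carleson-measure characterization of $\BMO$ and Calder\'on--Zygmund paraproduct theory, exactly as developed in the reference \cite{MR1121019} cited by the authors.
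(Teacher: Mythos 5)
The paper does not actually prove this lemma; it is stated as a standard fact with a pointer to \cite[Chapter 3 and Appendix D]{MR1121019}, so there is no internal proof to compare against. Your argument for \eqref{LH-1} is correct and complete: the low--high pieces are almost orthogonal, Bernstein gives $\|P_{\leq j-4}\partial_y^kf\|_{L^\infty}\lesssim 2^{kj}\|f\|_{L^\infty}$, and since the paraproduct only sees output frequencies $\gtrsim 1$ the sum $\sum_{j\geq 4}2^{2j(s+k)}\|P_jg\|_{L^2}^2$ is indeed controlled by $\|\partial_y^kg\|_{H^s}^2$.

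For \eqref{LH-2} and \eqref{HH-1}, however, what you give is a road map rather than a proof. You correctly diagnose that the embedding $\BMO\hookrightarrow\dot B^0_{\infty,\infty}$ alone yields bounds that fail to sum in $j$, but the remedy you gesture at --- ``a Cauchy--Schwarz in $j$, combined with the square function characterization of $\BMO$'' --- is precisely where all of the content lies, and you never execute it. To close \eqref{LH-2} one reduces, after almost-orthogonality and absorbing the bounded multipliers $2^{-j\sigma}\<D_y\>^{\sigma}P_{\leq j-4}$ and $2^{j(s+\sigma)}P_j\<D_y\>^{-(s+\sigma)}$ into equivalent Littlewood--Paley blocks, to the inequality
\[
\sum_j\int|P_{\leq j}f|^2\,|P_jG|^2\,dx\ \lesssim\ \|f\|_{L^2}^2\,\|G\|_{\BMO}^2,\qquad G:=\<D_y\>^{s+\sigma}g,
\]
which is the Carleson embedding theorem for the $\BMO$ square function; the duality route you sketch needs the same input to function. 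For \eqref{HH-1} there is additionally a concrete error: your claim that the bound is ``summable for each fixed $\ell$ provided $\sigma\geq0$'' fails at $\sigma=0$, since $\sum_{j\geq\ell-C}\|\tilde P_jg\|_{H^s}$ is an $\ell^1$ sum of a sequence that is only $\ell^2$-summable. In fact for $\sigma>0$ the crude Besov embedding together with Young's inequality in the $(\ell-j)$ variable already closes the estimate and no $\BMO$ structure beyond $\dot B^0_{\infty,\infty}$ is required at all; the genuine $\BMO$ endpoint is entirely the $\sigma=0$ case, where one reduces (as for \eqref{LH-2}) to $\|\bPi[F,G]\|_{L^2}\lesssim\|F\|_{\BMO}\|G\|_{L^2}$ and again invokes Carleson embedding, or equivalently $T(1)$ for the $\BMO$-symbol paraproduct. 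So the strategic outline is reasonable, but the endpoint estimate that makes the lemma nontrivial is left as a black box, and the accounting of where the $\BMO$ structure is actually needed is off.
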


\begin{proof}[Proof of Proposition~\ref{prop:NonlinearEstimates}]~

\noindent\underline{Proof of \eqref{Symm-1}.} We decompose the product
\[
fg_y = \bT_fg_y + \bT_{g_y}f + \bPi[f,g_y],
\]
and then apply the estimates \eqref{LH-1}, \eqref{HH-1}.

\noindent\underline{Proof of \eqref{Symm-2}.} We first bound
\[
\|fg\|_{L^\infty}\leq \|f\|_{L^\infty}\|g\|_{L^\infty}.
\]
Next we decompose the product
\[
fg = \bT_fg + \bT_gf + \bPi[f,P_0g] + \bPi[f,P_{>0}g].
\]
For the low-high interactions we apply the estimate \eqref{LH-1} to bound
\begin{align*}
\|\partial_y\bT_fg\|_{H^{s-\frac12}}\lesssim \|f\|_{L^\infty}\|g_y\|_{H^{s-\frac12}},\\
\|\partial_y\bT_gf\|_{H^{s-\frac12}}\lesssim \|f_y\|_{H^{s-\frac12}}\|g\|_{L^\infty}.
\end{align*}
For the first high-high interaction, we observe that \(\bPi[f,P_0g] = P_{\leq 16}\bPi[P_{\leq 8}f,P_0g]\) so we may apply \eqref{HH-1} to bound
\begin{align*}
\|\partial_y\bPi[f,P_0g]\|_{H^{s-\frac12}} &\lesssim \|\bPi[P_{\leq 8}f_y,P_0g]\|_{L^2} + \|\bPi[P_{\leq 8}f,P_0g_y]\|_{L^2}\\
&\lesssim \|P_{\leq 8}f_y\|_{L^2}\|g\|_{L^\infty} + \|f\|_{L^\infty}\|P_0g_y\|_{L^2}\\
&\lesssim \|f_y\|_{H^{s-\frac12}}\|g\|_{L^\infty} + \|f\|_{L^\infty}\|g_y\|_{H^{s-\frac12}}.
\end{align*}
For the second high-high interaction, we again apply \eqref{HH-1} to bound
\[
\|\partial_y\bPi[f,P_{>0}g]\|_{H^{s-\frac12}}\lesssim \|f\|_{L^\infty}\|P_{>0}g\|_{H^{s+\frac12}}\lesssim \|f\|_{L^\infty}\|g_y\|_{H^{s-\frac12}}.
\]

\noindent\underline{Proof of \eqref{Asym-1}.} We simply apply the estimate \eqref{LH-1}.

\noindent\underline{Proof of \eqref{Asym-2}.} We decompose
\[
fg_y - \bT_fg_y = \bT_{g_y}f + \bPi[f,g_y].
\]
For the low-high interactions we apply the estimate \eqref{LH-2}, using that \(s\leq 1\), to obtain 
\[
\|\bT_{g_y}f\|_{H^s}\lesssim \|\<D_y\>f\|_{\BMO}\|g_y\|_{H^{s-1}}\lesssim \|f\|_{W^{1,\infty}}\|g\|_{H^s}.
\]
For the high-high interactions we apply the estimate \eqref{HH-1}, using that \(s\geq 0\), to obtain
\[
\|\bPi[f,g_y]\|_{H^s}\lesssim \|\<D_y\>f\|_{\BMO}\|g_y\|_{H^{s-1}}\lesssim \|f\|_{W^{1,\infty}}\|g\|_{H^s}.
\]

\noindent\underline{Proof of \eqref{Asym-3}.} Again we decompose
\[
fg_y = \bT_fg_y + \bT_{g_y}f + \bPi[P_0f,g_y] + \bPi[P_{>0}f,g_y].
\]
Applying \eqref{LH-1} we may bound
\[
\|\bT_fg_y\|_{H^{s-\frac12}}\lesssim \|f\|_{L^\infty}\|g_y\|_{H^{s-\frac12}},
\]
and applying \eqref{LH-2}, using that \(s\leq \frac12\), we may bound
\[
\|\bT_{g_y}f\|_{H^{s-\frac12}}\lesssim \|f\|_{\BMO}\|g_y\|_{H^{s-\frac12}} \lesssim \|f\|_{L^\infty}\|g_y\|_{H^{s-\frac12}}.
\]
For the first high-high interaction we use that \(\bPi[P_0f,g_y] = P_{\leq 16}\bPi[P_0f,P_{\leq 8}g_y]\) and apply \eqref{HH-1} to bound
\[
\|\bPi[P_0f,g_y]\|_{H^{s-\frac12}}\lesssim \|\bPi[P_0f,g_y]\|_{L^2}\lesssim \|f\|_{L^\infty}\|P_{\leq 8}g_y\|_{L^2}\lesssim \|f\|_{L^\infty}\|g_y\|_{H^{s-\frac12}}.
\]
For the second high-high interaction we apply Bernstein's inequality at the output frequency followed by the Cauchy-Schwarz inequality to obtain
\begin{align*}
\|\bPi[P_{>0}f,g_y]\|_{H^{s-\frac12}} &\lesssim \sum\limits_{j\geq 0}2^{sj}\|P_j\bPi[P_{>0}f,g_y]\|_{L^1}\\
&\lesssim \sum\limits_{|k-\ell|<4}\left(2^{\frac12k}\|P_kP_{>0}f\|_{L^2}\right)\left(2^{(s-\frac12)\ell}\|P_\ell g_y\|_{L^2}\right)\\
&\lesssim \|P_{>0}f\|_{H^{\frac12}}\|g_y\|_{H^{s-\frac12}}\lesssim \|f_y\|_{H^{-\frac12}}\|g_y\|_{H^{s-\frac12}},
\end{align*}
where we have used the fact that \(s>0\).

\noindent\underline{Proof of \eqref{Trilinear}.} Here it will be convenient to argue by duality. Consequently, we take a test function \(\phi\in H^{-s}\) and decompose by frequency to obtain
\[
\<fgh,\phi\> = \sum\limits_{j_1,j_2,j_3,j_4 \geq 0}\<P_{j_1}f\cdot P_{j_2}g\cdot P_{j_3}h,P_{j_4}\phi\>.
\]
By symmetry, we may assume that \(j_1\leq j_2\leq j_3\) and then observe that the sum vanishes unless \(|\max\{j_2,j_4\} - j_3|\leq 8\).

If \(j_4\geq j_2\) then \(|j_3 - j_4|\leq 8\). We estimate the two lowest frequency terms in \(L^\infty\) and the two highest frequency terms in \(L^2\) and then apply Bernstein's inequality to bound
\begin{align*}
|\<P_{j_1}f\cdot P_{j_2}g\cdot P_{j_3}h,P_{j_4}\phi\>| &\lesssim \|P_{j_1}f\|_{L^\infty}\|P_{j_2}g\|_{L^\infty}\|P_{j_3}h\|_{L^2}\|P_{j_4}\phi\|_{L^2}\\
&\lesssim 2^{\frac12(j_1 - j_3)}\|P_{j_1}f\|_{L^2}\|P_{j_2}g\|_{H^{\frac12}}\|P_{j_3}h\|_{H^{s+\frac12}}\|P_{j_4}\phi\|_{H^{-s}}.
\end{align*}
If \(j_4<j_2\) then \(|j_2 - j_3|\leq 8\) we proceed similarly, applying Bernstein's inequality to estimate
\begin{align*}
|\<P_{j_1}f\cdot P_{j_2}g\cdot P_{j_3}h,P_{j_4}\phi\>| &\lesssim \|P_{j_1}f\|_{L^\infty}\|P_{j_2}g\|_{L^2}\|P_{j_3}h\|_{L^2}\|P_{j_4}\phi\|_{L^\infty}\\
&\lesssim 2^{(s+\frac12)(j_4-j_3) + \frac12 (j_1 - j_2)}\|P_{j_1}f\|_{L^2}\|P_{j_2}g\|_{H^{\frac12}}\|P_{j_3}h\|_{H^{s+\frac12}}\|P_{j_4}\phi\|_{H^{-s}}.
\end{align*}

The estimate \eqref{Trilinear} then follows from several applications of the Cauchy-Schwarz inequality, first summing over the lowest frequency, then the second lowest frequency, and finally the highest two (comparable) frequencies.

\noindent\underline{Proof of \eqref{Trilinear-2}.} Again taking \(\phi\in H^{-s}\) and decomposing by frequency, it suffices to bound
\[
|\<P_{j_1}f\cdot P_{j_2}g\cdot h,P_{j_4}\phi\>|,
\]
where, by symmetry, we may assume that \(j_1\leq j_2\). We then divide into 3 cases:

\emph{Case 1: \(j_2\leq j_4\).} For the low frequency part of \(h\) we apply Bernstein's inequality to bound
\begin{align*}
|\<P_{j_1}f\cdot P_{j_2}g\cdot P_{\leq j_2}h,P_{j_4}\phi\>| &\lesssim \|P_{j_1}f\|_{L^\infty}\|P_{j_2}g\|_{L^2}\|P_{\leq j_2}h\|_{L^\infty}\|P_{j_3}\phi\|_{L^2}\\
&\lesssim 2^{\frac12(j_1 - j_2)}\|P_{j_1}f\|_{L^2}\|P_{j_2}g\|_{H^{s+\frac12}}\|P_{\leq j_2}h\|_{L^\infty}\|P_{j_4}\phi\|_{H^{-s}},
\end{align*}
where we have used that \(|j_2 - j_4|\leq 8\). For the high frequency part we decompose by frequency \(j_3>j_2\) and bound
\begin{align*}
|\<P_{j_1}f\cdot P_{j_2}g\cdot P_{j_3}h,P_{j_4}\phi\>|&\lesssim \|P_{j_1}f\|_{L^\infty}\|P_{j_2}g\|_{L^\infty}\|P_{j_3}h\|_{L^2}\|P_{j_4}\phi\|_{L^2}\\
&\lesssim 2^{\frac12(j_1 - j_3) + \frac12(j_2 - j_3)}\|P_{j_1}f\|_{L^2}\|P_{j_2}g\|_{L^2}\|P_{j_3}h_y\|_{H^s}\|P_{j_4}\phi\|_{H^{-s}},
\end{align*}
where we have used that \(j_3>0\) and \(|j_3 - j_4|\leq 8\).

\emph{Case 2: \(j_1\leq j_4<j_2\).} Here we proceed similarly, bounding the low frequency part of \(h\) by
\begin{align*}
|\<P_{j_1}f\cdot P_{j_2}g\cdot P_{\leq j_4}h,P_{j_4}\phi\>| &\lesssim \|P_{j_1}f\|_{L^\infty}\|P_{j_2}g\|_{L^2}\|P_{\leq j_4}h\|_{L^\infty}\|P_{j_4}\phi\|_{L^2}\\
&\lesssim 2^{\frac12(j_1 - j_2)}\|P_{j_1}f\|_{L^2}\|P_{j_2}g\|_{H^{s+\frac12}}\|P_{\leq j_4}h\|_{L^\infty}\|P_{j_4}\phi\|_{H^{-s}},
\end{align*}
and the high frequency part, where \(j_3>j_4\), by
\begin{align*}
|\<P_{j_1}f\cdot P_{j_2}g\cdot P_{j_3}h,P_{j_4}\phi\>|&\lesssim \|P_{j_1}f\|_{L^\infty}\|P_{j_2}g\|_{L^2}\|P_{j_3}h\|_{L^2}\|P_{j_4}\phi\|_{L^\infty}\\
&\lesssim 2^{\frac12(j_1-j_3) + (s+\frac12)(j_4 - j_3)}\|P_{j_1}f\|_{L^2}\|P_{j_2}g\|_{L^2}\|P_{j_3}h_y\|_{H^s}\|P_{j_4}\phi\|_{H^{-s}},
\end{align*}

\emph{Case 3: \(j_4<j_1\).} Proceeding as in the previous two cases we bound
\begin{align*}
|\<P_{j_1}f\cdot P_{j_2}g\cdot P_{\leq j_1}h,P_{j_4}\phi\>| &\lesssim \|P_{j_1}f\|_{L^2}\|P_{j_2}g\|_{L^2}\|P_{\leq j_1}h\|_{L^\infty}\|P_{j_4}\phi\|_{L^\infty}\\
&\lesssim 2^{(s+\frac12)(j_4 - j_2)}\|P_{j_1}f\|_{L^2}\|P_{j_2}g\|_{H^{s+\frac12}}\|P_{\leq j_4}h\|_{L^\infty}\|P_{j_4}\phi\|_{H^{-s}},
\end{align*}
and for \(j_3>j_1\)
\begin{align*}
|\<P_{j_1}f\cdot P_{j_2}g\cdot P_{j_3}h,P_{j_4}\phi\>|&\lesssim \|P_{j_1}f\|_{L^\infty}\|P_{j_2}g\|_{L^2}\|P_{j_3}h\|_{L^2}\|P_{j_4}\phi\|_{L^\infty}\\
&\lesssim 2^{(s + \frac12)(j_4 - j_3) + \frac12(j_1 - j_3)}\|P_{j_1}f\|_{L^2}\|P_{j_2}g\|_{L^2}\|P_{j_3}h_y\|_{H^s}\|P_{j_4}\phi\|_{H^{-s}}.
\end{align*}

\noindent\underline{Proof of \eqref{Comm}.} We decompose the commutator as
\begin{align*}
[\<D_y\>^s,f]g_y &= [\<D_y\>^s,\bT_f]g_y + \Bigl(\<D_y\>^s\bPi[P_0f,g_y] - \bPi[P_0f,\<D_y\>^sg_y]\Bigr)\\
&\quad + \<D_y\>^s\bT_{g_y}f - \bT_{\<D_y\>^sg_y}f + \<D_y\>^s\bPi[P_{>0}f,g_y] - \bPi[P_{>0}f,\<D_y\>^sg_y].
\end{align*}
For the first term we write
\begin{align*}
[\<D_y\>^s,P_{\leq j-4}f]P_jg_y &= [\<D_y\>^sP_{\leq j+4},P_{\leq j-4}f]P_jg_y\\
&= \int K_j(y-z) \bigl(P_{\leq j-4}f(y) - P_{\leq j-4}f(z)\bigr)P_jg_z(z)\,dz,
\end{align*}
where \(K_j\) is the kernel of \(\<D_y\>^sP_{\leq j+4}\). We may then apply Young's inequality to bound
\[
\|[\<D_y\>^s,P_{\leq j-4}f]P_jg_y\|_{L^2} \lesssim \|K_j(y) y \|_{L^1}\|P_{\leq j-4}f_y\|_{L^\infty}\|P_jg_y\|_{L^2}\lesssim \|f_y\|_{L^\infty}\|P_jg\|_{H^s},
\]
where we have used that
\[
\|K_j(y)y\|_{L^1}\lesssim 2^{(s-1)j}.
\]
The second term is bounded similarly, using that
\[
\<D_y\>^s\bPi[P_0f,g_y] - \bPi[P_0f,\<D_y\>^sg_y] = \<D_y\>^sP_{\leq 16}\bPi[P_0f,P_{\leq 8}g_y] - \bPi[P_0f,\<D_y\>^sP_{\leq 16}P_{\leq 8}g_y],
\]
to obtain
\[
\|\<D_y\>^s\bPi[P_0f,g_y] - \bPi[P_0f,\<D_y\>^sg_y]\|_{L^2}\lesssim \|f_y\|_{L^\infty}\|P_{\leq 8}g\|_{L^2}\lesssim \|f_y\|_{L^\infty}\|g\|_{H^s}.
\]

For the remaining terms, we first apply the estimate \eqref{LH-2}, with the fact that \(s\leq 1\), to obtain
\[
\|\<D_y\>^s\bT_{g_y}f\|_{L^2} + 
\|\bT_{\<D_y\>^sg_y}f\|_{L^2}\lesssim \|f_y\|_{L^\infty}\|g\|_{H^s}.
\]
Next, we apply the estimate \eqref{HH-1} to bound
\[
\|\bPi[P_{>0}f,\<D_y\>^sg_y]\|_{L^2} \lesssim \|f_y\|_{L^\infty}\|g\|_{H^s}.
\]
For the remaining term, if \(s>0\), we again apply the estimate \eqref{HH-1} to bound
\[
\|\<D_y\>^s\bPi[P_{>0}f,g_y]\|_{L^2} \lesssim \|f_y\|_{L^\infty}\|g\|_{H^s},
\]
whereas, if \(-\frac12<s<0\), we argue as in the proof of \eqref{Asym-3} to bound
\[
\|\<D_y\>^s\bPi[P_{>0}f,g_y]\|_{L^2}\lesssim \|f_{yy}\|_{H^{-\frac12}}\|g\|_{H^s}.
\]

\noindent\underline{Proof of \eqref{Comm-2}.} We first observe that
\[
[P_{\leq j},f]P_{\leq j-4}g_y = [P_{\leq j},P_{>j-4}f]P_{\leq j-4}g_y,
\]
and hence we may bound
\[
\|[P_{\leq j},f]P_{\leq j-4}g_y\|_{L^\infty}\lesssim \|P_{> j-4}f\|_{L^\infty}\|P_{\leq j-4}g_y\|_{L^\infty}\lesssim \|f_y\|_{L^\infty}\|g\|_{L^\infty}.
\]
For the remaining term, we proceed as in the proof of \eqref{Comm} and write
\[
[P_{\leq j},f]P_{>j-4}g_y = \int K_j(y-z) \bigl(f(y) - f(z)\bigr)P_{>j-4}g_z(z)\,dz,
\]
where \(K_j\) is the kernel of \(P_{\leq j}\). The estimate then follows from Young's inequality.
\end{proof}

\end{appendix}

\bibliographystyle{abbrv}
\bibliography{NLS}
\bigskip
\end{document}